\documentclass[leqno,11pt]{amsart}

\usepackage{bbm}
\usepackage[rgb,dvipsnames]{xcolor}
\usepackage{tikz} 
\usetikzlibrary{decorations.text}
\usetikzlibrary{arrows,arrows.meta,hobby}
\usetikzlibrary{shapes.misc, positioning}

\usepackage{tikz-cd}

\usetikzlibrary{cd}

\usepackage{setspace}
\usepackage[english]{babel}
\usepackage{bussproofs}
\usepackage[utf8]{inputenc}
\usepackage{csquotes}
\usepackage{mathtools}
\usepackage{mathrsfs}
\usepackage{latexsym}
\usepackage{enumitem}
\usepackage{amsthm}
\usepackage{amssymb}
\DeclareMathAlphabet{\mathpzc}{OT1}{pzc}{m}{it}

\usepackage{bbm}



\addtolength{\textwidth}{3.5cm}
\addtolength{\oddsidemargin}{-1.75cm}
\addtolength{\evensidemargin}{-1.75cm}


\newtheorem{theorem}{Theorem}[section]
\newtheorem*{theorem*}{Theorem}

\newtheorem{lemma}[theorem]{Lemma}
\newtheorem{proposition}[theorem]{Proposition}
\newtheorem{corollary}[theorem]{Corollary}

\newtheorem{fact}[theorem]{Fact}
\newtheorem{claim}[theorem]{Claim}
\theoremstyle{definition}
\newtheorem{definition}[theorem]{Definition}

\theoremstyle{remark}
\newtheorem{remark}{Remark}

\newtheorem{question}{Question}

\def\hook{\upharpoonright}
\def\forces{\Vdash}

\def\Null{\mathcal N}
\def\ZFC{\mathsf{ZFC}}

\def\PFA{\mathsf{PFA}}
\def\MA{\mathsf{MA}}

\def\BA{\mathsf{BA}}

\def\baire{\omega^\omega}

\def\cantor{2^\omega}

\def\mfb{\mathfrak b}

\def\CH {\mathsf{CH}}
\def\Q{\mathbb Q}
\def\P{\mathbb P}

\def\cc{2^{\aleph_0}}

\def\mfp{\mathfrak{p}}

\def\add{{\rm add}}

\title{Variants of Baumgartner's Axiom for Lipschitz Functions on Baire and Cantor Space}

\author[Switzer]{Corey Bacal Switzer}
\address[C.~B.~Switzer]{Institut f\"{u}r Mathematik, Kurt G\"odel Research Center, Universit\"{a}t Wien, Kolingasse 14-16, 1090 Wien, AUSTRIA}
\email{corey.bacal.switzer@univie.ac.at}

\thanks{\emph{Acknowledgments:} This research was funded in whole or in part by the Austrian Science Fund (FWF) through the following grant: 10.55776/ESP548.}

\begin{document}

\begin{abstract}
   We consider several variants of Baumgartner's axiom for $\aleph_1$-dense sets defined on the Baire and Cantor spaces in terms of Lipschitz functions with respect to the usual metric. A variation of Baumgartner's original argument shows that these variants are consistent. However, unlike in the case of the classical $\BA$, we are able to give many applications for which the corresponding fact for linear orders is open. In particular we show that there are provable implications from the $\baire$ variants to the $\cantor$ variants and that some of these principles imply all the cardinals in the Cicho\'{n}'s diagram are large. We also show, similar to (but not the same as) $\BA$, that none of the Lipschitz variants follow from a large fragment of $\MA$. 
\end{abstract}

\maketitle

\section{Introduction}
The motivation for this work comes from {\em Baumgartner's axiom}, denoted hereafter as $\BA$, which states that all $\aleph_1$-dense sets of reals are isomorphic\footnote{See below for precise definitions.}. It was proved consistent by Baumgartner in \cite{Baum73} and again by different means in \cite{BaumPFA}. This statement is itself a generalization of Cantor's celebrated result on the $\omega$-categoricity of countable dense linear orders. Baumgartner's axiom has a natural topological phrasing which allows one to state parametrized principles $\BA (X)$ for arbitrary topological spaces $X$ ($\BA$ itself is $\BA (\mathbb R)$), see below for precise definitions. These were investigated in \cite{Stepranswatson87}, \cite{BaldwinBeaudoin89} among other places, see also \cite{weakBA} for a different perspective and \cite[Section 3.5]{QsCDH} for related open problems. A curious fact is that the one dimensional case - namely $\BA (\mathbb R)$ -  behaves very differently from both the zero dimensional cases $\BA (\baire)$ and $\BA (\cantor)$ and the higher dimensional cases of $\BA (\mathbb R^n)$ for $n > 1$. In particular, while the non-one dimensional cases all follow from Martin's Axiom (and actually much less see Fact \ref{BAfacts} below), the one dimensional case does not (see \cite{AvrahamShelah81}). The most straightforward explanation for this is that the linear order structure on $\mathbb R$ induces additional constraints on the topology which have no analogue in the non-one dimensional cases. The purpose of this paper is consider similar axioms which appeal to additional, natural combinatorial structure available on other well studied Polish spaces. 


In order to further the discussion we recall the following definition from \cite{Stepranswatson87}.
\begin{definition} \label{topBAdef}
    Let $\kappa$ be a cardinal and $X$ a topological space. The axiom $\BA_\kappa (X)$ states that for all $\kappa$-dense $A, B \subseteq X$ there is an autohomeomorphism $h:X \to X$ so that $h''A = B$. When $\kappa = \aleph_1$ we often drop the subscript. 
\end{definition}
Here recall that a set $A \subseteq  X$ is $\kappa$-dense if its intersection with every non-empty open set has size $\kappa$.


In this paper we consider analogues of $\BA (X)$ for $X$ either $\baire$ or $\cantor$ with additional the additional restriction that the mappings are required to be Lipschitz (and not just arbitrary continuous functions) (but the precise statement is weakened overall). We dub these axioms $\BA_{\rm} (\baire)$ and $\BA_{\rm Lip}(\cantor)$ (see Definition \ref{BAlipDef1} for the precise definition). By mimicking the proof of the consistency of $\BA$ from \cite{Baum73} we show the following. 

\begin{theorem}[See Theorem \ref{consistency} below] \label{consistencyintro}
    Both $\BA_{\rm Lip} (\baire)$ and $\BA_{\rm Lip} (2^\omega)$ can be forced by ccc forcing over a model of $\CH$. They also follow from $\PFA$. 
\end{theorem}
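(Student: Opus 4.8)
The plan is to mimic Baumgartner's original consistency proof for $\BA$, adapting it to the Lipschitz setting on $\baire$ and $\cantor$. I would define a forcing notion whose conditions are finite approximations to the desired Lipschitz autohomeomorphism, and then iterate (or use a single step with a suitable chain condition) to handle all pairs of $\aleph_1$-dense sets. Since the precise definition of $\BA_{\rm Lip}$ is deferred to Definition \ref{BAlipDef1}, I expect the target statement to assert that for $\aleph_1$-dense $A, B$ there is a Lipschitz homeomorphism (perhaps with Lipschitz inverse, i.e.\ bi-Lipschitz) carrying $A$ to $B$, and the forcing conditions should be finite partial bi-Lipschitz maps between finite subsets of $A$ and $B$ respecting the ultrametric structure. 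The key combinatorial advantage on $\baire$ and $\cantor$ is that these spaces are ultrametric and zero-dimensional, so a Lipschitz condition with constant $1$ essentially says the map preserves the longest common prefix / the level at which two points first differ, which is a purely combinatorial (tree-level) constraint rather than an analytic one.

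\medskip

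For the ccc-over-$\CH$ half, I would first prove a single-step lemma: given two fixed $\aleph_1$-dense sets $A, B \subseteq \baire$ (resp.\ $\cantor$), the poset $\P_{A,B}$ of finite partial order-and-level-preserving injections is ccc under $\CH$, and any sufficiently generic filter yields a Lipschitz isomorphism $A \to B$ extending to an autohomeomorphism of the whole space. The ccc verification is the crux: one takes an uncountable antichain, applies a $\Delta$-system lemma to the finite domains (viewed as finite subtrees of $\omega^{<\omega}$ or $2^{<\omega}$), and then uses $\CH$ together with a pressing-down / Fodor-style or amalgamation argument to show that two conditions with the same root behave compatibly. Here the Lipschitz/ultrametric rigidity actually helps, because the split-levels are discrete combinatorial data and amalgamating two finite partial maps reduces to checking finitely many branching patterns rather than preserving a dense linear order. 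After establishing the single-step lemma, I would iterate with finite support over length $\omega_2$ (or $\aleph_2$ with a bookkeeping function enumerating all pairs $(A,B)$ appearing in intermediate models, using that $\CH$ is preserved at each stage so the relevant posets stay ccc), and a standard finite-support ccc iteration argument gives the final model of $\BA_{\rm Lip}$.

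\medskip

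For the $\PFA$ half, I would show the single-step poset $\P_{A,B}$ is \emph{proper} (in $\ZFC$, without assuming $\CH$), and then invoke $\PFA$ directly: given any $\aleph_1$-dense $A, B$ in the ground model, the collection of dense sets $D_a = \{p : a \in \mathrm{dom}(p)\}$ and $E_b = \{p : b \in \mathrm{ran}(p)\}$ for $a \in A$, $b \in B$ has size $\aleph_1$, so a filter meeting all of them produces the required Lipschitz homeomorphism in one shot. The properness argument would follow Baumgartner's $\PFA$-style proof from \cite{BaumPFA}, showing that for a suitable countable elementary submodel $N$ any condition in $N$ has an $(N,\P_{A,B})$-generic extension, using that the $\aleph_1$-density of $A$ and $B$ lets one always find continuum-many candidate images inside each basic clopen set to extend the partial map generically.

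\medskip

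\textbf{The main obstacle} I anticipate is verifying the chain condition (ccc under $\CH$, and properness in general) for the amalgamation of two finite partial Lipschitz maps. The subtlety is that preserving the Lipschitz constant means preserving the exact split-levels: if $a_1, a_2$ first differ at coordinate $n$, their images must first differ at coordinate $n$ as well. When amalgamating two conditions sharing a common root via a $\Delta$-system, I must ensure that the relative branching structure of the ``new'' points in one condition is compatible with the images forced by the other, and this is where the argument genuinely differs from the linear-order case in \cite{Baum73}. I expect this to require a careful density/genericity argument exploiting $\aleph_1$-density to realize any prescribed finite splitting pattern, and it is precisely this step that, as the abstract notes, behaves differently for the Lipschitz variants than for classical $\BA$.
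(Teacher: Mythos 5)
Your overall architecture matches the paper's: a single-pair ccc lemma under $\CH$ plus a bookkeeping finite support iteration, with $\PFA$ handled separately. But the core of your proposal has a fatal flaw: you guessed the wrong target statement, and the statement you propose to force is provably false. You take the conditions to be finite partial maps that preserve the \emph{exact} split levels (``if $a_1, a_2$ first differ at coordinate $n$, their images must first differ at coordinate $n$''), aiming at a (bi-)Lipschitz homeomorphism carrying $A$ onto $B$. A map preserving exact split levels is a partial \emph{isometry}, not merely a Lipschitz map: on these ultrametric spaces, Lipschitz constant $1$ only says split levels cannot \emph{decrease}, and the inverse of a Lipschitz map need not be Lipschitz --- the paper's footnote inside the proof of Lemma \ref{consistencylemma} flags exactly this asymmetry as the key difference from the linear-order case. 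Now Proposition \ref{counterexample} constructs, in $\ZFC$, two $\aleph_1$-dense sets such that no uncountable subset of either maps isometrically into the other, and (as the paper remarks) this remains true in every forcing extension preserving $\omega_1$. Hence for such a pair no ccc or proper forcing can add the map your single-step lemma promises; the lemma is irreparably false as stated. For Cantor space matters are even worse: by Lemma \ref{nowhereisometriesareneversurjective} every surjective Lipschitz self-map of $\cantor$ is an isometry, so even the ``Lipschitz bijection of $A$ onto $B$'' version is inconsistent there. This is precisely why Definition \ref{BAlipDef1} asks only for a Lipschitz \emph{injection} of $A$ into $B$, why the strong Baire axiom asks for a Lipschitz surjection (not a homeomorphism), and why the strong Cantor axiom is weakened further to countably many Lipschitz maps covering $B$.

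There is a second gap even after one corrects the conditions to one-sided finite partial Lipschitz injections (the paper's $\P'$): you assert that this full poset is ccc under $\CH$ via a $\Delta$-system/Fodor amalgamation, but that is exactly the step at which Baumgartner's method requires more. The paper never claims $\P'$ is ccc; instead (Lemma \ref{consistencylemma}) it uses $\CH$ to enumerate countable subsets of $\P'$ together with their closures $c_\alpha$, recursively partitions $A$ into countable dense sets $A_\alpha$ whose elements are chosen to dodge the countable trap sets $X_{i,p,\beta,(s,t)}$, and proves ccc only for the suborder $\P$ of conditions respecting this partition (with, again reflecting the Lipschitz asymmetry, no constraint needed on the $B$-side partition). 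Your sketch contains no analogue of this suborder construction, which is the actual crux. Finally, for the $\PFA$ part the paper does not prove properness of the single-step poset in $\ZFC$ as you propose; it uses the ``$\CH$ trick'' of \cite{BaumPFA}: first force $\CH$ by a countably closed forcing (adding no reals, so $A$ and $B$ stay $\aleph_1$-dense), then apply the ccc lemma, and apply $\PFA$ to the resulting proper two-step iteration together with the $\aleph_1$ many dense sets. Whether the poset is proper without first forcing $\CH$ is not addressed in the paper and would require a genuinely new argument.
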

Currently there are no known implications between axioms of the form $\BA (X)$ for different $X$ (barring trivial ones). However in the case of the Lipschitz functions we show the following.
\begin{theorem}
    $\BA_{\rm Lip} (\baire)$ implies $\BA_{\rm Lip} (\cantor)$. \label{implicationintro}
\end{theorem}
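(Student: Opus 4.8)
The plan is to exploit that $\cantor$ is a \emph{Lipschitz retract} of $\baire$ in a particularly rigid way: I transfer an instance of the Cantor principle up to $\baire$, solve it there with $\BA_{\rm Lip}(\baire)$, and push the solution back down. Two canonical maps drive everything. First, the inclusion $\iota\colon\cantor\to\baire$ induced by $2\subseteq\omega$ is an \emph{isometry}: for $c,c'\in\cantor$ the least coordinate of disagreement is the same computed in either space, so $d_{\baire}(\iota c,\iota c')=d_{\cantor}(c,c')$. Second, the coordinatewise capping map $r\colon\baire\to\cantor$, $r(x)(n)=\min(x(n),1)$, is $1$-Lipschitz and satisfies $r\circ\iota=\mathrm{id}_{\cantor}$. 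The point of recording these two facts is that Lipschitz constants transfer \emph{for free}: if $g\colon\baire\to\baire$ is $C$-Lipschitz, then $r\circ g\circ\iota\colon\cantor\to\cantor$ is again $C$-Lipschitz, since $\iota$ neither expands nor contracts and $r$ only contracts. Thus the difficulty will lie in arranging density and the correct image behaviour, not in the Lipschitz estimate.

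Given an instance of $\BA_{\rm Lip}(\cantor)$, i.e.\ $\aleph_1$-dense $A,B\subseteq\cantor$, I would first inflate it to an instance in $\baire$ using the fibres of $r$. One checks that $r^{-1}[A]$ is dense in $\baire$ (below any basic clopen $[s]$ choose $c\in A\cap[r(s)]$, nonempty as $A$ is dense in $\cantor$, and then a point of the fibre $r^{-1}(c)$ lying in $[s]$), and likewise for $B$. A routine bookkeeping over the countably many basic clopen sets then produces $\aleph_1$-dense sets $A',B'\subseteq\baire$ with $\iota[A]\subseteq A'\subseteq r^{-1}[A]$ and $\iota[B]\subseteq B'\subseteq r^{-1}[B]$. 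Applying $\BA_{\rm Lip}(\baire)$ to the pair $(A',B')$ yields the witnessing Lipschitz map $g$ with $g[A']=B'$.

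Now set $h:=r\circ g\circ\iota$. By the first paragraph $h$ is Lipschitz with the same constant as $g$, and it maps $\cantor$ into $\cantor$. Moreover the containments $\iota[A]\subseteq A'$ and $r[B']\subseteq B$ give, for every $c\in A$, that $\iota(c)\in A'$, hence $g(\iota(c))\in B'$, hence $h(c)=r(g(\iota(c)))\in r[B']\subseteq B$; that is, $h[A]\subseteq B$. Running the same construction with the witness for $(B',A')$ (or with $g^{-1}$, if the $\baire$-witness is a bi-Lipschitz bijection) produces a Lipschitz map the other way, and together these should yield the conclusion of $\BA_{\rm Lip}(\cantor)$ in its weakened form for the pair $(A,B)$.

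The main obstacle, and the reason the statement must be the weakened one, is that the $\baire$-witness $g$ has no reason to respect the copy $\iota[\cantor]$: this copy is closed and nowhere dense in $\baire$ and is emphatically not invariant under (even bi-Lipschitz) autohomeomorphisms of $\baire$, so $r\circ g\circ\iota$ need not be injective or surjective even when $g$ is a bijection. At root this is the incompatibility that $\baire$ (branching $\omega$) and $\cantor$ (branching $2$) admit no bi-Lipschitz correspondence on any set dense in $\baire$, so one cannot hope to conjugate a genuine Cantor-space isomorphism out of a Baire-space one. Consequently the final step should not attempt to manufacture a true bijection; the careful choice of $A',B'$ \emph{inside the fibres of $r$} is exactly what lets the retraction descend to the containment $h[A]\subseteq B$, and it is this containment (in both directions) that the weakened $\BA_{\rm Lip}$ records. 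I expect verifying that this weakened conclusion is precisely what Definition~\ref{BAlipDef1} demands, and that the two symmetric maps fit together as required there, to be the only remaining point needing genuine care.
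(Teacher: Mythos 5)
Your overall skeleton---inflate the Cantor instance to Baire space, apply $\BA_{\rm Lip}(\baire)$, and push the witness back down along a $1$-Lipschitz collapse---is the same as the paper's, which uses the parity map $\pi(x)(k)=x(k)\bmod 2$ where you use the cap $r(x)(n)=\min(x(n),1)$. But there is a genuine gap at the decisive point: Definition~\ref{BAlipDef1} demands a Lipschitz $h\colon\cantor\to\cantor$ that \emph{injects} $A$ into $B$, and your $h=r\circ g\circ\iota$ need not be injective on $A$. You concede this yourself and then assert that the weakened axiom only records the containment $h[A]\subseteq B$ (in both directions); that is a misreading of the axiom, and two non-injective maps in opposite directions do not combine to give the required injection. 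The failure is forced by your own requirements on $B'$: since $B'$ is $\aleph_1$-dense in $\baire$, it contains points $x$ outside the nowhere dense copy $\iota[\cantor]$, and for any such $x$ both $x$ and $\iota(r(x))\in\iota[B]\subseteq B'$ lie in the same fibre of $r$. So the map $g$---which, note, is only guaranteed by $\BA_{\rm Lip}(\baire)$ to inject $A'$ into $B'$; your claim $g[A']=B'$ is the conclusion of the strong axiom $\overline{\BA}_{\rm Lip}(\baire)$, not the one you are allowed to use---may send distinct points $\iota(c_1)\neq\iota(c_2)$ of $\iota[A]$ to distinct points of $B'$ in a single fibre, which $r$ then identifies.

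The missing idea, which is the actual content of the paper's proof, is to build $B'$ so that the collapse restricted to $B'$ is a \emph{bijection} onto $B$: exactly one point of $B'$ in the fibre over each $b\in B$, with $B'$ still $\aleph_1$-dense in $\baire$ (this forces you to drop the requirement $\iota[B]\subseteq B'$). Then $h\hook A$ is a composition of injections ($\iota$, then $g\hook A'$, then $r\hook B'$), and the proof closes. In the paper this transversal is produced for the parity map by a recursion of length $\omega_1$ choosing lexicographically least extensions of minimal order type, with $\aleph_1$-density verified by a Fodor argument. For your map $r$ one can argue more simply: enumerate $\omega^{<\omega}=\{s_n\colon n<\omega\}$, recursively choose pairwise disjoint $\aleph_1$-sized sets $C_n\subseteq B\cap[r(s_n)]$ (possible since each $B\cap[r(s_n)]$ has size $\aleph_1$ and there are countably many requirements), send each $b\in C_n$ to $s_n{}^\frown b\hook[|s_n|,\omega)$, which lies in $[s_n]\cap r^{-1}(b)$, and send each remaining $b\in B$ to $\iota(b)$. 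The resulting $B'$ is $\aleph_1$-dense, lies in $r^{-1}[B]$, and $r\hook B'$ is a bijection onto $B$; with this replacement your argument becomes a correct proof, essentially the paper's.
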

Todor\v{c}evi\'c showed in \cite{Todorcevic89} that $\BA$ implies that $\mfb > \aleph_1$ where $\mfb$ is the standard bounding number. Similarly we show the following by a very different argument.
\begin{theorem}
  $\BA_{\rm Lip} (2^\omega)$ (and hence by Theorem \ref{implicationintro}, $\BA_{\rm Lip} (\baire)$ as well) implies that $\mfb > \aleph_1$. \label{bintro}
\end{theorem}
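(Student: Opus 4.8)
The plan is to argue by contraposition: assuming $\mfb = \aleph_1$, I would produce two $\aleph_1$-dense sets $A, B \subseteq \cantor$ admitting no Lipschitz bijection, thereby refuting $\BA_{\rm Lip}(\cantor)$ (Definition \ref{BAlipDef1}); since $\mfb \ge \aleph_1$ always holds, this yields $\mfb > \aleph_1$. The engine of the argument is that a Lipschitz map of $\cantor$ has bounded look-ahead. Writing $\Delta(x,y) = \min\{n : x(n) \ne y(n)\}$ for the splitting level, a map $h$ with Lipschitz constant $2^k$ satisfies $\Delta(h(x),h(y)) \ge \Delta(x,y) - k$, so that $h(x)\hook n$ is determined by $x \hook (n+k)$; when the matching of the two dense sets extends to a bi-Lipschitz homeomorphism one also gets the reverse bound, hence $|\Delta(h(x),h(y)) - \Delta(x,y)| \le k$ for all $x,y$. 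Thus a Lipschitz bijection is an approximate isometry of the tree metric, distorting every splitting level by at most the additive constant $k$. The whole point is that a single such $h$ carries one fixed $k$, whereas $\mfb = \aleph_1$ supplies an unbounded, $<^*$-increasing sequence $\langle g_\alpha : \alpha < \omega_1\rangle$ of strictly increasing functions in $\omom$ whose growth outruns every fixed $k$.

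I would then encode this scale into the metric structure of the two sets. The aim is to arrange, for cofinally many $\alpha$, a configuration in $A$ whose splitting pattern forces any rate-preserving image in $B$ to read ahead by at least $g_\alpha(n)$ at stage $n$, while $B$ is built so that its available configurations only ever demand look-ahead governed by a single fixed function $h^*$ computed from the construction. Concretely I would work along a family of spines (branches) $p$ together with the associated fans of sibling subtrees $[\,p \hook n {}^\frown (1 - p(n))\,]$, placing points of $A$ and $B$ so that matching the $\alpha$-configuration to any configuration of $B$ forces, via $|\Delta(h(x),h(y)) - \Delta(x,y)| \le k$, an inequality $g_\alpha(n) \le h^*(n) + k$ for cofinally many $n$. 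As $k$ is fixed and $h^*$ is a single function, this bounds every $g_\alpha$ by $h^* + k$, contradicting unboundedness of the scale.

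The main obstacle is rigidity: a priori the Lipschitz bijection need not respect the spine/fan decomposition, and since $A$ and $B$ are $\aleph_1$-dense there is no isolated ``clean'' fan at any point, every basic clopen set meeting each set in $\aleph_1$ points. Hence the encoding must survive an arbitrary dense background and be phrased entirely in terms of pairwise splitting levels, the only data a bounded-distortion map is guaranteed to preserve. I expect the resolution to encode the scale as a gap-type obstruction in the pattern of splitting levels: along each spine one builds, using $g_\alpha$, simultaneous increasing and decreasing demands on the look-ahead that no single additive constant can meet, so that the approximate-isometry property is violated for some pair unless $g_\alpha \le^* h^* + k$. Verifying that this obstruction is genuinely immune to the dense background and that the resulting $A, B$ are $\aleph_1$-dense is where the real work lies; once it is in place the contradiction with the unboundedness of $\langle g_\alpha \rangle$, and hence $\mfb > \aleph_1$, follows as above. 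Finally, by Theorem \ref{implicationintro} the same bound transfers to $\BA_{\rm Lip}(\baire)$.
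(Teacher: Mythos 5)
Your approach targets the wrong class of maps, and the step that is supposed to refute the axiom fails. Definition \ref{BAlipDef1} asserts only the existence of a Lipschitz map $f:\cantor\to\cantor$ (with Lipschitz constant $1$) whose restriction to $A$ is an injection into $B$; it gives no bijection, no homeomorphism, and nothing bi-Lipschitz. Consequently the only metric information you may assume about $f$ is the one-sided bound $\Delta(f(x),f(y))\ge \Delta(x,y)$; there is no constant $k$ for which the reverse inequality holds, since the inverse of a Lipschitz map need not be Lipschitz (the paper remarks on exactly this asymmetry in a footnote to the proof of Lemma \ref{consistencylemma}). Your entire engine --- the ``approximate isometry'' property $|\Delta(f(x),f(y))-\Delta(x,y)|\le k$ and the derived inequality $g_\alpha(n)\le h^*(n)+k$ --- therefore rests on a hypothesis the axiom never grants: you yourself condition it on ``the matching of the two dense sets extends to a bi-Lipschitz homeomorphism,'' which is precisely what is not given. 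Moreover, even if you succeeded in building $\aleph_1$-dense $A,B$ such that no uncountable subset of $A$ maps into $B$ with bounded two-sided distortion, this would not refute $\BA_{\rm Lip}(\cantor)$: the axiom could still be witnessed by a Lipschitz injection that collapses splitting levels unboundedly. Indeed, for the extreme case $k=0$ the paper's Proposition \ref{counterexample} produces such a pair (admitting no uncountable partial isometries between them) outright in $\ZFC$, while $\BA_{\rm Lip}(\cantor)$ is nevertheless consistent (Theorem \ref{consistency}); so obstructions phrased purely in terms of (approximately) preserved splitting levels can never decide the axiom, with or without a scale. A smaller slip of the same kind: to refute Definition \ref{BAlipDef1} you must exclude Lipschitz \emph{injections} of $A$ into $B$, not bijections.

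For comparison, the paper's proof never analyzes distortion at all; it uses only continuity. By a theorem of Bartoszy\'nski and Shelah (Fact \ref{BSfact}) there is a set $X\subseteq\cantor$ of size $\mfb$ which no continuous function maps onto an unbounded set. Assuming $\mfb=\aleph_1$, one takes $B$ to be an $\aleph_1$-dense $\leq^*$-increasing unbounded scale and $A\supseteq X$ any $\aleph_1$-dense superset; the map given by the axiom, restricted to $X$, is continuous and its image is an uncountable subset of a scale, hence unbounded --- contradicting the defining property of $X$. The combinatorial content that your sketch postpones (``where the real work lies'') is, in effect, that theorem; any repair of your argument needs an obstruction that survives maps satisfying only $\Delta(f(x),f(y))\ge\Delta(x,y)$, and no amount of spine/fan bookkeeping carried out under the bi-Lipschitz assumption will supply it.
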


The proof of the consistency of $\BA_{\rm Lip} (\cantor)$ gives a formally stronger statement which we dub $\overline{\BA}_{\rm Lip}(\cantor)$ (and the same for $\baire$). A similar proof as that of Theorem \ref{implicationintro} shows that $\overline{\BA}_{\rm Lip}(\baire)$ implies $\overline{\BA}_{\rm Lip}(\cantor)$. These stronger statements have an effect on cardinal characteristics which is open for $\BA$ and known to be false for $\BA (\cantor)$ and $\BA (\baire)$.
\begin{theorem}
    The axiom $\overline{\BA}_{\rm Lip}(2^\omega)$ (and hence $\overline{\BA}_{\rm Lip}(\baire)$ as well) implies all of the cardinals in the Cicho\'{n} diagram are greater than $\aleph_1$. \label{cichonintro}
\end{theorem}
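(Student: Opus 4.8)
The plan is to reduce the entire statement to the single inequality $\add(\N) > \aleph_1$. Since $\add(\N)$ is the provable least element of Cicho\'{n}'s diagram — every other entry lies above it in the $\ZFC$ order — establishing $\add(\N) > \aleph_1$ forces all the remaining entries above $\aleph_1$ simultaneously; in particular this subsumes $\mfb > \aleph_1$, so one may regard Theorem \ref{bintro} as a special case. By the implication recorded just above, that $\overline{\BA}_{\rm Lip}(\baire)$ implies $\overline{\BA}_{\rm Lip}(\cantor)$, it suffices to argue from $\overline{\BA}_{\rm Lip}(\cantor)$. Finally I would invoke Bartoszy\'{n}ski's combinatorial characterization of the additivity of the null ideal: $\add(\N) > \aleph_1$ holds precisely when every family $\{ f_\alpha : \alpha < \omega_1 \} \subseteq \baire$ is localized by a single slalom $S$ with $|S(n)| \le n$, meaning $f_\alpha(n) \in S(n)$ for all but finitely many $n$, for each $\alpha$.

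Fix such a family. I would code it into $\cantor$ by attaching to each $f_\alpha$ a point $x_\alpha$ whose successive blocks, along a fixed fast partition of $\omega$ into intervals $I_n$, record the values $f_\alpha(n)$ through a self-delimiting block code (after harmlessly bounding the range of each $f_\alpha$). Let $A$ be an $\aleph_1$-dense subset of $\cantor$ containing $\{ x_\alpha : \alpha < \omega_1 \}$, and let $B$ be a fixed, ``evenly spread'' $\aleph_1$-dense set serving as a reference scale. Applying the axiom to $A$ and $B$ produces a Lipschitz homeomorphism $h \colon \cantor \to \cantor$ with $h''A = B$; write $2^k$ for a Lipschitz bound valid for both $h$ and $h^{-1}$ (that the axiom delivers bi-Lipschitz control, rather than the bare existence of a continuous isomorphism, is where its precise formulation is used). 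The strengthening to the barred axiom $\overline{\BA}_{\rm Lip}(\cantor)$ enters because it lets me impose a coherent countable system of constraints aligning the coded points $x_\alpha$ with the reference scale, so that the resulting global isomorphism actually transports decodable information rather than merely existing.

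The decisive structural point is that $h$ carries a single Lipschitz constant: $h(x)\upharpoonright(n-k)$ is determined by $x\upharpoonright n$ uniformly over all $x$, and symmetrically for $h^{-1}$, so one fixed finite look-ahead $k$ simultaneously governs all $\aleph_1$ coded points. Reading through the block code, this uniform look-ahead confines each value $f_\alpha(n)$, relative to the reference image of $x_\alpha$ in $B$, to a set of only boundedly many possibilities. The task is then to organize these possibilities, level by level and uniformly across the $\aleph_1$ indices, into a single slalom $S$ with $|S(n)| \le n$ that localizes every $f_\alpha$; granting this, Bartoszy\'{n}ski's characterization yields $\add(\N) > \aleph_1$ and with it that every entry of Cicho\'{n}'s diagram exceeds $\aleph_1$.

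I expect the real work to lie entirely in this coding-and-extraction step, the reductions being soft. Three matters need care. First, $A$ must be genuinely $\aleph_1$-dense so that the axiom applies, yet the $x_\alpha$ must still recoverably encode the $f_\alpha$; padding $\{x_\alpha : \alpha < \omega_1\}$ to a dense set without corrupting the code requires attention. Second, and most delicate, the block code, the reference set $B$, and the ultrametric must be calibrated so that a single Lipschitz constant $2^k$ yields, at each level $n$ and uniformly in $\alpha$, a set of admissible values of size at most $n$ that amalgamates across all $\aleph_1$ indices into one honest slalom — it is precisely here that the ``evenly spread'' choice of $B$ and the coherent alignment furnished by the barred axiom must be exploited to keep the per-$\alpha$ possibility sets from proliferating. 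Third, one must confirm that $\overline{\BA}_{\rm Lip}(\cantor)$, as extracted from the consistency proof, really does license the coherent countable system of constraints the argument imposes. Once the uniform look-ahead bound is correctly converted into a width-$n$ slalom, the conclusion is immediate.
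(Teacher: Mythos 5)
Your global reduction is the same as the paper's: get $\add(\Null) > \aleph_1$ and quote that $\add(\Null)$ is the least entry of Cicho\'n's diagram. But the core of your argument rests on a misreading of the axiom, and this is fatal. You apply $\overline{\BA}_{\rm Lip}(\cantor)$ to obtain ``a Lipschitz homeomorphism $h\colon \cantor \to \cantor$ with $h''A = B$'' together with bi-Lipschitz control on $h^{-1}$. The axiom says no such thing: $\overline{\BA}_{\rm Lip}(\cantor)$ only provides \emph{countably many} Lipschitz maps $f_n$ with $B = \bigcup_{n<\omega} f_n``A$. Indeed, the single-bijection statement you invoke is not merely stronger --- it is provably false: by Lemma \ref{nowhereisometriesareneversurjective} a surjective Lipschitz self-map of $\cantor$ is an isometry, and Proposition \ref{counterexample} exhibits $\aleph_1$-dense sets no uncountable subset of which embeds isometrically into the other. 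This inconsistency is precisely why the paper formulates the barred Cantor axiom with countably many functions. Moreover, even for the maps the axiom does give, there is no ``Lipschitz bound valid for $h^{-1}$'': the inverse of a Lipschitz (constant-$1$) injection need not be Lipschitz, an asymmetry the paper points out explicitly in the consistency proof.

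Two further gaps would remain even after repairing the statement of the axiom. First, your coding runs in the wrong direction. You place the coded family $\{x_\alpha\}$ in the \emph{domain} $A$ and map it to a reference set $B$; but Lipschitz information flows from domain to image, so this constrains the (irrelevant) images, not the $f_\alpha$. The mechanism that actually works --- and is the heart of the paper's proof --- is the opposite: make the \emph{domain} $A$ a union of $\aleph_1$-dense subsets of branches of trees of width $c$ (Shelah's corsets), let the \emph{arbitrary} set be the target $B$, and use the fact that a constant-$1$ Lipschitz map is a level-preserving tree homomorphism, hence cannot increase tree width; then $B \subseteq \bigcup_{n,m}[\tilde{f}_m``T_n]$ is covered by countably many width-$c$ trees, which by Shelah's characterization of null additivity (Theorem \ref{shelahnulladditive}) makes every $\aleph_1$-sized set null additive. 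Your ``amalgamate the per-$\alpha$ possibility sets into one slalom of width $n$'' step is exactly the missing core, and as set up there is no uniform countable structure across the $\aleph_1$ indices from which to build it. Second, your opening move of ``harmlessly bounding the range of each $f_\alpha$'' already requires $\mfb > \aleph_1$, which you present as a \emph{corollary} of the theorem being proved; the argument as written is circular. The paper avoids this by proving $\mfb > \aleph_1$ separately (Lemma \ref{btheorem}, via the Bartoszy\'nski--Shelah set) and then concluding via $\add(\Null) = \min\{\mfb, \add^*(\Null)\}$, where the second coordinate is handled by the null-additivity argument above.
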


\begin{corollary}
    $\BA (\cantor)$ does not imply $\overline{\BA}_{\rm Lip}(\cantor)$ and the same for $\baire$.
\end{corollary}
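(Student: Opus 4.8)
The plan is to detect the non-implication purely through cardinal invariants. By Theorem \ref{cichonintro}, $\overline{\BA}_{\rm Lip}(\cantor)$ implies that every cardinal in the Cicho\'n diagram exceeds $\aleph_1$; contrapositively, $\overline{\BA}_{\rm Lip}(\cantor)$ fails in any model where even one of these invariants---for concreteness, the bounding number $\mfb$---equals $\aleph_1$. It therefore suffices to produce a single model satisfying $\BA(\cantor)$ together with $\mfb = \aleph_1$; the same scheme handles $\baire$, using that Theorem \ref{cichonintro} likewise forces the Cicho\'n invariants above $\aleph_1$ under $\overline{\BA}_{\rm Lip}(\baire)$.

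For the positive half I would invoke Fact \ref{BAfacts}, which records that---in sharp contrast to the barred Lipschitz axioms---$\BA(\cantor)$ and $\BA(\baire)$ follow from a hypothesis strictly weaker than full $\MA$, one that need not inflate the bounding number. The natural concrete witness is the Cohen model, obtained by adding $\aleph_2$ Cohen reals to a model of $\CH$. There $\cov(\M) = \mfc = \aleph_2$, which suffices to secure $\BA(\cantor)$ and $\BA(\baire)$, while $\mfb = \aleph_1$ as usual in the Cohen model (the first $\aleph_1$ Cohen reals already form an unbounded family). Thus in this model $\BA(\cantor)$ holds, yet $\mfb = \aleph_1$ contradicts the conclusion of Theorem \ref{cichonintro}, so $\overline{\BA}_{\rm Lip}(\cantor)$ must fail; the same model witnesses $\BA(\baire) \wedge \neg\,\overline{\BA}_{\rm Lip}(\baire)$.

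The logical glue here is trivial; the only genuine content---and the sole place an obstacle could arise---is the positive half, namely confirming that $\BA(\cantor)$ (resp.\ $\BA(\baire)$) really does hold in a model carrying a small Cicho\'n invariant. This is precisely the assertion, recorded above, that the effect on cardinal characteristics is ``known to be false'' for $\BA(\cantor)$ and $\BA(\baire)$, and it is where one must lean on the exact form of Fact \ref{BAfacts}. Should $\BA(\cantor)$ not be available in the pure Cohen model, any cited model realizing $\BA(\cantor)$ simultaneously with $\mfb = \aleph_1$---indeed with any single Cicho\'n invariant equal to $\aleph_1$---does the job equally well, since Theorem \ref{cichonintro} refutes $\overline{\BA}_{\rm Lip}(\cantor)$ from the failure of even one of its inequalities.
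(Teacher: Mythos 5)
Your overall scheme---produce a model of $\BA(\cantor)$ in which some Cicho\'n invariant equals $\aleph_1$ and then quote Theorem \ref{cichonintro}---is exactly the paper's scheme, but your instantiation breaks down at precisely the step you flag as ``the only genuine content.'' The only sufficient condition for $\BA(\cantor)$ available in the paper (and, as far as is known, in the literature) is Fact \ref{BAfacts}(1): $\mfp > \aleph_1$. You paraphrase this as a hypothesis that ``need not inflate the bounding number,'' but it provably does: the paper itself records $\mfp \leq \mfb$ as a theorem of $\ZFC$, so \emph{any} model whose $\BA(\cantor)$ comes from Fact \ref{BAfacts} automatically satisfies $\mfb > \aleph_1$. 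Thus $\mfb$ is structurally the wrong invariant to try to keep small. The Cohen model does not rescue the argument: there $\mfb = \aleph_1$ forces $\mfp = \aleph_1$, so Fact \ref{BAfacts} is silent, and your substitute claim that $\cov(\M) = \mfc$ secures $\BA(\cantor)$ is neither in the paper nor a known theorem. Indeed, whether $\BA(\cantor)$ is consistent with $\mfp = \aleph_1$ at all is essentially the open question (going back to \cite{Stepranswatson87}) that the paper records in its final section, so your closing hedge---``any cited model realizing $\BA(\cantor)$ simultaneously with $\mfb = \aleph_1$''---asks for something not known to exist; it acknowledges the hole without filling it.

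The repair, and the paper's actual proof, is to make small an invariant that is \emph{not} provably above $\mfp$: the paper uses that $\mfp > \aleph_1$ is consistent with $\cov(\Null) = \aleph_1$, hence with $\add(\Null) = \aleph_1$ (the paper notes $\mfp$ and $\add(\Null)$ are independent). In such a model, Fact \ref{BAfacts}(1) yields both $\BA(\cantor)$ and $\BA(\baire)$, while Theorem \ref{add2} (the engine behind Theorem \ref{cichonintro}, since $\add(\Null)$ is the least invariant in the diagram) shows $\overline{\BA}_{\rm Lip}(\cantor)$ fails; Lemma \ref{add}, or the implication in Theorem \ref{implications}, gives the failure of $\overline{\BA}_{\rm Lip}(\baire)$ as well. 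So detection by a cardinal invariant is sound, but the invariant must be taken from the null side of Cicho\'n's diagram, where independence from $\mfp$ is available, rather than $\mfb$.
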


We do not know if the analogue of Avraham and Shelah's theorem from \cite{AvrahamShelah81} that $\BA$ does not follow from $\MA$ holds for our Lipschitz variants, however we have the following partial result.
\begin{theorem}
    $\MA_{\rm \aleph_1} ({\rm Knaster}) +$``every Aronszajn tree is special" does not imply $\BA_{\rm Lip} (\cantor)$.  \label{MAintro}
\end{theorem}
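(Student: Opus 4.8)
The plan is to separate the forcing axiom from $\BA_{\rm Lip}(\cantor)$ by the route used by Avraham and Shelah \cite{AvrahamShelah81} for $\MA$ versus $\BA$: construct in a ground model of $\CH$ a fixed pair of $\aleph_1$-dense sets $A,B\subseteq\cantor$ witnessing the failure of $\BA_{\rm Lip}(\cantor)$, and then perform a finite-support ccc iteration that forces $\MA_{\aleph_1}(\mathrm{Knaster})$ together with ``every Aronszajn tree is special'' while preserving the fact that $A,B$ remain a counterexample. The point is that a cardinal-invariant obstruction is unavailable here, since $\MA_{\aleph_1}(\mathrm{Knaster})$ already forces $\mfb>\aleph_1$ (via the $\sigma$-centered Hechler forcing), so by Theorem \ref{bintro} the separation must be carried by a genuinely combinatorial, iteration-indestructible property of the pair $A,B$ rather than by a cardinal characteristic.

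The first task is to isolate the right indestructible property $\Phi(A,B)$. Here I would exploit the rigidity of Lipschitz maps on $\cantor$: a Lipschitz bijection with Lipschitz inverse is a bounded-delay map, in the sense that for some fixed $k$ the value $f(x)\restriction n$ is determined by $x\restriction(n+k)$ and conversely. Consequently, any Lipschitz correspondence witnessing $\BA_{\rm Lip}(\cantor)$ for $A,B$ would, after restricting to an uncountable subset and fixing the delay $k$ together with a finite local ``rule'', collapse to a coherent, pattern-preserving assignment of uncountably many points of $A$ to points of $B$. I would therefore take $\Phi(A,B)$ to be an entanglement/oscillation statement asserting that no such uncountable coherent assignment exists for any $k$, and construct $A,B$ by a transfinite recursion of length $\omega_1$ under $\CH$ (guided by a $\diamondsuit$-sequence if convenient), diagonalizing against all potential local rules. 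The crucial feature is that $\Phi$ be phrased finitarily enough --- in terms of finite windows of the points --- that it can survive the subsequent ccc forcing, not merely defeat ground-model maps.

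Next I would run a finite-support iteration $\langle\P_\alpha,\dot\Q_\alpha:\alpha<\omega_2\rangle$ with a bookkeeping function that, cofinally often, forces with each Knaster poset of size $\le\aleph_1$ appearing in an intermediate model and with the specialization poset of each Aronszajn tree appearing in an intermediate model. Since each iterand is ccc, the iteration is ccc, preserves cardinals, and --- using $\CH$ in the ground model and the usual $\aleph_2$-cc bookkeeping --- the final model satisfies $\MA_{\aleph_1}(\mathrm{Knaster})$ and ``every Aronszajn tree is special''. This part is routine; the only care needed is to include the (possibly non-Knaster) specialization forcings, which is exactly what makes the hypothesis ``every Aronszajn tree is special'' a genuine strengthening over what $\MA_{\aleph_1}(\mathrm{Knaster})$ delivers for free.

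The heart of the argument, and the main obstacle, is the preservation theorem: $\Phi(A,B)$ must hold in the final model, so that $A,B$ still witness $\neg\BA_{\rm Lip}(\cantor)$. I would establish preservation one step at a time and then at limits. For a Knaster iterand the single-step preservation is comparatively routine: if some condition forced a generic Lipschitz correspondence violating $\Phi$, then uncountably many conditions would each force a fragment of the forbidden coherent assignment, and the Knaster property would let me amalgamate these into the configuration forbidden by $\Phi$ already in the ground model, a contradiction. The delicate case is the specialization forcing, which need not be Knaster, so the amalgamation is not automatic; here I would analyse the finite specializing conditions directly, again reducing a hypothetical generic Lipschitz counter-map to a ground-model configuration contradicting $\Phi$, and then feed these single-step results into a preservation theorem for finite-support ccc iterations to handle the limit stages. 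The principal difficulty is thus twofold and intertwined: choosing $\Phi$ strong enough to forbid every Lipschitz correspondence yet local enough to be preserved by finite conditions, and pushing the preservation through the non-Knaster specialization steps and the limits of the iteration.
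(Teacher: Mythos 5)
Your overall architecture is the same as the paper's: fix under $\CH$ a pair of $\aleph_1$-dense sets $A,B\subseteq\cantor$ forming an iteration-indestructible counterexample, then run the standard finite-support ccc iteration for $\MA_{\aleph_1}({\rm Knaster})+$``every Aronszajn tree is special'' and prove step-by-step preservation; your Knaster step (amalgamating uncountably many pairwise compatible conditions, each deciding one pair $(a_\alpha,b_\alpha)$, into a ground-model Lipschitz injection) is exactly the paper's argument. However, there are two genuine gaps where the real content lies. First, your property $\Phi(A,B)$ is never defined: you ask for a ``finitary, finite-window'' entanglement statement strong enough to kill all Lipschitz correspondences yet preserved by finite conditions, but you give no construction and no reason such a $\Phi$ exists. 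The paper shows this search is unnecessary: it works directly with the property ``$A$ Lipschitz avoids $B$'' (no uncountable $Z\subseteq A$ Lipschitz-injects into $B$), and the role you want ``finitariness'' to play is instead played by a lifting lemma --- a Lipschitz map on an arbitrary set extends canonically to its closure, so any hypothetical counter-map is coded by a single real (a tree together with a level- and order-preserving map on it), and ``being Lipschitz'' is then absolute between models. This coding-by-a-real device is what makes the preservation arguments go through, both at uncountable-cofinality limits of the iteration (where ccc plus finite support pulls the code back to an intermediate stage) and, crucially, at the specializing steps.

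Second, and more seriously, your treatment of the specializing forcings $\P_T$ is only a declaration of intent: ``analyse the finite specializing conditions directly.'' A direct amalgamation argument cannot work there, since $\P_T$ for a non-special Aronszajn tree is ccc but not Knaster --- that is precisely why this case is delicate. The paper's key ingredient, which your proposal is missing, is an external theorem (Theorem 3.6 of the paper's reference on specializing forcings): every real added by $\P_T$ is added by a $\sigma$-centered subforcing of $\P_T$. Combining this with the lifting lemma, the hypothetical Lipschitz counter-map, being coded by a real, already lives in a $\sigma$-centered (hence Knaster) subextension; there the Knaster lemma applies, and absoluteness transfers the contradiction back. Without this theorem (or a substitute for it), the step your own write-up identifies as the heart of the matter remains unproved, so the proposal as it stands does not constitute a proof.
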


The rest of this paper is organized as follows. In the rest of this section we mention some preliminaries and definitions we will use throughout the paper. In Section 2 we study Lipschitz functions on Baire and Cantor space and record some facts we will need. In Section 3 we introduce the Lipschitz Baumgartner axioms and discuss some basic properties. In particular we prove Theorems \ref{implicationintro} and \ref{bintro}. In Section 4 we prove the consistency of the Lipschitz axioms. In Section 5 we discuss the relation to the null ideal and prove Theorem \ref{cichonintro}. In Section 6 we prove Theorem \ref{MAintro}. Section 7 concludes the paper with a series of open questions. 
\subsection{Preliminaries}

Our notation is standard, conforming to that of the monographs \cite{KenST} and \cite{JechST}. For information of cardinal characteristics we also refer the reader to \cite{BarJu95}. We recall some facts we will need. Let $\kappa$ be a cardinal. Recall that a linear order is $\kappa$-{\em dense} if between any two points there are $\kappa$ many. It is easy to see that a separable linear order is $\aleph_1$-dense if and only if it is isomorphic to a set of reals which is $\aleph_1$-dense in the sense that it has intersection of size $\aleph_1$ with every non-empty open interval (in the real line ordering). Baumgartner's axiom $\BA$ states that all such linear orders are isomorphic. The following is easy to see and motivates Definition \ref{topBAdef} above.

\begin{proposition}[See \cite{Stepranswatson87}]
    $\BA$ holds if and only if $\BA_{\aleph_1}(\mathbb R)$ holds.
\end{proposition}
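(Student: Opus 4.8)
The plan is to prove the two implications separately, relying on two standard facts: the autohomeomorphisms of $\mathbb R$ are exactly the strictly monotone continuous bijections, and (as observed just above) a subset of $\mathbb R$ is topologically $\aleph_1$-dense if and only if it is an $\aleph_1$-dense set of reals, equivalently a separable $\aleph_1$-dense linear order. These let me freely translate between the order-theoretic statement $\BA$ and the topological statement $\BA_{\aleph_1}(\mathbb R)$.

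For the direction $\BA \Rightarrow \BA_{\aleph_1}(\mathbb R)$, I would take $\aleph_1$-dense $A, B \subseteq \mathbb R$ and use $\BA$ to get an order isomorphism $f : A \to B$. Since every $\aleph_1$-dense set is dense in $\mathbb R$ and unbounded both above and below, I would extend $f$ to the whole line by $h(x) = \sup\{f(a) : a \in A,\ a \le x\}$ and verify that $h$ is a strictly increasing continuous surjection, hence an autohomeomorphism of $\mathbb R$, with $h \hook A = f$ and therefore $h''A = B$. Well-definedness comes from unboundedness, strict monotonicity and surjectivity from density of $A$ and $B$, and continuity from monotonicity together with the density of the range $B$; all of this is routine bookkeeping.

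For the converse $\BA_{\aleph_1}(\mathbb R) \Rightarrow \BA$, the axiom yields, for $\aleph_1$-dense $A, B$, an autohomeomorphism $h$ with $h''A = B$. Being monotone, $h$ restricts to an order isomorphism $A \cong B$ when it is increasing, but only to $A \cong B^*$ (the reverse order) when it is decreasing. This orientation mismatch is the main obstacle, since nothing in the axiom lets one choose the witnessing homeomorphism to be increasing. I would resolve it by symmetrization: fix once and for all a symmetric $\aleph_1$-dense set $C = E \cup (-E)$, where $E \subseteq (0,\infty)$ is $\aleph_1$-dense (such sets exist in $\ZFC$). The reflection $x \mapsto -x$ is an order-reversing autohomeomorphism carrying $C$ onto itself, so $C \cong C^*$. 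Applying $\BA_{\aleph_1}(\mathbb R)$ to the pairs $(A, C)$ and $(B, C)$, each resulting homeomorphism gives either $A \cong C$ (increasing case) or $A \cong C^*$ (decreasing case); since $C \cong C^*$, both alternatives yield $A \cong C$, and likewise $B \cong C$. Composing gives $A \cong B$, which is exactly $\BA$.

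The genuinely delicate point is the orientation issue in the converse; everything else is the standard monotone-extension argument. I therefore expect the writeup to dispatch the forward direction quickly after recording the extension formula, and to devote most of its attention to the symmetrization step, which is what neutralizes the fact that autohomeomorphisms of $\mathbb R$ may be order-reversing.
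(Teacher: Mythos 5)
Your proof is correct, but there is nothing in the paper to compare it against: the paper states this proposition as folklore, citing Steprans--Watson, and gives no argument at all ("The following is easy to see"). Judged on its own terms, your writeup holds up. The forward direction is the standard monotone-extension argument, and the details check out: the supremum $h(x) = \sup\{f(a) : a \in A,\ a \le x\}$ is well defined because $A$ is unbounded in both directions, $h$ is strictly increasing because $A$ is dense, and continuity plus surjectivity follow because the range of a monotone function containing the dense unbounded set $B$ can have no jump intervals. The converse is where the real content lies, and you identify it correctly: a witnessing autohomeomorphism of $\mathbb R$ is strictly monotone but may be decreasing, in which case it only yields $A \cong B^*$. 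Your symmetrization device --- routing both $A$ and $B$ through a fixed symmetric $\aleph_1$-dense set $C = E \cup (-E)$, for which $x \mapsto -x$ witnesses $C \cong C^*$ --- resolves this cleanly, since in both the increasing and the decreasing case one gets $A \cong C$ and $B \cong C$, hence $A \cong B$. It is worth emphasizing that this detour through a third set is genuinely needed, not merely convenient: more naive fixes, such as applying $\BA_{\aleph_1}(\mathbb R)$ to the pair $(A, -B)$ or trying to extract $B \cong B^*$ from the axiom directly, fail because nothing lets one control the orientation of the witnessing homeomorphism. One final bookkeeping point, which you handle by explicit appeal to the paper's preliminaries: $\BA$ concerns all separable $\aleph_1$-dense linear orders, so the converse direction also needs the observation that every such order is isomorphic to a set of reals meeting every open interval in $\aleph_1$ points; with that translation in place, your argument gives the full statement.
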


We need the following cardinal characteristics which we recall here.

\begin{definition}
    \begin{enumerate}
        \item The {\em additivity of the null ideal}, denoted $\add(\Null)$ is the least $\kappa$ for which there are $\kappa$ many Lebesgue measure zero sets of reals whose union is not Lebesgue measure zero\footnote{As is well known, the definition makes no difference if we take $\mathbb R$, $\baire$, $\cantor$ or any other perfect Polish space.}.

        \item Given $f, g \in \baire$ we write $f\leq^*g$ if and only if for all but finitely many $k < \omega$ we have $f(k) \leq g(k)$. In this case we say $g$ {\em eventually dominates} $f$. The {\em bounding number}, denoted $\mfb$ is the least size of an unbounded set $\mathcal B \subseteq \baire$, i.e. a set so that no single $g\in \baire$ eventually dominates every $f \in \mathcal B$.         

        \item If $\mathcal F$ is a family of infinite subsets of natural numbers we say that $\mathcal F$ has the {\em strong finite intersection property} if for every finite subset $\mathcal A \subseteq \mathcal F$ we have that $\bigcap \mathcal A$ is infinite. If $A \subseteq \omega$ is infinite then we say that $A$ is a {\em pseudointersection} of such a family $\mathcal F$ if $A \subseteq^* B$ for all $B \in \mathcal F$ - i.e. for each $B$ there is a $k < \omega$ with $A \setminus k \subseteq B$. In this case we also say $A$ is {\em almost contained in} $B$. The {\em pseudointersection number} $\mfp$ is the least size of a family with the strong finite intersection property but no pseudointersection. 

    \end{enumerate}
\end{definition}

For the reader's convenience we recall the following well known facts, see e.g. \cite{BlassHB}.

\begin{fact}
    The follow inequalities are provable in $\ZFC$. In each case they many be strict.

    \begin{enumerate}
        \item $\mfp \leq \mfb$
        \item $\add(\Null) \leq \mfb$
    \end{enumerate}
    Moreover $\mfp$ and $\add(\Null)$ are independent - both strict inequalities are consistent with $\ZFC$.
\end{fact}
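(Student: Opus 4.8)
Since these are recalled as well-known facts, the natural plan is simply to cite \cite{BlassHB} and \cite{BarJu95}; for completeness, though, I would reconstruct the short arguments for the two $\ZFC$ inequalities and defer the independence to the literature. The whole statement splits into three tasks of very unequal difficulty: the two inequalities (routine), and the independence (the genuine content).

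For the inequality $\mfp \leq \mfb$ I would argue by contraposition: fix an arbitrary family $\{f_\alpha : \alpha < \kappa\} \subseteq \baire$ of strictly increasing functions with $\kappa < \mfp$ and show it is $\leq^*$-bounded, which forces every unbounded family to have size at least $\mfp$. Working in $\omega \times \omega$, set $X_\alpha = \{(n,m) : m > f_\alpha(n)\}$ and, to control the columns, $C_k = \{(n,m) : n \geq k\}$. The family $\{X_\alpha : \alpha < \kappa\} \cup \{C_k : k < \omega\}$ has size $\kappa < \mfp$ and the strong finite intersection property, since any finite subfamily contains $\{(n,m) : n \geq k_0,\ m > \max_i f_{\alpha_i}(n)\}$, which is infinite. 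A pseudointersection $A$ then satisfies $A \subseteq^* C_k$ for all $k$, so its first coordinates tend to infinity and each column meets $A$ finitely while infinitely many columns are met; defining $g(n)$ from the column maxima of $A$ at the least column $\geq n$ that meets $A$ and using $A \subseteq^* X_\alpha$ yields $f_\alpha \leq^* g$ for every $\alpha$. The only mild subtlety is this last interpolation step, needed because a pseudointersection is not a priori the graph of a function.

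For $\add(\Null) \leq \mfb$ the quickest route is to invoke the Cicho\'{n} diagram inequality $\add(\Null) \leq \add(\M) \leq \mfb$, or, if a self-contained argument is wanted, Bartoszy\'{n}ski's combinatorial characterization of $\add(\Null)$ in terms of slaloms, from which $\add(\Null) \leq \mfb$ is immediate because a localizing slalom in particular eventually bounds the functions it captures; I would cite \cite{BarJu95} for either.

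The substantive part is the independence of $\mfp$ and $\add(\Null)$, and here I would lean entirely on the standard forcing models documented in \cite{BarJu95} and \cite{BlassHB}. For $\mathrm{Con}(\mfp < \add(\Null))$ one drives $\add(\Null)$ up to $\aleph_2$ by a finite-support iteration of amoeba forcing while arranging that $\mfp$ (equivalently $\mathfrak t$) stays $\aleph_1$; for $\mathrm{Con}(\add(\Null) < \mfp)$ one instead forces $\mfp$ large while keeping the additivity of the null ideal at $\aleph_1$. The main obstacle is precisely the verification that these iterations preserve the relevant small witness (a tower on one side, a suitable null cover on the other), which is exactly the delicate preservation bookkeeping that the cited monographs carry out in full, so I would present this part as a reference rather than reprove it.
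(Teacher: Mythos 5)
Your proposal is correct and takes essentially the same approach as the paper: the paper records this Fact without proof, deferring to \cite{BlassHB} exactly as you defer the substantive independence claims to \cite{BlassHB} and \cite{BarJu95}. Your supplementary reconstructions are the standard correct arguments (the $\omega\times\omega$ pseudointersection argument for $\mfp \leq \mfb$, and the Cicho\'{n} diagram or slalom-localization argument for $\add(\Null) \leq \mfb$), so nothing is missing relative to what the paper itself establishes.
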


We have the following connection between cardinal characteristics and Baumgartner type axioms.

\begin{fact} \label{BAfacts}
    \begin{enumerate}
        \item For all $\kappa < \mfp$ we have $\BA_\kappa (\cantor)$ and $\BA_\kappa (\baire)$ (\cite{BaldwinBeaudoin89}).
        \item For all $\kappa < \mfp$ and all natural numbers $n > 1$ we have $\BA_\kappa (M^n)$ and $\BA_\kappa (\mathbb R^n)$ hold for any compact $n$-dimensional manifold $M^n$ (\cite{Stepranswatson87}).
        \item $\BA := 
        \BA_{\aleph_1} (\mathbb R)$ does not follow from $\MA + \neg \CH$. In particular $\aleph_1 < \mfp$ does not suffice to conclude $\BA$ (\cite{AvrahamShelah81}).
    \end{enumerate}
\end{fact}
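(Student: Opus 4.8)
The plan is to treat the three clauses separately, since clauses (1)--(2) are ZFC consequences of the cardinal hypothesis $\kappa < \mfp$, whereas clause (3) is a genuine independence result requiring forcing. For clause (1) I would route everything through Bell's theorem, namely that $\kappa < \mfp$ is equivalent to $\MA_\kappa(\sigma\text{-centered})$. Given $\kappa$-dense $A, B \subseteq \cantor$, define a poset $\mathbb{P}$ whose conditions are pairs $(\pi, p)$: here $\pi$ is a bijection between two finite clopen partitions of $\cantor$ into basic cylinders with matching pieces $\pi([s]) = [t]$, and $p$ is a finite partial injection from $A$ into $B$ compatible with $\pi$ in the sense that whenever $(a,b) \in p$ and $a \in U$ then $b \in \pi(U)$. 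The order refines the partition (requiring the finer bijection to map sub-pieces of $U$ into sub-pieces of $\pi(U)$, so that pieces are nested) and extends the matching. One then shows (i) $\mathbb{P}$ is $\sigma$-centered, using that there are only countably many finite clopen bijections $\pi$ and organizing the conditions so that the matching is absorbed into the partition data; and (ii) for each $a \in A$ the set $D_a = \{(\pi,p) : a \in \mathrm{dom}\, p\}$ is dense, and symmetrically $E_b$ for $b \in B$ is dense. Density of $D_a$ uses exactly the $\kappa$-density of $B$: if $a$ lies in a piece $U$ then $\pi(U) \cap B$ has size $\kappa$, so since fewer than $\kappa$ points are used there is an unused $b$ available, and one refines the partition to separate the new matched pair. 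Applying $\MA_\kappa(\sigma\text{-centered})$ to the $\kappa$ many dense sets $\{D_a\}_{a \in A} \cup \{E_b\}_{b \in B}$ produces a filter $G$ whose nested clopen bijections cohere into an autohomeomorphism $h$ with $h''A = B$. The Baire case is identical, since $\baire$ is also zero-dimensional and the same cylinder coding applies.

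The main obstacle in clause (1) is bookkeeping rather than conceptual: one must set up the conditions so that the generic union is genuinely a \emph{continuous bijection of the whole space} and not merely a partial matching of the matched points. This requires that the nesting in the order forces the mesh of the partitions along $G$ to shrink to $0$ (guaranteed by appropriate instances of the $D_a, E_b$), and that the induced map and its inverse are both controlled, so that $h$ extends uniquely to a homeomorphism carrying $A$ onto $B$; the $\sigma$-centeredness verification must be compatible with all of this.

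Clause (2) follows the same blueprint, with the finite clopen bijections replaced by finite ambient (e.g. piecewise-linear or locally modeled) homeomorphisms of the manifold $M^n$, again yielding a $\sigma$-centered poset to which $\MA_\kappa(\sigma\text{-centered})$ applies. The essential extra input, and the precise reason $n > 1$ is needed while $n = 1$ fails, is that for $n \geq 2$ the complement of any finite set in $M^n$ remains connected, so any finite matching of points can be realized by an ambient homeomorphism with prescribed behaviour elsewhere; this connectivity is what makes the density step go through. Its failure in dimension one — removing a point of $\mathbb{R}$ disconnects it, rigidly ordering the two sides — is exactly the linear-order constraint that obstructs the argument and that clause (3) shows is fatal.

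For clause (3) I would follow Avraham--Shelah: the goal is a model of $\MA + \neg\CH$ containing an $\aleph_1$-dense set $A \subseteq \mathbb{R}$ that is not order-isomorphic to its reverse $-A$, so that $\BA := \BA_{\aleph_1}(\mathbb{R})$ fails even though $\aleph_1 < \mfp$. The mechanism is an \emph{entangled} set: a $2$-entangled $\aleph_1$-dense set witnesses that $A \not\cong -A$, and in fact that there are many pairwise non-isomorphic $\aleph_1$-dense sets. One forces, by a finite-support ccc iteration, to make $\MA + 2^{\aleph_0} = \aleph_2$ hold while generically adding such an entangled set. The delicate point, and by far the main obstacle of the entire Fact, is \emph{preserving entangledness} through the remainder of the MA iteration: one must show that every relevant ccc poset preserves the property, via a $\Delta$-system plus oscillation analysis applied to names for a hypothetical isomorphism, so that entangledness survives into the final model while $\MA_{\aleph_1}$ (hence $\aleph_1 < \mfp$) holds. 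Since this preservation argument is the substantial technical heart of \cite{AvrahamShelah81}, I would cite it for clause (3) rather than reproduce it.
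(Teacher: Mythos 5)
Your clause (3) is unproblematic: like the paper itself, which states this Fact without proof and simply cites the literature, you defer the real content to Abraham--Shelah, and your description of the mechanism (a rigid $\aleph_1$-dense set whose rigidity is preserved through a ccc iteration forcing $\MA$) is accurate at the level of a citation. The genuine gap is in clause (1), and it recurs verbatim in clause (2): the poset you describe is not $\sigma$-centered --- it is not even ccc --- so $\MA_\kappa(\sigma\text{-centered})$ cannot be applied to it. Concretely, fix a partition bijection $\pi$ with a piece $U$, fix $a \in A \cap U$, and choose distinct points $b_\alpha \in B \cap \pi(U)$ for $\alpha < \omega_1$ (possible by $\kappa$-density, $\kappa$ being uncountable in the only nontrivial case). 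The conditions $(\pi, \{(a,b_\alpha)\})$ all carry \emph{identical} partition data, yet they are pairwise incompatible: any common extension would have to contain both $(a,b_\alpha)$ and $(a,b_\beta)$ in its matching, which is not a function. This is an uncountable antichain that no countable amount of ``partition data'' can separate, so your step (i) --- ``the matching is absorbed into the partition data'' --- cannot be repaired by reorganization: \emph{any} poset whose conditions contain exact finite matchings between the two fixed uncountable sets, ordered by extension of matchings, contains this antichain. The same objection defeats the $\sigma$-centeredness claim in clause (2); your geometric point that complements of finite sets in $M^n$, $n\geq 2$, are connected is indeed the correct explanation of where dimension one differs, but it bears on the \emph{density} of the sets $D_a$, not on centeredness, which is where the proposal breaks.

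The missing idea --- which is precisely the device on which this paper's own ccc construction in Lemma \ref{consistencylemma} turns, and which the cited proofs also rely on --- is to first partition $A = \bigcup_{\alpha} A_\alpha$ and $B = \bigcup_{\alpha} B_\alpha$ into countable dense pieces and to restrict the poset to finite partial injections respecting the pairing $A_\alpha \mapsto B_\alpha$. This caps the possible images of any fixed $a \in A_\alpha$ at the countable set $B_\alpha$, destroying the antichain above, and it is what makes a countable centering decomposition (coded by finite index data relative to fixed enumerations of the pieces, together with the clopen partition data) even conceivable; zero-dimensionality of $\cantor$ and $\baire$ (respectively, the connectivity facts for $n \geq 2$) then enters to show that any finite piece-respecting matching is realizable by an autohomeomorphism, which restores your density argument. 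As written, clauses (1) and (2) assert $\sigma$-centeredness of a poset that provably fails ccc, so the proposal has a genuine gap at its central step.
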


Finally recall that given metric spaces $(X, d_X)$ and $(Y, d_Y)$ an {\em isometry} is a function $i:X \to Y$ which preserves distances i.e. $d_X(x, y) = d_Y(i(x), i(y))$ for every $x, y \in D_X$. A {\em Lipschitz function} (with Lipschitz constant $1$) is a function $l:X \to Y$ so that for all $x, y \in X$ we have $d_X(x, y) \geq d_Y(l(x), l(y))$. Clearly isometries and Lipschitz functions are continuous. 

\section{Some Basics on Lipschitz and Isometries of Baire and Cantor Spaces}

Here we collect some facts about isometries and Lipschitz functions on Baire and Cantor space that we will use throughout the text. Most of these are presumably known though we struggled to find citations so we include (sketches of) proofs for completeness. No attempt has been made to work in further generality than $\baire$ and $\cantor$ as we aim to not distract. To be clear, the metric on $\baire$ is defined by $d(x, y) = \frac{1}{2^k}$ where $k$ is least with $x(k) \neq y(k)$. As $\cantor$ can be viewed as a subspace of $\baire$
we simply define the metric on $\cantor$ as the same\footnote{It is also common in the literature to have the metric be $d(x, y) = \frac{1}{k+1}$ where $k$ is as above. The choice here makes no difference in what we will do. The only point is that the longer sequences agree, the closer they are together.}. In practice this means a function $f:\baire \to \baire$ (or $f:\cantor \to \cantor$) is {\em Lipschitz} (with Lipschitz constant $1$) if for all $k < \omega$ we have that if $x \hook k = y \hook k$ then $f(x) \hook k  = f(y) \hook k$. It is an {\em isometry} if the above is an if and only if i.e. for all $k < \omega$ we have that $x \hook k = y \hook k$ if and only if $f(x) \hook k = f(y) \hook k$.

As we will be working with both $\baire$ and $\cantor$ and many of the basic lemmas hold mutandis mutatis for either of them we will state them for arbitrary spaces of the form $X^\omega$ where $|X| \leq \aleph_0$ and the topology is generated by the basic opens of the form $[s] := \{x \in X^\omega \; | \; s$ is an initial segment of $x\}$ for $s \in X^{<\omega}$. For the rest of this section fix a countable set $X$ as above. Given $s, y \in X^{<\omega}$ denote by $s \lhd t$ if and only if $s$ is an initial segment of $t$. Note that this means that $[s] \supseteq [t]$. Finally recall that a {\em tree} $T \subseteq X^{< \omega}$ is a set which is downwards $\lhd$-closed i.e. if $s \lhd t \in T$ then $s \in T$. The set of {\em branches} through a tree $T$ is the set $\{x \in X^\omega \; | \; \forall n \in \omega \, x \hook n \in T\}$. It's well known that a set $F \subseteq X^\omega$ is closed if and only if it's the set of branches through a tree. 

\begin{definition}
Let $T$ and $S$ be trees. A function $g:T \to S$ is a {\em homomorphism} if the following two conditions hold.
\begin{enumerate}
\item
$g$ is {\em level preserving} i.e. $|s| = |g(s)|$ for all $s \in T$.
\item
$g$ is {\em order preserving} i.e. if $s \lhd t$ then $g(s) \lhd g(t)$.
\end{enumerate}

\end{definition}

Note that if $T$ is a tree and $g:T \to X^{<\omega}$ is a homomorphism then the image of $g$ must be again a tree (of cofinal height). The first observation we need is that homomorphisms of $X^{<\omega}$ (treated as a tree itself) are effectively the same as Lipschitz functions on $X^\omega$.
\begin{lemma} \label{homomorphism representation}
\begin{enumerate}
\item
 If $f:X^\omega \to X^\omega$ is a Lipschitz, then the function $\tilde{f}:X^{<\omega} \to X^{<\omega}$ defined by $\tilde{f}(s) = t$ if and only if there is an $x \in [s]$ so that $f(x) \in [t]$ with $|s| = |t|$ is well defined and is a homomorphism of $X^{<\omega}$. 
\item
If $\tilde{f}:X^{< \omega} \to X^{< \omega}$ is a homomorphism then it induces a unique Lipschitz function $f:X^\omega \to X^\omega$ so that for all $k<\omega$ and all $x \in X^\omega$ we have that $f(x) \hook k = \tilde{f}(x \hook k)$. 
\end{enumerate}
\end{lemma}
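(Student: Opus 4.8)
The plan is to move back and forth between the combinatorial criterion recalled just above the lemma --- namely that $f$ is Lipschitz iff $x \hook k = y \hook k$ implies $f(x) \hook k = f(y) \hook k$ --- and the two defining clauses (level preserving, order preserving) of a homomorphism, so that each direction becomes a matter of unwinding definitions while carefully tracking lengths.

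For part (1), I would first check that $\tilde f$ is a well-defined function. Given $s \in X^{<\omega}$ with $|s| = k$, existence of a suitable $t$ is immediate: since $X \neq \emptyset$ the basic open $[s]$ is nonempty, so I pick any $x \in [s]$ and set $t := f(x) \hook k$; then $f(x) \in [t]$ and $|t| = k = |s|$. The only real point is independence of the choice of witness: if $x, y \in [s]$ then $x \hook k = s = y \hook k$, so the Lipschitz criterion gives $f(x) \hook k = f(y) \hook k$, whence the value $t$ does not depend on the chosen $x$. This is the single step that uses the hypothesis essentially, and I expect it to be the (modest) crux of the whole lemma. Level preservation $|\tilde f(s)| = |s|$ then holds by construction, and order preservation follows because if $s \lhd t$ with $|s| = j \le k = |t|$, then choosing one $x \in [t] \subseteq [s]$ computes both $\tilde f(s) = f(x) \hook j$ and $\tilde f(t) = f(x) \hook k$ from the same point $f(x)$, and $f(x) \hook j \lhd f(x) \hook k$.

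For part (2), given a homomorphism $\tilde f$ I would define $f(x) := \bigcup_{k < \omega} \tilde f(x \hook k)$. Order preservation makes $\{\tilde f(x \hook k)\}_{k < \omega}$ a $\lhd$-chain, and level preservation forces $|\tilde f(x \hook k)| = k$, so the union is a genuine element of $X^\omega$ satisfying $f(x) \hook k = \tilde f(x \hook k)$ for every $k$, which is exactly the claimed commutation. That $f$ is Lipschitz is then immediate from the criterion, since $x \hook k = y \hook k$ yields $f(x) \hook k = \tilde f(x \hook k) = \tilde f(y \hook k) = f(y) \hook k$. Uniqueness is automatic as well: any Lipschitz $g$ with $g(x) \hook k = \tilde f(x \hook k)$ for all $k$ agrees with $f$ on every initial segment of $f(x)$, hence $g = f$ pointwise.

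Overall I regard this as a bookkeeping lemma rather than one with a genuine obstacle; the only substantive ingredient is the equivalence of the two formulations of ``Lipschitz,'' which is recorded immediately before the statement. The one subtlety worth flagging explicitly is reusing a single witness $x$ when verifying order preservation in (1), so that $\tilde f(s)$ and $\tilde f(t)$ are computed coherently from the same value of $f$.
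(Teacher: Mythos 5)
Your proposal is correct and follows exactly the route the paper takes: well-definedness of $\tilde f$ via the combinatorial Lipschitz criterion in part (1), and the definition $f(x) := \bigcup_{k<\omega}\tilde f(x\hook k)$ in part (2). The paper leaves the homomorphism properties and uniqueness as ``easily verified''; you have simply filled in those routine checks, including the useful observation that a single witness $x$ should be reused when verifying order preservation.
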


\begin{proof}
 Suppose $f$ is Lipschitz. Then for each $x, y \in X^\omega$ and $k < \omega$ we have that $f(x) \hook k = f(y) \hook k$ if $x \hook k = y \hook k$ hence the map $\tilde{f}:X^{<\omega} \to X^{<\omega}$ defined by $\tilde{f}(t) = s$ if and only if there is an $x \in X^\omega$ with $t \supset x$ and $s = f(x) \hook |t|$ is well defined. Moreover it is easily verified that it has the properties claimed.

    Conversely if we have a function $\tilde{f}$ which is order and level preserving then define $f(x)$ to be $\bigcup_{k < \omega} \tilde{f}(x \hook k)$. Again it's easy to see that it has the advertised properties. 
\end{proof}

Next we need to know how to lift Lipschitz functions from from small domains to larger ones.

\begin{lemma}
    Suppose $A \subseteq X^\omega$ and let $f:A \to X^\omega$ be Lipschitz. Then $f$ uniquely lifts to a (Lipschitz) function $\hat{f}:\bar{A} \to X^\omega$ where $\bar{A}$ denotes the closure of $A$. In particular a Lipschitz function on a dense subset of $X^\omega$ lifts uniquely to a Lipschitz function on the entire $X^\omega$.  \label{lifting lipschitz}
\end{lemma}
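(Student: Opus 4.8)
The plan is to reduce everything to the tree/homomorphism correspondence of Lemma \ref{homomorphism representation}, applied not to all of $X^{<\omega}$ but to the tree of initial segments of points of $A$. First I would recall the standard fact that, setting
\[
T_A := \{ x \hook n \mid x \in A,\ n < \omega \},
\]
the closure $\bar A$ is precisely the set of branches through $T_A$; that is, $x \in \bar A$ if and only if $x \hook n \in T_A$ for every $n$, which is just a restatement of the fact that $[x \hook n] \cap A \neq \emptyset$ for all $n$ exactly when $x$ lies in the closure of $A$.

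Next I would use the Lipschitz hypothesis to manufacture a homomorphism on $T_A$. The key point is that for $s \in T_A$ the string $f(x) \hook |s|$ does not depend on the choice of witness: if $x, y \in A$ both have $x \hook |s| = y \hook |s| = s$, then the Lipschitz condition forces $f(x) \hook |s| = f(y) \hook |s|$. Hence I may unambiguously set $\tilde f(s) := f(x) \hook |s|$ for any $x \in A$ with $x \hook |s| = s$. This $\tilde f : T_A \to X^{<\omega}$ is level preserving by construction, and order preserving: given $s \lhd s'$ in $T_A$, choose a single $x \in A$ with $x \hook |s'| = s'$ (so also $x \hook |s| = s$) and read off $\tilde f(s) = f(x)\hook|s| \lhd f(x)\hook|s'| = \tilde f(s')$.

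Then, exactly as in the proof of Lemma \ref{homomorphism representation}, this homomorphism induces the desired map on the branches of $T_A$: I would define $\hat f(x) := \bigcup_{k < \omega} \tilde f(x \hook k)$ for $x \in \bar A$, which is well defined since each $x \hook k \in T_A$ and the values $\tilde f(x \hook k)$ form a $\lhd$-increasing chain with $|\tilde f(x \hook k)| = k$. It is then routine to verify that $\hat f$ is Lipschitz (if $x \hook k = y \hook k$ then $\hat f(x) \hook k = \tilde f(x \hook k) = \tilde f(y \hook k) = \hat f(y) \hook k$) and that it extends $f$ (for $x \in A$ one has $\hat f(x) \hook k = \tilde f(x \hook k) = f(x) \hook k$ for every $k$).

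The one place that genuinely uses density, and the only step I expect to require a moment's care, is uniqueness; I would prove it directly from the Lipschitz property rather than invoking an abstract continuity argument. If $g : \bar A \to X^\omega$ is any Lipschitz extension of $f$ and $x \in \bar A$, then for each $k$ there is $z \in A$ with $z \hook k = x \hook k$, so $g(x) \hook k = g(z) \hook k = f(z) \hook k = \hat f(x) \hook k$; as this holds for all $k$ we get $g = \hat f$. The final ``in particular'' clause is simply the instance where $A$ is dense, so that $\bar A = X^\omega$.
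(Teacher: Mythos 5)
Your proof is correct and follows essentially the same route as the paper's: both realize the Lipschitz map as a level- and order-preserving homomorphism on the tree of finite initial segments of points of $A$ (the paper phrases this via approximating sequences $a_n \in A$ with $a_n \hook n = a \hook n$, but this is the same mechanism), and read off the extension on the branches of that tree, which form $\bar A$. Your explicit uniqueness argument is a small improvement, since the paper's proof asserts uniqueness in the statement but never actually verifies it.
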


\begin{proof}
    The point is that the homomorphism determined by $f$ is in fact defined on the tree of finite sequences extending to elements of $A$, whose set of branches is simply the closure of $A$. More concretely, let $f:A \to X^\omega$ be Lipschitz. If $a \in \bar{A}$ then there is a sequence $\{a_n\}_{n < \omega} \subseteq A$ so that for all $n < \omega$ we have that $a \hook n = a_n \hook n$. Fix one and define $\hat{f}(a)$ to be the unique $b$ so that $b\hook n = f(a_n) \hook n$. Note that the Lipschitz property implies that this is well defined since $a_m \hook n = a_n \hook n$ for $m >  n$ and is moreover independent of the choice of $\{a_n\}_{n < \omega}$ since by Lemma \ref{homomorphism representation} we only need the first $n$ bits of $a_n$ to define the map. The last thing to check is that this is indeed a Lipschitz function on $\bar{A}$. To see this, suppose $a, b \in \bar{A}$ and $k < \omega$ is such that $a\hook k = b\hook k$. There are in $A$ elements $a', b' \in A$ so that $a' \hook k = a\hook k$ and $b' \hook k = b\hook k$. We have that $\hat{f} (a)\hook k =f(a')\hook k = f (b') \hook k = \hat{f} (b) \hook k$ as needed. 

\end{proof}

\begin{lemma}
    Suppose $A, B \subseteq X^\omega$ and let $f:A \to B$ be a surjective isometry (and therefore a bijection from $A$ to $B$). Then there is a unique isometry $\hat{f}:\bar{A} \to \bar{B}$. In particular isometric bijections between dense sets lift to isometric (and hence homeomorphic) maps from $X^\omega \to X^\omega$. \label{lifting isometries}
\end{lemma}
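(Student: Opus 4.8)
The plan is to reduce everything to the uniqueness of the Lipschitz lift from Lemma \ref{lifting lipschitz}, applied to both $f$ and its inverse. Since $f$ is a surjective isometry it is a bijection, and both $f$ and $f^{-1}:B \to A$ are isometries, hence Lipschitz. Applying Lemma \ref{lifting lipschitz} to each, I obtain unique Lipschitz lifts $\hat{f}:\bar{A} \to X^\omega$ and $g:\bar{B} \to X^\omega$, where $g$ lifts $f^{-1}$. I would first observe that $\hat{f}$ actually maps $\bar{A}$ into $\bar{B}$ (and symmetrically $g$ maps $\bar{B}$ into $\bar{A}$): this is immediate from continuity, since $\hat{f}''A = f''A = B \subseteq \bar{B}$ and $\bar{B}$ is closed, so $\hat{f}''\bar{A} \subseteq \overline{\hat{f}''A} \subseteq \bar{B}$. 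The rest of the proof consists of showing that $\hat{f}$ and $g$ are mutually inverse isometries between $\bar{A}$ and $\bar{B}$.

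The crux is to show $g \circ \hat{f} = \mathrm{id}_{\bar{A}}$ and $\hat{f} \circ g = \mathrm{id}_{\bar{B}}$. On the dense set $A$ we have $(g \circ \hat{f})\hook A = f^{-1} \circ f = \mathrm{id}_A$. Now both $g \circ \hat{f}$ and the identity $\mathrm{id}_{\bar{A}}$ are Lipschitz functions on $\bar{A}$ (the former as a composition of Lipschitz maps, the latter trivially) which restrict to $\mathrm{id}_A$ on the dense subset $A$. By the uniqueness clause of Lemma \ref{lifting lipschitz} they must coincide, so $g \circ \hat{f} = \mathrm{id}_{\bar{A}}$; symmetrically $\hat{f} \circ g = \mathrm{id}_{\bar{B}}$. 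Thus $\hat{f}:\bar{A} \to \bar{B}$ is a bijection with inverse $g$.

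It remains to see that $\hat{f}$ is an isometry. Since $\hat{f}$ is Lipschitz, $d(\hat{f}(a), \hat{f}(a')) \leq d(a,a')$ for all $a, a' \in \bar{A}$; and since $g = \hat{f}^{-1}$ is Lipschitz, $d(a, a') = d(g(\hat{f}(a)), g(\hat{f}(a'))) \leq d(\hat{f}(a), \hat{f}(a'))$. Combining these gives equality, so $\hat{f}$ preserves distances and is an isometry onto $\bar{B}$. Uniqueness is then automatic: any distance-preserving extension of $f$ is in particular Lipschitz and hence equals $\hat{f}$ by Lemma \ref{lifting lipschitz}. Finally, when $A$ and $B$ are dense one has $\bar{A} = \bar{B} = X^\omega$, so $\hat{f}$ is an isometric autohomeomorphism of $X^\omega$, which gives the ``in particular'' clause.

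I expect the only genuinely non-formal point to be surjectivity of $\hat{f}$ onto $\bar{B}$: a naive application of the Lipschitz lift yields merely a map into $X^\omega$, and establishing bijectivity by a direct density-and-closure argument is slightly fiddly. Playing $\hat{f}$ off against the independently constructed lift $g$ of $f^{-1}$, so that surjectivity and injectivity both fall out of the uniqueness clause of Lemma \ref{lifting lipschitz}, is what keeps this step clean; everything else is formal.
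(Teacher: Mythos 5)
Your proof is correct, but it takes a genuinely different route from the paper's. The paper works hands-on with the lift $\hat{f}$ supplied by Lemma \ref{lifting lipschitz}: injectivity is checked by applying the isometry property of $f$ to elements of $A$ approximating $a \neq b$ up to a level $k$ where they differ; surjectivity is proved by pulling a sequence $b_k \to b \in \bar{B}$ back through the surjection $f$, observing that the preimages $a_k$ form a Cauchy sequence (this is where completeness of $X^\omega$ enters explicitly) whose limit maps to $b$; and the isometry property of $\hat{f}$ and its inverse is then verified level-by-level on approximating sequences. You instead argue formally: lift both $f$ and $f^{-1}$ via Lemma \ref{lifting lipschitz}, note by continuity that the lifts land in $\bar{B}$ and $\bar{A}$ respectively, and use the uniqueness clause of that lemma (equivalently, that continuous maps agreeing on a dense subset coincide) to conclude $g \circ \hat{f} = \mathrm{id}_{\bar{A}}$ and $\hat{f} \circ g = \mathrm{id}_{\bar{B}}$, after which the two Lipschitz inequalities for $\hat{f}$ and $g = \hat{f}^{-1}$ squeeze out distance preservation. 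Your route buys exactly what you say it does: injectivity, surjectivity, the isometry property and uniqueness all fall out of the algebra of mutually inverse extensions, with no Cauchy-sequence argument (completeness is absorbed into the black box of Lemma \ref{lifting lipschitz}), and the argument would transfer verbatim to any setting where that extension lemma holds. The paper's route buys an explicit combinatorial description of the lift in terms of initial segments, which matches the style in which the lift is actually used later (e.g. in the back-and-forth of Lemma \ref{CDHisom}). One minor point you may wish to make explicit: the paper's notion of isometry on $X^\omega$ is the biconditional $x \hook k = y \hook k \iff f(x) \hook k = f(y) \hook k$, and your conclusion is stated as preservation of the metric $d$; these are equivalent because $d$ takes values in $\{0\} \cup \{2^{-k} \; | \; k < \omega\}$, but a one-line remark to that effect would close the loop.
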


\begin{proof}
The map $\hat{f}$ is exactly as defined in the previous paragraph. Let us check that it is a bijection and both it and its inverse are also isometries. We start by checking that it is injective. Suppose that $a, b \in \bar{A}$ are distinct. Let $k < \omega$ be such that $a \hook k \neq b \hook k$. By the way that $\hat{f}$ was defined this means that there are $a_k, b_k \in A$ so that $a_k \hook k = a\hook k$ and $b_k \hook k = b\hook k$. Since $f$ is an isometry we have that $\hat{f}(a) \hook k = f(a_k) \hook k \neq f(b_k) \hook k = \hat{f}(b) \hook k$ and hence $\hat{f}(a) \neq \hat{f}(b)$. 

For surjectivity suppose that $b \in \bar{B}$ and let $\{b_k\}_{k < \omega}\subseteq B$ be a sequence converging to $b$ with $b\hook k = b_k \hook k$. Since $f$ is surjective there are $\{a_k\}_{k < \omega} \subseteq A$ so that $f(a_k) = b_k$ for all $k< \omega$. Now if $j > k$ we have that $b_j \hook  k = b_k \hook k$ and, since $f$ is an isometry therefore $a_j \hook k = a_k\hook k$ and consequently $\{a_k\}_{k < \omega}$ is a Cauchy sequence. Thus there is an $a = \lim_{k \to \infty} a_k$. Now observe though that $\hat{f}(a) = b$ by the way the lift was defined. 

Proving that both $\hat{f}$ and its inverse are isometries is a straightforward generalization of the observation that $\hat{f}$ is Lipschitz in the case of Lemma \ref{lifting lipschitz}. Indeed fix $a, a' \in \bar{A}$ with sequences $\{a_k\}_{k < \omega}$ and $\{a_k'\}_{k < \omega}$ in $A$ so that for all $j < \omega$ we have $a \hook j = a_j \hook j$ and $a' \hook j = a_j'\hook j$. Then for all $k < \omega$ we have $a \hook k = a' \hook k$ if and only if $a_k' \hook k = a' \hook k$ if and only if $f(a_k) \hook k = f(a_k ') \hook k$ if and only if $\hat{f}(a) \hook k = \hat{f}(a') \hook k$. The case of the inverse is identical.
\end{proof}

\begin{remark}
    A subtle though important point here is that we do not in general assume isometries to be surjective. For instance the map $f:\baire \to \baire$ defined so that for all $x \in \baire$ and $n < \omega$ we have $f(x) (n) = x(n) + 1$ is an isometry in our sense but is not surjective since its range is exactly the set of sequences which are never $0$. Nevertheless in the case of $2^\omega$ isometries will be surjective - see Lemma \ref{nowhereisometriesareneversurjective} below.  
\end{remark}

The first real result we'll need is that a variant Cantor's theorem holds $\baire$ and $2^\omega$ when restricted to isometries. 

\begin{lemma}
    Let $A, B \subseteq X^\omega$ be countable and dense. Then there is an isometry $f:X^\omega \to X^\omega$ so that $f``A = B$. 
    \label{CDHisom}
\end{lemma}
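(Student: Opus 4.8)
The plan is to reduce to a back-and-forth construction of a surjective isometry between the countable dense sets themselves and then invoke Lemma \ref{lifting isometries}. Concretely, for $x \neq y$ in $X^\omega$ write $\Delta(x,y)$ for the least $k$ with $x(k) \neq y(k)$; a map preserves the metric exactly when it preserves $\Delta$ on pairs. Thus it suffices to build a bijection $g : A \to B$ with $\Delta(g(a),g(a')) = \Delta(a,a')$ for all $a,a' \in A$, since by Lemma \ref{lifting isometries} such a $g$ lifts uniquely to an isometry $f = \hat g : X^\omega \to X^\omega$ with $f``A = g``A = B$. I would fix enumerations $A = \{a_n : n < \omega\}$ and $B = \{b_n : n < \omega\}$ and construct an increasing chain of finite partial maps $g_0 \subseteq g_1 \subseteq \cdots$, each preserving $\Delta$ on its (finite) domain, ensuring at even stages that $a_n$ enters the domain and at odd stages that $b_n$ enters the range. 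Note that any finite $\Delta$-preserving map is automatically injective, since distinct points have finite $\Delta$-value.

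The heart of the matter is the one-point extension step, which I would phrase as: given a finite $\Delta$-preserving injection $p$ with domain $\{a_{i} : i < m\}$ and values $b_i' := p(a_i)$, and given a new point $a \in A$, there is $b \in B$ so that $p \cup \{(a,b)\}$ is still $\Delta$-preserving. To find $b$, let $j < m$ be an index maximizing $k := \Delta(a,a_j)$, so that $a \hook k = a_j \hook k$ and $a(k) \neq a_j(k)$. Call $i$ \emph{relevant} if $a_i \hook k = a_j \hook k$ (equivalently $\Delta(a_i,a_j) \geq k$); using the ultrametric inequality for $\Delta$ one checks that $\Delta(a,a_i) = k$ for relevant $i$ and $\Delta(a,a_i) = \Delta(a_j,a_i) < k$ for the rest. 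I would then set $b \hook k := b_j' \hook k$ and choose the value $b(k)$ so that $b(k) \neq b_i'(k)$ for every relevant $i$, finally extending $b \hook (k+1)$ to a point of $B$ by density of $B$ in the nonempty basic open $[b \hook (k+1)]$. A short computation, again via the ultrametric identity on the $b$-side, confirms $\Delta(b,b_i') = \Delta(a,a_i)$ for all $i$, and $b$ differs from every $b_i'$ automatically since these distances are finite.

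The step where I expect the real work, and the one that differs from the classical linear-order back-and-forth, is verifying that a suitable value $b(k)$ always exists, since for $X$ finite (in particular for $\cantor$) there are only $|X|$ choices at each coordinate. The key point is that isometry transfers the agreement pattern at level $k$: for relevant $i,i'$ one has $a_i(k) = a_{i'}(k)$ if and only if $\Delta(a_i,a_{i'}) \geq k+1$ if and only if $\Delta(b_i',b_{i'}') \geq k+1$ if and only if $b_i'(k) = b_{i'}'(k)$, so the relevant values $\{b_i'(k)\}$ realize exactly as many distinct elements of $X$ as the relevant values $\{a_i(k)\}$. Since $a(k)$ is itself a value of $X$ avoided by all relevant $a_i(k)$, at most $|X| - 1$ values must be avoided on the $b$-side, leaving at least one admissible choice for $b(k)$; in the Cantor case all relevant $b_i'(k)$ coincide and $b(k)$ is simply the opposite bit. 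The back direction (extending the range by a given $b \in B$) is identical with the roles of $A$ and $B$ swapped, and the union $g = \bigcup_n g_n$ is then a $\Delta$-preserving bijection of $A$ onto $B$, completing the argument via the lifting lemma.
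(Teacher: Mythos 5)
Your proposal is correct and takes essentially the same route as the paper's own proof: a back-and-forth construction of finite partial isometries between the enumerated sets $A$ and $B$, whose union is then lifted to a global isometry via Lemma \ref{lifting isometries}, with the crux being that a partial isometry transfers the agreement pattern at the critical coordinate $k$, so that even for finite $X$ a value for $b(k)$ avoiding all relevant $b_i'(k)$ exists because $a(k)$ already avoids all relevant $a_i(k)$. The only cosmetic difference is that the paper argues this counting step by contradiction (if every element of $X$ were realized on the image side, maximality of $k$ would fail), whereas you phrase it directly as a bijection between the realized value sets; the content is identical.
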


\begin{proof}
    This is a back and forth argument. Let $A, B \subseteq X^\omega$ be countable\footnote{We use here for the first time the general assumption that $X$ itself is countable - otherwise $X^\omega$ won't have a countable, dense subset.} and dense and enumerate them as $A = \{a_n\; | \; n < \omega\}$ and $B = \{b_n\; | \; n < \omega\}$. We inductively define isometries $\{f_n\}_{n < \omega}$ so that the following hold for all $n < \omega$.
    \begin{enumerate}
        \item ${\rm dom}({f_n}) \supseteq \{a_0, ..., a_n\}$ and ${\rm range}(f_n) \supseteq \{b_0, ..., b_n\}$.
        \item $f_n$ is an isometry in the sense that for each $a, a' \in {\rm dom}(f_n)$ and all $k < \omega$ we have that $a \hook k = a' \hook k$ if and only if $f(a) \hook k = f(a') \hook k$.
    \end{enumerate}

    Having accomplished this the desired isometry will be the lift of $\bigcup_{n < \omega} f_n$ obtained from Lemma \ref{lifting isometries}. Thus we just need to carry out the induction. The case of $n = 0$ is easy. We simply let $f_0 (a_0) = b_0$, with this being the unique value in the domain. Given $f_n$ satisfying (1) and (2) above we need to find $f_{n+1} \supseteq f_n$ with $a_{n+1}$ in its range and $b_{n+1}$ in its domain. Without loss of generality let us assume that $a_{n+1} \notin {\rm dom}(f_n)$ and $b_{n+1} \notin {\rm range}(f_n)$. If this is not true then we can ignore the ``forth" or ``back" subcases below respectively. We begin with the ``forth" case. Let $k < \omega$ be maximal so that there is an $a \in {\rm dom}(f_n)$ with $a_{n+1} \hook k = a \hook k$ - such exists since the domain of $f_n$ is finite. Let $i < \omega$ so that $a_i \in {\rm dom}(f_n)$ witnessing this $k$ (such an $i$ need not be unique - we will return to this point). Since $B$ is dense, there is a $b \in B$ so that $b \hook k = f(a_i) \hook k$. This is almost good enough but we need to choose $b$ a little more carefully. 

\begin{claim}
There is a $b \in B$ so that for all $j$ with $a_j \in {\rm dom}(f_n)$ if $a \hook k = a_j \hook k$ then $b \hook k = f_n(a_j) \hook k$ and $b(k) \neq f_n(a_j)(k)$.
\end{claim}

Here we return to the point that $a_i$ may not have been unique so that it agreed with $a_{n+1}$ for a maximal length. The claim states that we can find a single $b \in B$ which agrees with not only $f_n(a_i)$ for exactly the first $k$ bits but indeed every such $a_j$ which was maximal in this sense.

\begin{proof}[Proof of Claim]
For short, let us refer to any $a_j \in {\rm dom}(f_n)$ with $a \hook k = a_j \hook k$ as {\em maximally good}. Since $f_n$ is an isometry by hypothesis, we have that for all $a_j, a_l$ which are maximally good that $f_n(a_j) \hook k = f_n(a_l) \hook k$. Consider now the set of all $x \in X$ so that $f_n(a_j)(k) = x$ for some maximally good $a_j$. If this does not encompass all of $X$ then we can find a $y \in X$ so that for {\em no} maximally good $a_j$ do we have $f_n(a_j)(k) = y$ and $b$ can be chosen as any element of $B$ intersecting the open set $[f_n(a_j) \hook k^\frown y]$ (which again exists by density). Such a $b$ would witness the claim. Otherwise every $x \in X$ is such that $f_n(a_j)(k) = x$ for some maximally good $a_j$. In this case $X$ must be finite and, since $f_n$ is assumed to be an isometry we have that for every $x \in X$ there is a maximally good $a_j$ with $a_j(k) = x$. But this is a contradiction to the choice of $k$ since then $a_{n+1}(k) = a_j(k)$ for some maximally good $a_j$ so in particular $k$ was not maximal.
\end{proof}

Let us now fix a $b \in B$ as in the claim and check that $f_{n} \cup \{\langle a_{n+1}, b\rangle\}$ is an isometry. We need to show that for all $a_j \in {\rm dom}(f_n)$ and all $k$ we have $a_j \hook k = a_{n+1} \hook k$ if and only if $f_n(a_j) \hook k = f_n(a_{n+1}) \hook k$. There are two cases.  

    \noindent \underline{Case 1}: $a_j$ is not maximally good. Then there is a largest $l < k$ so that $a_j \hook l \neq a_{n+1} \hook l$. We have that for all $m < l$ that $a_j \hook m = a_{n+1} \hook m = a_i \hook m$ (where $a_i$ is the maximally good element we fixed in the beginning of the proof). But then, by the fact that $f_n$ is an isometry we have $f_n (a_j) \hook m = f_n (a_i) \hook m = b\hook m$ by the choice of $b$. However $b \hook l = f_n(a_i) \hook l\neq f_n(a_j)\hook l$ again since $f_n$ is an isometry and the choice of $b$. Putting all this together we get that $a\hook m = a_j\hook m$ if and only if $b \hook m = f_n(a_j)\hook j$, as needed.

    \noindent \underline{Case 2}: $a_j$ is maximally good. The case is somewhat similar but reduces to the choice of $b$.

    The ``back" part of the argument is entirely symmetric and left to the reader. 
    
\end{proof}

It is tempting to try and generalize the statement of Lemma \ref{CDHisom} to $\aleph_1$-dense sets, however this is flat out inconsistent. 
\begin{proposition}
    Let $\kappa \leq \cc$ be uncountable. There are two $\kappa$-dense subsets of $X^\omega$ so that no uncountable subset of either one can be isometrically mapped into the other. \label{counterexample}
\end{proposition}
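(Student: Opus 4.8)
The plan is to first strip away the metric and recast everything in terms of the \emph{first-difference function} $\Delta(x,y) = $ the least $k$ with $x(k) \neq y(k)$ (and $\Delta(x,x) = \infty$). Since $d(x,y) = 2^{-\Delta(x,y)}$, a map $f$ defined on some $C \subseteq X^\omega$ is an isometry into $X^\omega$ precisely when it preserves $\Delta$, i.e. $\Delta(f(x),f(y)) = \Delta(x,y)$ for all $x,y \in C$; this is exactly the ``if and only if'' form of the isometry condition recalled before Lemma \ref{homomorphism representation}, and it requires no surjectivity. Consequently an isometric copy of an uncountable $C \subseteq A$ inside $B$ is the same thing as an uncountable $\Delta$-preserving partial bijection from $A$ into $B$, and this notion is symmetric in $A$ and $B$. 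So it suffices to build two $\kappa$-dense sets $A,B \subseteq X^\omega$ admitting \emph{no} uncountable $\Delta$-preserving partial injection from a subset of one into the other; both clauses of the proposition then follow at once. (For $X^\omega = \cantor$ one can phrase the goal even more compactly: every such partial isometry extends to a surjective self-isometry $\sigma$ of $\cantor$, i.e. an automorphism of the binary tree $2^{<\omega}$, of which there are only $\mfc$; so the requirement becomes $|\sigma(A) \cap B| \le \aleph_0$ for every isometry $\sigma$ of $\cantor$.)

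To produce $A = \{a_\xi : \xi < \kappa\}$ and $B = \{b_\xi : \xi < \kappa\}$ I would run a recursion of length $\kappa$. Density is cheap: cycling through the countably many basic clopen sets $[s]$ cofinally often and always placing the next $a_\xi$ (resp.\ $b_\xi$) inside the prescribed $[s]$ guarantees that both sets meet every $[s]$ in a set of size $\kappa$, hence are $\kappa$-dense. The content is to choose the points so that the pair $(A,B)$ is \emph{entangled} with respect to $\Delta$: as each $a_\xi,b_\xi$ is added I would pin down its finitely many first-difference levels against the previously chosen points according to a fixed Sierpi\'nski-style coloring of $[\kappa]^2$ that records the interaction between the well-order given by the indices and the $\Delta$-values, arranged so that $A$ and $B$ realize deliberately incompatible patterns. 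Here the hypothesis $\kappa \le \mfc$ is what supplies enough room — concretely, an almost disjoint (or independent) family of size $\kappa$ — to carry out the diagonalization while keeping each new point inside the required clopen set.

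The hard part is the verification that this defeats \emph{all} uncountable $\Delta$-preserving maps, and for two reasons. First, the matching $\xi \mapsto g(\xi)$ underlying such a map is completely arbitrary: it need not respect the index well-order or the tree order, so a purely monotone coloring is not enough and one must guard against every ``flip pattern'' arising from the underlying tree automorphism. Second, one must secure the \emph{countable} bound, not merely ``$< \mfc$ many collisions,'' which is all a naive length-$\mfc$ recursion would give. Both issues should be handled by the standard entangled-set machinery, transported from the linear order to the $\Delta$-ultrametric: given a putative uncountable $\Delta$-iso one applies a $\Delta$-system / pressing-down argument — using crucially that each level of $X^{<\omega}$ through which the relevant points pass is countable — to extract an uncountable subfamily on which the configuration is rigidly uniform, i.e.\ realizes a single finite $\Delta$-pattern cofinally. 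That uniform pattern is exactly what the coloring was designed to forbid, yielding the contradiction. I expect this extraction step — building the ultrametric analogue of a (fully) entangled set that is robust against all tree-automorphism flips — to be the main technical obstacle.
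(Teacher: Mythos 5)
Your reduction to $\Delta$-preserving partial injections is correct, and you correctly identify the two obstacles (arbitrary matchings, and the need for a countable rather than ${<}\,\mfc$ bound). But the proof stops exactly where the work begins: the coloring is never defined, the ``deliberately incompatible patterns'' are never specified, and the entire verification is deferred to ``standard entangled-set machinery.'' That machinery is not available at the generality you need. Even in the linear-order setting it is borrowed from, the existence of an entangled set of size $\aleph_1$ is \emph{not} a theorem of $\ZFC$ (it fails under $\BA$, which is consistent), so no transport of that construction can establish the present proposition, which is a $\ZFC$ theorem for every uncountable $\kappa \leq \mfc$. Moreover, the length-of-recursion problem you raise is real and is left unresolved: a recursion of length $\kappa$ cannot diagonalize against the $\mfc$-many candidate partial isometries when $\kappa < \mfc$, and nothing in your sketch explains how local, finitary decisions about each new point $a_\xi, b_\xi$ control \emph{all} uncountable $\Delta$-preserving maps. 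As written, this is a plan whose core step is missing.

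The idea you are missing is that the ultrametric hands you a rigid invariant that makes recursion, colorings, and extra hypotheses unnecessary: an isometry preserves the level of first difference, hence in particular its \emph{parity}. Fix $0_X \in X$ and, for each $s \in X^{<\omega}$, choose a $\kappa$-sized set $O_s \subseteq [s]$ of points whose coordinates beyond $|s|$ differ from $0_X$ only at odd positions, and $E_s \subseteq [s]$ likewise with ``even'' (this is where $\kappa \leq \mfc$ is used: each $[s]$ contains continuum many such points). Then any two distinct points of a single $O_s$ first differ at a coordinate where one of them is nonzero, hence at a coordinate of one fixed parity, and any two points of a single $E_s$ first differ at a coordinate of the opposite parity. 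Put $O = \bigcup_{s} O_s$ and $E = \bigcup_{s} E_s$; both are $\kappa$-dense. Given an uncountable $Z \subseteq O$ and \emph{any} function $f\colon Z \to E$, two applications of pigeonhole across these countable unions yield distinct $x, y$ lying in a single $O_t$ with $f(x), f(y)$ lying in a single $E_s$; their first-difference levels then have different parities, so $d(x,y) \neq d(f(x),f(y))$ and $f$ is not an isometry. This explicit two-paragraph argument is the paper's proof; it needs no structure on the index set $\kappa$ at all, which is precisely why the statement holds in $\ZFC$ for every uncountable $\kappa \leq \mfc$.
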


\begin{proof}
     Fix a distinguished element $0_X \in X$. For each $s \in X^{<\omega}$ let $O_s$ be a $\kappa$ sized set of elements of $[s]$ so that for all $k \geq|s|$ if $x \in O_s$ is such that $x(k) \neq 0_X$ then $k$ is odd. Similarly let $E_s$ be the same with ``odd" in the previous paragraph replaced by ``even"\footnote{Here, $O$ is used for odd and $E$ for even.}. Note that for each $s \in X^{<\omega}$ if $x, y \in O_s$ then the first place that they differ is even. Similarly if $x, y \in E_s$ then the first place they differ is odd. Let $O = \bigcup_{s \in X^{<\omega}} O_s$ and $E = \bigcup_{s \in X^{<\omega}} E_s$. Clearly these are $\kappa$-dense. Suppose that $A \subseteq O$ is uncountable (the case of $A \subseteq E$ is symmetric). If $f:A \to E$ is any function at all there is some $s \in \omega^{<\omega}$ by pigeonhole so that $f^{-1}[E_s]$ is uncountable. Applying pigeonhole again, there is a $t \in \omega^{<\omega}$ so that $f^{-1}[E_s] \cap O_t$ is uncountable and in particular has size $2$. But if $x, y \in O_t$ then the first $k$ at which they differ is odd while if $f(x), f(y) \in E_s$ then the first place they differ is even hence $f$ is not an isometry. 
\end{proof}

We note that the above shows that any function between $E$ and $O$ as defined above is not only not an isometry when restricted to any uncountable set but moreover is {\em nowhere an isometry} in the sense that it cannot be an isometry on open set. We will not need it but also just note that these facts will remain true in any forcing extension that doesn't collapse $\omega_1$. Finally the following fact will be useful.
\begin{lemma} 
    Let $f:2^\omega \to 2^\omega$ be Lipschitz. The following are equivalent.

    \begin{enumerate}
        \item $f$ is surjective.
        \item $f$ is an isometry.
    \end{enumerate}

    In particular Lipschitz maps on Cantor space are isometries if and only if they are homeomorphisms. \label{nowhereisometriesareneversurjective}
\end{lemma}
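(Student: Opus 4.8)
The plan is to route everything through the homomorphism representation of Lemma~\ref{homomorphism representation} and reduce the statement to a counting argument on the finite levels of $2^{<\omega}$. Let $\tilde f : 2^{<\omega} \to 2^{<\omega}$ be the level- and order-preserving homomorphism attached to $f$, so that $f(x)\upharpoonright k = \tilde f(x\upharpoonright k)$ for all $x \in 2^\omega$ and all $k$. Since $\tilde f$ is level preserving it restricts, for each $k$, to a map $\tilde f_k := \tilde f \upharpoonright 2^k : 2^k \to 2^k$ of the finite set $2^k$ of size $2^k$ to itself. This ``self-map of a finite set'' structure is the crux of the argument, and is precisely what breaks down over $\baire$, where the levels $\omega^k$ are infinite (cf.\ the isometry $f(x)(n) = x(n)+1$ of the Remark).

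First I would record two translations. Unwinding the definition of isometry, $f$ is an isometry exactly when, for every $k$ and all $s,t \in 2^k$, $\tilde f(s) = \tilde f(t)$ implies $s = t$; that is, \emph{$f$ is an isometry iff every $\tilde f_k$ is injective} (the forward implication in the ``iff'' of the definition is just the Lipschitz property and is automatic). Dually, I claim \emph{$f$ is surjective iff every $\tilde f_k$ is surjective}. The forward direction is immediate: if $t \in 2^k$ and $y = f(x) \in [t]$, then $\tilde f(x\upharpoonright k) = t$. For the converse, suppose each $\tilde f_k$ is onto and fix $y \in 2^\omega$; the sets $P_k := \{ s \in 2^k : \tilde f(s) = y\upharpoonright k\}$ are then nonempty, and order- and level-preservation give $s\upharpoonright k \in P_k$ whenever $s \in P_{k+1}$, so $\bigcup_k P_k$ is an infinite, finitely branching subtree of $2^{<\omega}$. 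By K\"onig's lemma (equivalently, compactness of $2^\omega$) it has a branch $x$, and then $f(x) \upharpoonright k = \tilde f(x\upharpoonright k) = y\upharpoonright k$ for all $k$, i.e.\ $f(x) = y$.

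With both translations established the equivalence is immediate: each $\tilde f_k$ is a self-map of the finite set $2^k$, so it is injective if and only if it is surjective; hence $f$ is an isometry iff $f$ is surjective. For the concluding remark, an isometry is automatically injective (it preserves the first place of disagreement, hence the metric), so a Lipschitz isometry is, by the equivalence just proved, a continuous bijection from the compact space $2^\omega$ onto the Hausdorff space $2^\omega$, and therefore a homeomorphism; conversely a homeomorphism is in particular surjective and hence an isometry.

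I expect the only delicate step to be the converse half of the second translation --- lifting level-wise surjectivity of the $\tilde f_k$ to genuine surjectivity of $f$ --- which is where the finiteness of the levels is genuinely used, in the guise of K\"onig's lemma / compactness of Cantor space. Everything else is bookkeeping with the homomorphism representation.
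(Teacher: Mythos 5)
Your proof is correct and takes essentially the same approach as the paper's: both route the argument through the homomorphism representation of Lemma \ref{homomorphism representation}, reduce the statement to the restrictions $\tilde f \hook 2^k$, and invoke the pigeonhole fact that a self-map of a finite set is injective if and only if it is surjective. The only cosmetic difference is where compactness of $2^\omega$ enters --- the paper uses that the image of $f$ is compact, hence closed, so failure of surjectivity is witnessed by a missed basic clopen set, while you run K\"onig's lemma on the preimage tree; these are equivalent forms of the same fact.
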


Note that the last line follows from the fact that isometries are always injective and continuous bijections between compact Hausdorff spaces are always homeomorphisms. We remark that the above holds not just for $\cantor$ but in fact any space of the form $\Pi_{n < \omega} f(n)$ for any give $f:\omega \to \omega$ - i.e. the set of possibilities for each coordinate must just be finite. 

\begin{proof}
    Fix $f:2^\omega \to 2^\omega$, Lipschitz. We begin with $(1)$ implies $(2)$. Suppose $f$ is surjective and, towards a contradiction assume there are $x, y \in 2^\omega$ and $k \in \omega$ so that $x(k) \neq y(k)$ but $f(x) \hook k+1 = f(y) \hook k+1 := t$. In other words $x$ and $y$ witness that $f$ is not an isometry as $f$ maps them properly closer together. In this case then the map $\tilde{f}$ from Lemma \ref{homomorphism representation} maps both $x \hook k+1$ and $y \hook k+1$ (which are assumed to be distinct) onto this same $t$. This implies however that $\tilde{f} \hook 2^{k+1}$ is not injective and hence not surjective (since the set is finite) and therefore ${\rm range}(f)$ is disjoint from some basic open $[s]$ with $s \in 2^{k+1}$ and in particular is not surjective. 

    Now we prove $\neg (1)$ implies $\neg (2)$. Since $f$ is continuous, the image of $f$ is compact and hence closed in $2^\omega$. As such, if $f$ is not surjective, then there is some $k < \omega$ and some finite sequence $s \in 2^k$ so that ${\rm range} (f) \cap [s] = \emptyset$. It follows then that the map $\tilde{f} \hook 2^k$ is not a surjection and hence there are distinct sequences $t_0, t_1 \in 2^k$ so that $\tilde{f}(t_0) = \tilde{f}(t_1)$. But then if $x_0 \supseteq t_0$ and $x_1 \supseteq t_1$ we have that $d(x, y) \geq \frac{1}{2^k}$ while $d(f(x), f(y)) < \frac{1}{2^k}$ so $f$ is not an isometry.
\end{proof}

\begin{remark}
    The above holds first of all for any finite $X$ (and not just $2$). Moreover it holds when restricted to any open set and therefore we can actually show that if $f:\cantor \to \cantor$ is Lipschitz and nowhere an isometry then the image of $f$ is closed nowhere dense.
\end{remark}

\section{The Lipschitz Baumgartner Axioms}

We now introduce the axioms we will study for the rest of the paper. There are four - two for Baire space and two for Cantor space. Here is the first pair, the ``basic Lipschitz $\BA$".

\begin{definition} \label{BAlipDef1}
    Let $Z$ be either $\baire$ or $\cantor$. The axiom $\BA_{\rm Lip}(Z)$ states that for all $A, B \subseteq Z$ which are $\aleph_1$-dense there is a Lipschitz $f:Z \to Z$ so that $f$ injects $A$ into $B$. 
\end{definition}

The proof of the consistency of these axioms gives something slightly stronger and these stronger versions turn out to be useful.
\begin{definition} \label{BAlipDef2}
    The {\em strong Lipschitz Baumgartner axiom for} $\baire$, denoted $\overline{\BA}_{\rm Lip}(\baire)$ is the statement that for all $A, B \subseteq \baire$ which are $\aleph_1$-dense there is a Lipschitz $f:\baire \to \baire$ so that $f$ bijects $A$ onto $B$. 
\end{definition}
By Lemma \ref{nowhereisometriesareneversurjective} the above statement with $\baire$ replaced by $2^\omega$ is in fact inconsistent by Lemma \ref{nowhereisometriesareneversurjective} applied to the counterexample from Proposition \ref{counterexample}. However we have the following slightly weaker version.
\begin{definition}
    The {\em strong Lipschitz Baumgartner axiom for} $\cantor$, denoted $\overline{\BA}_{\rm Lip}(\cantor)$ is the statement that for all $A, B \subseteq \cantor$ which are $\aleph_1$-dense there are countably many Lipschitz $f_n:\cantor \to \cantor$, $n < \omega$ so that $B = \bigcup_{n < \omega} f_n``A$. 
\end{definition}
In the next section we will prove these axioms are consistent. For now we delay this though and study some basic consequences which we hope will give the reader a feel for the axioms and will be useful in proving the consistency. The first of these is that there are provable implications between them the four axioms - note the corresponding implications in the case of the classical Baumgartner axioms is unknown.

\begin{theorem} \label{implications}
 The following implications are provable in $\ZFC$.
 \begin{enumerate}
     \item $\BA_{\rm Lip}(\baire)$ implies $\BA_{\rm Lip}(\cantor)$.
     \item $\overline{\BA}_{\rm Lip}(\baire)$ implies $\overline{\BA}_{\rm Lip}(\cantor)$.
 \end{enumerate}
\end{theorem}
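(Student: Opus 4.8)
The plan is to transport the problem from $\cantor$ to $\baire$ through two fixed Lipschitz maps and then push the resulting function(s) back. Let $\iota\colon \cantor \hookrightarrow \baire$ be the inclusion (an isometry, hence Lipschitz), and let $\phi\colon \baire \to \cantor$ be the block-coding map sending $x$ to $0^{x(0)}{}^\frown 1{}^\frown 0^{x(1)}{}^\frown 1{}^\frown\cdots$. One checks directly that $\phi$ is Lipschitz (agreement of $x,y$ on their first $k$ coordinates forces agreement of $\phi(x),\phi(y)$ on their first $k+\sum_{i<k}x(i)\ge k$ bits), that $\phi$ is injective with image the comeager set $C=\{z\in\cantor : z \text{ has infinitely many }1\text{s}\}$, and that $\phi$ carries each basic open $[s]$, $s\in\omega^{<\omega}$, onto the relatively open set $[w_s]\cap C$, where $w_s\in 2^{<\omega}$ is the code of $s$. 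The feature I will lean on repeatedly is that $\cantor\setminus C$ is \emph{countable}.

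For part (1), fix $\aleph_1$-dense $A,B\subseteq\cantor$. First I would produce $\aleph_1$-dense sets in $\baire$: take $A'=\iota[A]\cup D$ for a fixed $\aleph_1$-dense $D\subseteq\baire$ (so $A'\supseteq\iota[A]$ is $\aleph_1$-dense, since $D$ forces $\ge\aleph_1$ points in each open while $|A'|=\aleph_1$ caps it), and take $B'=\phi^{-1}[B\cap C]$. The set $B'$ is $\aleph_1$-dense in $\baire$: as $C$ has countable complement, $B\cap C$ is still $\aleph_1$-dense in $\cantor$, and because $\phi[[s]]=[w_s]\cap C$ the intersection $B'\cap[s]$ is the $\phi$-preimage of $(B\cap C)\cap[w_s]$, of size $\aleph_1$, while $|B'|=|B\cap C|=\aleph_1$ bounds it above. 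Now apply $\BA_{\rm Lip}(\baire)$ to get Lipschitz $g\colon\baire\to\baire$ injecting $A'$ into $B'$, and set $f=\phi\circ g\circ\iota\colon\cantor\to\cantor$. This $f$ is Lipschitz as a composition of Lipschitz maps; for $a\in A$ we have $g(\iota(a))\in B'\subseteq\phi^{-1}[B]$, hence $f(a)\in B$; and $f\hook A$ is injective because $\iota$ and $\phi$ are injective and $g$ is injective on $A'\supseteq\iota[A]$. This is exactly $\BA_{\rm Lip}(\cantor)$.

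For part (2) the same maps work, but now I must \emph{cover} all of $B$, so I use the bijection strength of $\overline{\BA}_{\rm Lip}(\baire)$ together with the countably many functions allowed on the $\cantor$ side. The key move is to present an $\aleph_1$-dense $A'\subseteq\baire$ as a countable union of Lipschitz images of $A$: for $s\in\omega^{<\omega}$ let $h_s\colon\cantor\to\baire$ be $h_s(y)=s{}^\frown\iota(y)$ (Lipschitz and injective), and put $A'=\bigcup_{s\in\omega^{<\omega}}h_s[A]$. Since $h_u[A]\subseteq[u]$ has size $\aleph_1$ for each $u$ and $|A'|\le\aleph_0\cdot\aleph_1=\aleph_1$, the set $A'$ is $\aleph_1$-dense. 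Keeping $B'=\phi^{-1}[B\cap C]$ and applying $\overline{\BA}_{\rm Lip}(\baire)$ yields a Lipschitz bijection $g$ of $A'$ onto $B'$. Then for each $s$ the map $f_s=\phi\circ g\circ h_s\colon\cantor\to\cantor$ is Lipschitz with $f_s[A]\subseteq\phi[B']=B\cap C$, and $\bigcup_s f_s[A]=\phi[g[A']]=\phi[B']=B\cap C$ since $g[A']=B'$. Finally, to pick up the countably many points of $B\setminus C$, add the constant maps $c_y\colon\cantor\to\cantor$, $c_y\equiv y$, for $y\in B\setminus C$; these are Lipschitz with $c_y[A]=\{y\}$. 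The countable family $\{f_s\}\cup\{c_y\}$ then has union of images $(B\cap C)\cup(B\setminus C)=B$, giving $\overline{\BA}_{\rm Lip}(\cantor)$.

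The main obstacle, and the genuine difference between the two parts, is the covering requirement in (2): a single bijection $g$ only yields $g[A]$, which is far too small, so the decomposition $A'=\bigcup_s h_s[A]$ is what lets one bijection simultaneously spread countably many shifted copies of $A$ across all of $B'$. The second thing to watch is that $\phi$ is not onto $\cantor$ (its range is only $C$); this is exactly why $\overline{\BA}_{\rm Lip}(\cantor)$ is phrased with countably many functions rather than a single surjection, and the countability of $\cantor\setminus C$ is what makes the leftover points manageable by constant maps. Everything else reduces to verifying that the displayed compositions are Lipschitz and that the auxiliary sets are \emph{exactly} $\aleph_1$-dense, which in each case follows from the cardinality bound $|A'|,|B'|=\aleph_1$ capping the density from above.
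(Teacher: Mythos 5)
Your proof is correct, but it takes a genuinely different route from the paper's at the key technical step: how to transport $B \subseteq \cantor$ to an $\aleph_1$-dense $B' \subseteq \baire$. The paper uses the parity map $\pi(b)(k) = b(k) \bmod 2$, which is surjective onto $\cantor$ but badly non-injective, so it must construct a transversal $B' \subseteq \baire$ on which $\pi$ restricts to a bijection onto $B$; this is done by a transfinite recursion (choosing lifts that are lexicographically least of minimal order type), and verifying that the resulting $B'$ is $\aleph_1$-dense requires a Fodor/stationarity argument. You instead use the run-length coding injection $\phi:\baire \to \cantor$, whose image $C$ is co-countable in $\cantor$; injectivity makes $B' = \phi^{-1}[B \cap C]$ automatically $\aleph_1$-dense (via $\phi[[s]] = [w_s]\cap C$), so no recursion or stationarity argument is needed. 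The price is that $\phi$ is not surjective: in part (2) your maps $f_s$ only cover $B \cap C$, and you patch the countably many points of $B \setminus C$ with constant maps --- legitimate, since constant maps are Lipschitz and $\overline{\BA}_{\rm Lip}(\cantor)$ permits countably many functions (in part (1) non-surjectivity is harmless, as only an injection into $B$ is required). The two proofs share the other main idea of part (2), the decomposition $A' = \bigcup_{s\in\omega^{<\omega}} \{s^\frown a \; | \; a \in A\}$, which lets a single bijection onto $B'$ spread countably many Lipschitz images of $A$ across $B'$. Net comparison: your argument is more elementary and isolates cleanly why $\overline{\BA}_{\rm Lip}(\cantor)$ must be phrased with countably many functions rather than one surjection, while the paper's parity-map transversal produces an exact bijection $B' \to B$, so the covering of $B$ in (2) falls out directly without auxiliary constant maps, and the transversal construction is a reusable technique in its own right.
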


I do not know currently if any of the arrows can be reversed. Questions relating to this will be asked in the final section of this paper. The interest in Theorem \ref{implications} is that each $\baire$ version implies its corresponding $\cantor$ one. To the best of my knowledge this is the first example of a ``Baumgartner type axiom" for one space implying the same for another (in a non trivial sense). The figure below gives the full table of implications.
 \begin{figure}[h]\label{figure}
\centering
  \begin{tikzpicture}[scale=1.5,xscale=2]
     \draw (0,1) node (BASB) {$\overline{\BA}_{\rm Lip}(\baire)$}
           
	(1,0) node (BASC){$\overline{\BA}_{\rm Lip}(\cantor)$}
	(1,2) node (BAB){$\BA_{\rm Lip}(\baire)$}
	(2,1) node (BAC) {$\BA_{\rm Lip}(\cantor)$}
	
           ;
     \draw[->,>=stealth]
            (BASB) edge (BAB)
            (BASB) edge (BASC)
            (BASC) edge (BAC)
            (BAB) edge (BAC)
            
            ;
  \end{tikzpicture}
  
 \end{figure}


We prove each item of Theorem \ref{implications} separately as lemmas.

\begin{lemma}
    The axiom $\BA_{\rm Lip}(\baire)$ implies $\BA_{\rm Lip}(\cantor)$. \label{firstlemma}
\end{lemma}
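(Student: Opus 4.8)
The plan is to reduce an arbitrary instance of $\BA_{\rm Lip}(\cantor)$ to an instance of $\BA_{\rm Lip}(\baire)$ by combining the isometric inclusion $\iota : \cantor \hookrightarrow \baire$ with a canonical Lipschitz retraction. Define $r : \baire \to \cantor$ by $r(x)(n) = \min(x(n), 1)$. This map is Lipschitz (if $x \hook k = y \hook k$ then $r(x) \hook k = r(y) \hook k$), it fixes $\cantor$ pointwise, and crucially each fiber $r^{-1}(b)$ for $b \in \cantor$ meets every basic open $[s] \subseteq \baire$ with $r(s) \lhd b$ (take $x(n) = s(n)$ for $n < |s|$ and $x(n) = b(n)$ for $n \geq |s|$). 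Given $\aleph_1$-dense $A, B \subseteq \cantor$, I would build $\aleph_1$-dense sets $A', B' \subseteq \baire$, apply $\BA_{\rm Lip}(\baire)$ to obtain a Lipschitz $g : \baire \to \baire$ injecting $A'$ into $B'$, and then set $f := r \circ g \circ \iota : \cantor \to \cantor$. As a composition of Lipschitz functions, $f$ is Lipschitz.

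For the domain, take any $\aleph_1$-dense $A' \supseteq A$ in $\baire$; since $A$ is already $\aleph_1$-dense in $\cantor$, it meets every $[s]$ with $s \in 2^{<\omega}$ in exactly $\aleph_1$ points, so one only needs to throw $\aleph_1$ points into each of the countably many basic opens meeting $\baire \setminus \cantor$, keeping $|A'| = \aleph_1$. The delicate set is $B'$. Writing $P := \{x \in \baire : r(x) \in B\}$ for the preimage, I would demand that $B' \subseteq P$, that $B'$ be $\aleph_1$-dense in $\baire$, and that $r \hook B'$ be injective. The inclusion $B' \subseteq P$ guarantees $r``(g``A) \subseteq r``B' \subseteq B$, so that $f$ carries $A$ into $B$; the injectivity of $r \hook B'$ guarantees $f \hook A$ remains injective even after collapsing by $r$. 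To construct $B'$ I would run a transfinite recursion of length $\omega_1$ enumerating all density requirements (pairs of a basic open $[s] \subseteq \baire$ and an ordinal $<\omega_1$): at each stage I pick some $b \in B \cap [r(s)]$ whose fiber has not yet been used --- possible because $B \cap [r(s)]$ has size $\aleph_1$ while fewer than $\aleph_1$ fibers have so far been touched --- and adjoin to $B'$ a single point of $r^{-1}(b) \cap [s]$.

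The main obstacle, and the reason $B'$ must be a fiber selector inside $P$ rather than a fattening of $B$, is precisely the tension among the three demands on $B'$: one \emph{cannot} have $B \subseteq B' \subseteq P$ with $r \hook B'$ injective and $B'$ still $\aleph_1$-dense in $\baire$, since every point of $P$ lies in the fiber of some $b \in B$, so injectivity of $r$ together with $B \subseteq B'$ would force $B' = B$, which is nowhere dense in $\baire$. Dropping the demand $B \subseteq B'$ and instead selecting exactly one representative per fiber resolves the conflict. Once $g$ is obtained the verification is routine: for $a \in A \subseteq A'$ we have $g(a) \in B' \subseteq P$, hence $f(a) = r(g(a)) \in B$; and if $a_1 \neq a_2$ in $A$ then $g(a_1) \neq g(a_2)$ lie in $B'$, so $r(g(a_1)) \neq r(g(a_2))$ by injectivity of $r \hook B'$. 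Thus $f$ is a Lipschitz injection of $A$ into $B$, witnessing $\BA_{\rm Lip}(\cantor)$.
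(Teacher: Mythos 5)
Your proof is correct, and its overall architecture matches the paper's: extend $A$ to an $\aleph_1$-dense $A' \subseteq \baire$, lift $B$ to an $\aleph_1$-dense $B' \subseteq \baire$ that projects injectively into $B$ under a canonical Lipschitz surjection $\baire \to \cantor$, apply $\BA_{\rm Lip}(\baire)$ to $A', B'$, and compose. The differences lie in how the lift $B'$ is built, and they are instructive. The paper uses the parity map $\pi(x)(k) = x(k) \bmod 2$ where you use the truncation $r(x)(n) = \min(x(n),1)$; this choice is immaterial. More substantively, the paper demands that the lift map \emph{bijectively} onto $B$, and its recursion is indexed by an enumeration of $B$ (one preimage $b'_\gamma$ per $b_\gamma$, with a carefully chosen finite stem), so that $\aleph_1$-density of $B'$ becomes a nontrivial claim verified by a Fodor/stationarity argument. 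You instead demand only that $r \hook B'$ be injective with image in $B$, and you index the recursion by density requirements (pairs of a basic open $[s]$ and an ordinal $\alpha < \omega_1$), selecting one untouched fiber per requirement; density then holds by construction and no Fodor argument is needed. This weakening is exactly tailored to the lemma at hand: injectivity of the selector is all that is used to see that $f = r \circ g \circ \iota$ injects $A$ into $B$. What the paper's stronger, bijective lift buys is reuse: the identical claim is invoked in the proof that $\overline{\BA}_{\rm Lip}(\baire)$ implies $\overline{\BA}_{\rm Lip}(\cantor)$, where surjectivity of the projection onto $B$ is essential because the strong axiom must \emph{cover} $B$ by images of $A$. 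Your selector would need to be supplemented for that purpose (e.g.\ by interleaving surjectivity requirements, one per element of $B$, into the bookkeeping --- a routine modification, since a fiber over any prescribed $b \in B$ can be hit at any stage), but for the lemma as stated your argument is complete and somewhat simpler than the paper's.
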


\begin{proof}
    Assume $\BA_{\rm Lip}(\baire)$ and let $A, B \subseteq 2^\omega$ be $\aleph_1$-dense. Let $A' \subseteq \baire$ be $\aleph_1$-dense extending $A$ (which is possible since $A$ is subset of $\baire$ trivially since it is a subset of $2^\omega$). We want to apply $\BA_{\rm Lip}(\baire)$ to $A'$ and a $B'$ we will define below in terms of $B$. Towards this we need the following.

    \begin{claim}
        There is a $B' \subseteq \baire$ which is $\aleph_1$-dense and so that the parity map $\pi:B' \to B$ defined by $\pi(b)(k) = b(k) \, {\rm mod} \, 2$ is a bijection.
    \end{claim}

    \begin{proof}[Proof of Claim]
        In the proof it will be convenient to extend the parity map to elements of $\omega^{<\omega}$ in the obvious way i.e. for $s \in \omega^{<\omega}$ we let $\pi(s) (k) = s(k) \, {\rm mod} \, 2$ for each $k \in {\rm dom}(s)$. Let $B$ be enumerated as $\{b_i\; | \; i \in \omega_1\}$. Inductively define $\{b'_i\; | \; i \in\omega_1\}$ as follows:  first fix $b'_0 = b_0$. Now suppose that $\gamma < \omega_1$ and for all $\alpha < \gamma$ we have defined $b'_\alpha$ so that $\pi (b'_\alpha) = b_\alpha$. For each $s \in \omega^{<\omega}$ define the {\em order type} of $s$ to be the order type of the set of $\{\beta < \gamma\; | \; s \subseteq b'_\beta\}$. Let $s_\gamma \in \omega^{<\omega}$ be the lexicographic least of minimal order type so that $\pi(s_\gamma) \subseteq b_\gamma$. Now pick an $x \in \baire$ so that $\pi (x) = b_\gamma$. Let $b'_\gamma = s_\gamma {}^\frown x \hook [|s_\gamma|, \omega)$. Clearly $\pi (b'_\gamma) = b_\gamma$ and $b'_\gamma \neq b'_\alpha$ for any $\alpha < \gamma$ - since if $b'_\gamma = b'_\alpha$ then their images under $\pi$ would be the same, which is false by assumption. This ensures that $\pi:B' \to B$ is a bijection where $B' = \{b'_\gamma \; | \; \gamma \in \omega_1\}$. 

        It remains to check that $B'$ is $\aleph_1$-dense in $\baire$. Suppose not and let $s \in \omega^{<\omega}$ be lexicographic least so that $B' \cap [s]$ is countable. Let $\delta < \omega_1$ so that $s \nsubseteq b_\alpha$ for any countable $\alpha > \delta$. By $\aleph_1$-density of $B$ in $2^\omega$ there are $\aleph_1$-many $\gamma > \delta$ so that $\pi (s) \subseteq b_\gamma$ thus by the construction $s$ could not have been the $t$ which was lexicographic least of minimal order type with $\pi (t) \subseteq b_\gamma$. By Applying Fodor we get a stationary subset of this set of $\gamma$ for which the same $t$ was chosen. Moreover, this $t$ must have order type less than $\delta$ (since otherwise it could not be of minimal order type) but $t$ was chosen uncountably many times, which is a contradiction.

    \end{proof}

    Now let $f:A' \to B'$ be an injective Lipschitz function - the existence of such is guaranteed by $\BA_{\rm Lip}(\baire)$. Note that $\pi:\baire \to \cantor$ is also Lipschitz in the sense that if $x, y \in \baire$ and $k < \omega$ so that $x \hook k = y \hook k$ then $\pi (x) \hook k = \pi (y) \hook k$. Consider the restriction $\pi \circ f \hook A: A \to \pi (B')$. Since $\pi (B') = B$. This is a Lipschitz function from $A$ to $B$ which is moreover injective since $\pi$ is a bijection on $B'$. Thus this is the required map to witness $\BA_{\rm Lip}(2^\omega)$. 
\end{proof}

Similarly we have the following:
\begin{lemma}
    The axiom $\overline{\BA}_{\rm Lip}(\baire)$ implies $\overline{\BA}_{\rm Lip}(\cantor)$.
\end{lemma}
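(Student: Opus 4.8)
The plan is to follow the template of Lemma~\ref{firstlemma}, transferring everything from $\cantor$ to $\baire$ via the parity map $\pi$, the one genuinely new ingredient being that the \emph{surjectivity} demanded by the barred axiom forces me to use a countable family of maps rather than a single restriction. So fix $\aleph_1$-dense $A, B \subseteq \cantor$. Exactly as in the Claim inside the proof of Lemma~\ref{firstlemma}, I would first produce an $\aleph_1$-dense $B' \subseteq \baire$ for which the parity map $\pi:B' \to B$ is a bijection; this step is verbatim and needs no modification.

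The new point is how to build the domain side so as to recapture \emph{all} of $B$. For each $t \in \omega^{<\omega}$ let $\iota_t:\cantor \to \baire$ be the concatenation map $\iota_t(z) = t^\frown z$; each $\iota_t$ is Lipschitz (indeed a contraction) since if $z \hook k = z' \hook k$ then $\iota_t(z)$ and $\iota_t(z')$ agree up to coordinate $|t|+k \geq k$, so in particular $\iota_t(z) \hook k = \iota_t(z') \hook k$. I then set $A' := \bigcup_{t \in \omega^{<\omega}} \iota_t(A)$, a union of countably many sets each of size $\aleph_1$, hence of size $\aleph_1$. The key observation is that $A'$ is $\aleph_1$-dense in $\baire$: given a basic open $[s]$ with $s \in \omega^{<\omega}$, the set $\iota_s(A) = s^\frown A$ is an $\aleph_1$-sized subset of $[s] \cap A'$, while $|A'| = \aleph_1$ bounds it from above, so $[s]\cap A'$ has size exactly $\aleph_1$. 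With $A'$ and $B'$ both $\aleph_1$-dense, I apply $\overline{\BA}_{\rm Lip}(\baire)$ to obtain a Lipschitz $g:\baire \to \baire$ bijecting $A'$ onto $B'$.

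Finally I would define $f_t := \pi \circ g \circ \iota_t : \cantor \to \cantor$ for $t \in \omega^{<\omega}$; these are countably many Lipschitz maps, being compositions of Lipschitz maps (using that $\pi:\baire \to \cantor$ is Lipschitz as already noted in the proof of Lemma~\ref{firstlemma}). Since taking images commutes with unions, and $g(A') = B'$, $\pi(B') = B$, I get $\bigcup_t f_t(A) = \pi\big(g\big(\bigcup_t \iota_t(A)\big)\big) = \pi(g(A')) = \pi(B') = B$, while each $f_t(A) = \pi(g(\iota_t(A))) \subseteq \pi(g(A')) = B$. Hence $\{f_t : t \in \omega^{<\omega}\}$ is a countable family of Lipschitz self-maps of $\cantor$ with $B = \bigcup_t f_t(A)$, witnessing $\overline{\BA}_{\rm Lip}(\cantor)$.

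The only delicate point I expect is the verification that $A'$ is $\aleph_1$-dense in $\baire$ and not merely topologically dense, since $A'$ is a countable union of Lipschitz images of a set living inside the compact space $\cantor$; but this is precisely what the explicit concatenation family $\{\iota_t\}$ buys, as each basic open $[s]$ already absorbs the full $\aleph_1$-sized copy $s^\frown A$. Everything else reduces to a routine diagram chase together with a citation of the parity Claim, so once the family $\{\iota_t\}$ is in place no serious obstacle remains.
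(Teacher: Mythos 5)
Your proposal is correct and follows essentially the same route as the paper's proof: the same parity-map claim produces $B'$, your concatenation maps $\iota_t$ are exactly the paper's $g_s$, the set $A' = \bigcup_t \iota_t(A)$ is the same, and the witnessing family $f_t = \pi \circ g \circ \iota_t$ coincides with the paper's $f_s = \pi \circ f \circ g_s$. No gaps; the $\aleph_1$-density check for $A'$ and the covering computation are exactly as in the paper.
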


\begin{proof}
    Assume $\overline{\BA}_{\rm Lip}(\baire)$. As in the proof of Lemma \ref{firstlemma} let $A, B \subseteq 2^\omega$. Let $B'$ be defined exactly as in the proof of Lemma \ref{firstlemma} - i.e. $B' \subseteq \baire$ is $\aleph_1$-dense and $\pi:B' \to B$ is a bijection. For each $s \in {\omega}^{<\omega}$ let $A_s = \{s^\frown a\; | \; a \in A\}$. Clearly $A' = \bigcup_{s \in \omega^{<\omega}} A_s$ is $\aleph_1$-dense. Applying $\overline{\BA}_{\rm Lip}(\baire)$, fix $f:A' \to B'$ a Lipschitz bijection. For each $s \in \omega^{<\omega}$ let $g_s:A \to A_s$ be the obvious map i.e. $g_s (a) = s^\frown a$. Note that for each $s \in \omega^{<\omega}$ the map $g_s$ is Lipschitz since if $a \hook k = b \hook k$ then $g_s (a) \hook k + |s| = b \hook k + |s|$. Finally let $f_s:A \to B$ be $\pi \circ f \circ g_s$. Again this is a composition of Lipschitz functions hence Lipschitz. To complete the proof we just have to see that $\bigcup_{s \in \omega^{<\omega}} f_s``A$ covers $B$. Let $b \in B$. Then $b' = \pi^{-1}(b)$ is equal to $f(a')$ for some $a' \in A'$ since $f$ was assumed to be surjective. But now $a' = s^\frown a$ for some $s$ and $a \in A$ by definition and therefore $f_s (a) = b$ as needed. 
\end{proof}

We note that the proof of the above clearly generalizes in the obvious way.
\begin{theorem}
   $\BA_{\rm Lip} (\baire)$ implies that for all $f:\omega \to \omega$ we have $\BA_{\rm Lip}(\Pi_{n < \omega} f(n))$ and $\overline{\BA}_{\rm Lip}(\baire)$ implies $\overline{\BA}_{\rm Lip}(\Pi_{n < \omega} f(n))$ where these axioms are defined in analogue to the case $f(n):= 2$ for all $n < \omega$. 
\end{theorem}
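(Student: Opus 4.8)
The plan is to run the arguments of Lemma \ref{firstlemma} and of its strong counterpart verbatim, with the parity map $\pi$ replaced by the analogous coordinatewise reduction onto $\Pi_{n<\omega}f(n)$. Concretely, I would define $\pi_f:\baire \to \Pi_{n<\omega}f(n)$ by $\pi_f(x)(k) = x(k) \bmod f(k)$, extended to finite sequences by $\pi_f(s)(k) = s(k)\bmod f(k)$ for $k \in {\rm dom}(s)$ (assuming throughout that $f(k)\geq 1$, so that the product is a genuine space; the case $f(k)=2$ for all $k$ recovers $\pi$ and $\cantor$). Exactly as for $\pi$, if $x\hook k = y \hook k$ then $\pi_f(x)\hook k = \pi_f(y)\hook k$, so $\pi_f$ is Lipschitz, and it is plainly surjective.

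Next I would isolate the single structural feature of the parity map that the construction of $B'$ actually uses: for each coordinate $k$ and each target value $c < f(k)$, the fiber $\{m < \omega : m \equiv c \pmod{f(k)}\}$ is infinite. This is immediate from $f(k)\geq 1$, and it is precisely what makes each fiber $\pi_f^{-1}(b)$ homeomorphic to $\baire$, so that there is enough room to spread the preimages out. With this in hand the Claim in the proof of Lemma \ref{firstlemma} carries over word for word: given $\aleph_1$-dense $A,B\subseteq\Pi_{n<\omega}f(n)$, the same transfinite recursion (pick $s_\gamma$ lexicographically least of minimal order type with $\pi_f(s_\gamma)\subseteq b_\gamma$, then extend $s_\gamma$ into the fiber by some $x$ with $\pi_f(x)=b_\gamma$) produces an $\aleph_1$-dense $B'\subseteq\baire$ with $\pi_f\hook B':B'\to B$ a bijection, and the $\aleph_1$-density of $B'$ follows from the identical Fodor argument, whose only input was the infinitude of the fibers.

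The two implications then follow as before. For $\BA_{\rm Lip}$ I would extend $A$ to an $\aleph_1$-dense $A'\subseteq\baire$, apply $\BA_{\rm Lip}(\baire)$ to get an injective Lipschitz $h:A'\to B'$, and note that $\pi_f\circ h\hook A:A\to B$ is Lipschitz and injective (the latter because $\pi_f\hook B'$ is a bijection). For $\overline{\BA}_{\rm Lip}$ I would set $A_s=\{s^\frown a : a\in A\}$ and $A'=\bigcup_{s\in\omega^{<\omega}}A_s$, apply $\overline{\BA}_{\rm Lip}(\baire)$ to get a Lipschitz bijection $h:A'\to B'$, and for each $s$ put $h_s=\pi_f\circ h\circ g_s$ with $g_s(a)=s^\frown a$; each $h_s$ is Lipschitz on the dense set $A$ and so lifts to a total Lipschitz map on $\Pi_{n<\omega}f(n)$ by Lemma \ref{lifting lipschitz}, while surjectivity of $h$ together with $\pi_f\hook B'$ being a bijection gives $B=\bigcup_s h_s``A$. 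I do not expect a genuine obstacle here: the whole content is the observation in the second paragraph that the one relevant property of the parity map, namely infinite fibers, survives the passage from modulus $2$ to an arbitrary modulus $f(k)$, after which both recursions and both liftings are unchanged.
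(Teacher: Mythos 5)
Your proposal is correct and is exactly the argument the paper intends: the paper offers no separate proof, merely noting that the proofs of Theorem \ref{implications} ``clearly generalize in the obvious way,'' and your replacement of the parity map by $\pi_f(x)(k)=x(k)\bmod f(k)$, together with the observation that the only property used is the infinitude of each fiber, is precisely that generalization. Both recursions, the Fodor density argument, and the liftings via Lemma \ref{lifting lipschitz} go through as you describe.
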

In fact it is more generally true that if $f, g \in (\omega + 1)^\omega$ with $f$ everywhere dominated by $g$ then $\BA_{\rm Lip}(\Pi_{n < \omega} g(n))$ implies $\BA_{\rm Lip}(\Pi_{n < \omega} f(n))$ where these are defined in the obvious way.

Moving on to some other applications, recall the following, which was also mentioned in the introduction:

\begin{fact}
    Assume $\BA$.
    \begin{enumerate}
        \item (Abraham-Rubin-Shelah \cite[Theorem 7.1]{ARS85}) $\cc = 2^{\aleph_1}$.

        \item (Todor\v{c}evi\'{c}, see \cite{Todorcevic89}) $\mfb > \aleph_1$
    \end{enumerate}
\end{fact}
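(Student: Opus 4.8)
The plan is to treat the two items separately, since they need quite different inputs: item (1) admits a short self-contained counting argument, while item (2) is the genuinely deep statement (Todor\v{c}evi\'{c}'s theorem) that I would ultimately quote rather than reprove. For item (1) the strategy is to pin down the number of $\aleph_1$-dense subsets of $\mathbb R$ from both sides. For the lower bound, a routine construction gives at least $2^{\aleph_1}$ of them: fix one $\aleph_1$-dense set $P \subseteq \mathbb R$ together with $\aleph_1$ many distinct reals $\{q_\alpha \mid \alpha < \omega_1\}$ lying outside $P$, and for each $S \subseteq \omega_1$ put $B_S = P \cup \{q_\alpha \mid \alpha \in S\}$. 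Each $B_S$ is $\aleph_1$-dense because it contains the $\aleph_1$-dense set $P$, and distinct $S$ clearly give distinct $B_S$, so there are $2^{\aleph_1}$ such sets.

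For the upper bound I would fix a single $\aleph_1$-dense set $A$ and a countable \emph{order-dense} $D \subseteq A$ (between any two points of $A$ there is a point of $D$; such $D$ exists by choosing one point of $A$ in each rational interval meeting $A$). Under $\BA$ every $\aleph_1$-dense $B$ is order-isomorphic to $A$, hence is the range of some order-embedding $f : A \to \mathbb R$; thus it suffices to bound the number of such embeddings by $\cc$. The key observation is that $f$ is essentially determined by $f \hook D$: for $a \in A$ the value $f(a)$ is confined to the interval $[\sup f(L_a), \inf f(U_a)]$, where $L_a, U_a$ are the lower and upper halves of the cut of $D$ determined by $a$. Order-density of $D$ forces the open intervals $(\sup f(L_a), \inf f(U_a))$ to be pairwise disjoint as $a$ ranges over $A$ (for $a < a'$ some $d \in D$ lies strictly between, giving $\inf f(U_a) \leq f(d) \leq \sup f(L_{a'})$), so only countably many of them are nondegenerate. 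Therefore $f$ is determined by $f \hook D$ together with countably many further point-choices, yielding at most $\cc^{\aleph_0} \cdot \cc^{\aleph_0} = \cc$ embeddings, hence at most $\cc$ many $\aleph_1$-dense $B$. Combining the two bounds gives $2^{\aleph_1} \leq \cc$, and with the trivial $\cc \leq 2^{\aleph_1}$ we conclude $\cc = 2^{\aleph_1}$.

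For item (2) the plan is a proof by contradiction: assume $\mfb = \aleph_1$ and fix a $\leq^*$-increasing, unbounded sequence $\langle f_\alpha \mid \alpha < \omega_1 \rangle$ in $\baire$. Following Todor\v{c}evi\'{c}, one codes this sequence into the order structure of $\aleph_1$-dense sets of reals, converting the $\leq^*$-behaviour of the $f_\alpha$ into an oscillation-type coloring of pairs, in such a way that two appropriately chosen $\aleph_1$-dense sets cannot be order-isomorphic: any isomorphism would, through the induced oscillation, manufacture a single function eventually dominating all the $f_\alpha$, contradicting unboundedness of the sequence. Since this would exhibit two non-isomorphic $\aleph_1$-dense sets of reals, it refutes $\BA$, and so $\mfb = \aleph_1$ is impossible.

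The hard part is item (2). The counting argument for (1) is elementary and I would present it in full, but encoding an unbounded $\omega_1$-sequence into order types so that isomorphism is provably obstructed requires the full strength of Todor\v{c}evi\'{c}'s oscillation and walks machinery; I would not reconstruct this and instead cite \cite{Todorcevic89}.
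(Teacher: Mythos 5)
You should know that the paper itself offers no proof here: this Fact is stated purely by citation (Abraham--Rubin--Shelah for item (1), Todor\v{c}evi\'{c} for item (2)), so the relevant comparison is between your argument and the cited literature rather than anything in the text. For item (2) you do exactly what the paper does --- defer to \cite{Todorcevic89} --- and that is the right call; your sketch of the oscillation idea is roughly faithful in spirit but is not (and does not pretend to be) a proof. For item (1) you reconstruct what is essentially the Abraham--Rubin--Shelah counting argument, and it is correct: there are at least $2^{\aleph_1}$ many $\aleph_1$-dense subsets of $\mathbb{R}$; under $\BA$ each of them is the range of an order-embedding of one fixed $\aleph_1$-dense set $A$ into $\mathbb{R}$; and the number of such embeddings is at most $\cc$, since each is determined by its restriction to a countable order-dense $D \subseteq A$ together with countably many additional values --- your disjoint-open-intervals argument is the right mechanism, noting that when the interval $(\sup f(L_a), \inf f(U_a))$ is degenerate the constraint $\sup f(L_a) \le f(a) \le \inf f(U_a)$ pins $f(a)$ down uniquely, a point you use implicitly. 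Two small repairs are needed. First, the lower bound requires $\aleph_1$ many reals outside $P$, which can fail for a careless choice of $P$ (under $\CH$ the set $\mathbb{R}$ itself is $\aleph_1$-dense, and one cannot invoke $\cc > \aleph_1$ without circularity through item (2)); instead split one $\aleph_1$-dense set into two disjoint $\aleph_1$-dense pieces and take $P$ and the points $q_\alpha$ from different pieces. Second, your count $\cc^{\aleph_0} \cdot \cc^{\aleph_0}$ should also include a factor for choosing \emph{which} countable set of points of $A$ receives the extra values, i.e. a factor $\aleph_1^{\aleph_0}$; since $\aleph_1 \le \cc$ this is still at most $\cc$, so the bound $2^{\aleph_1} \le \cc \le 2^{\aleph_1}$ stands. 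With those adjustments your item (1) is a complete, self-contained proof of something the paper only quotes.
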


The arguments of \cite[Section 3]{weakBA} easily show that both items are consequences of $\overline{\BA}_{\rm Lip}(\cantor)$. The second of these is in fact a consequence of $\BA_{\rm Lip}(2^\omega)$ - a fact we prove below. We conjecture the first is as well but can only prove this with slightly more assumptions. 

\begin{lemma} \label{cardinal arithmetic}
    If $\cc < \aleph_{\omega_1}$ then the axiom $\BA_{\rm Lip}(2^\omega)$ implies $\cc = 2^{\aleph_1}$.
\end{lemma}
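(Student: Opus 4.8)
The plan is to split $\cc = 2^{\aleph_1}$ into its two inequalities. Since $\cc = 2^{\aleph_0}\le 2^{\aleph_1}$ holds in $\ZFC$, the entire content is to establish $2^{\aleph_1}\le\cc$. As a preliminary (pure $\ZFC$) count, I would check that there are exactly $2^{\aleph_1}$ many $\aleph_1$-dense subsets of $\cantor$: the upper bound is $|\cantor|^{\aleph_1}=\cc^{\aleph_1}=2^{\aleph_1}$, and for the lower bound I fix a partition of one $\aleph_1$-dense set into two $\aleph_1$-dense pieces $E\sqcup\{d_\alpha:\alpha<\omega_1\}$ and observe that the sets $A_S=E\cup\{d_\alpha:\alpha\in S\}$, for $S\subseteq\omega_1$, are pairwise distinct and each $\aleph_1$-dense (as each contains $E$). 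Thus the whole problem is to use $\BA_{\rm Lip}(\cantor)$ to bound the number of $\aleph_1$-dense sets by $\cc$.

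It is instructive to first recall the template I am trying to approximate, namely the count for the strong covering axiom. Under $\overline{\BA}_{\rm Lip}(\cantor)$, fixing a single $\aleph_1$-dense $A_0$, every $\aleph_1$-dense $B$ has the form $B=\bigcup_{n<\omega}f_n``A_0$ for some countable sequence $(f_n)_{n<\omega}$ of Lipschitz maps. By Lemma \ref{homomorphism representation} each such map is coded by a homomorphism of the countable tree, so there are only $\cc$ Lipschitz maps and only $\cc^{\aleph_0}=\cc$ countable sequences of them; as the sequence \emph{determines} $B$, there are at most $\cc$ many $\aleph_1$-dense sets, giving $2^{\aleph_1}\le\cc$ outright. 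The obstruction for the weaker axiom $\BA_{\rm Lip}(\cantor)$ is exactly that a Lipschitz injection $g$ with $g``B\subseteq A_0$ does \emph{not} determine $B$: writing $C=g``B\in[A_0]^{\aleph_1}$, the set $B$ is merely a transversal selecting one point out of each fibre $g^{-1}(c)$, $c\in C$, and a single $g$ can admit $2^{\aleph_1}$ such transversals when its fibres are large (e.g. the contraction collapsing the two children of the root). I would also stress that a naive pigeonhole on the $\cc$-many maps is a dead end here: a single isometry of $\cantor$ can simultaneously inject every member of a $2^{\aleph_1}$-sized family into $A_0$, so no contradiction arises from reusing one map, and the argument must instead extract from each injection a datum that separates the $B$'s.

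My plan to force such a datum is to use the axiom in \emph{both} directions together with the hypothesis $\cc<\aleph_{\omega_1}$, i.e. $\cc=\aleph_\gamma$ for some countable $\gamma$. For a fixed $A_0$ and arbitrary $\aleph_1$-dense $B$, apply $\BA_{\rm Lip}(\cantor)$ twice to obtain Lipschitz injections $f:A_0\to B$ and $g:B\to A_0$ with $f``A_0\subseteq B$, $g``B\subseteq A_0$. I would then run a Cantor--Bernstein style analysis of the pair $(f,g)$ inside a continuous increasing chain $(M_\xi)_{\xi<\omega_1}$ of countable elementary submodels of some $H(\theta)$ containing $A_0,f,g$ and the enumerations, with $\delta_\xi=M_\xi\cap\omega_1$, the goal being to reconstruct $B$ stage-by-stage from $(f,g)$ and local transversal choices, each local choice ranging over a set of size strictly below $\cc$. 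The point of the two-sided injections is that on the "backward'' orbits the transversal is pinned down by $f$, so that only the genuinely free choices remain, and the point of $\cc=\aleph_\gamma$ with $\gamma<\omega_1$ is to organise these choices so that the resulting code lands in a set of size $\cc$ rather than $2^{\aleph_1}$.

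The main obstacle is precisely this last compression. Because the natural bookkeeping is indexed by the $\omega_1$-chain, the most direct code for $B$ lives in an $\omega_1$-fold product of sub-continuum-sized choice sets, whose cardinality is $2^{\aleph_1}$, so the transversal multiplicity reappears at the level of the code and nothing has been gained. The real work -- and the only place $\cc<\aleph_{\omega_1}$ is truly used -- is to reorganise the fibre/transversal data so that only $\cc$-much of it is needed to recover $B$ (for instance by showing that the relevant injections and their nontrivial fibres can be drawn from a single family of size $\cc$ that is $\sigma$-generated below $\aleph_{\omega_1}$, collapsing the $\omega_1$-indexed sum back to $\cc$). I expect that this compression step is exactly where the unrestricted conjecture is lost, which is consistent with the fact that the clean count goes through for $\overline{\BA}_{\rm Lip}(\cantor)$ but only this conditional version is available for $\BA_{\rm Lip}(\cantor)$.
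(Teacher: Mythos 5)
Your proposal is not a proof: after setting up the counting framework and correctly observing that the strong axiom $\overline{\BA}_{\rm Lip}(\cantor)$ bounds the number of $\aleph_1$-dense sets by $\cc$, you propose a Cantor--Bernstein/elementary-submodel scheme whose decisive ``compression'' step you explicitly leave open, and you even predict it may fail. Moreover the hypothesis $\cc<\aleph_{\omega_1}$ is never used in any concrete way --- ``$\cc=\aleph_\gamma$ with $\gamma$ countable organises the choices'' is a hope, not an argument --- so the proposal contains no mechanism that actually converts the axiom into the inequality $2^{\aleph_1}\le\cc$.

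The concrete idea you are missing is that $\cc<\aleph_{\omega_1}$ enters for exactly one purpose: by a theorem of Baumgartner, it implies there is an almost disjoint family $\mathcal X$ on $\omega_1$ of size $2^{\aleph_1}$. Given this, fix a single $\aleph_1$-dense $A\subseteq\cantor$, partition it into countable dense pieces $\{A_\alpha\}_{\alpha<\omega_1}$, and for $X\in\mathcal X$ set $A_X=\bigcup_{\alpha\in X}A_\alpha$: these are $2^{\aleph_1}$ many $\aleph_1$-dense sets with pairwise \emph{countable} intersections. Now apply $\BA_{\rm Lip}(\cantor)$ in the direction opposite to the one you considered: the \emph{fixed} source $A$ is Lipschitz-injected into each \emph{varying} target $A_X$ by some $f_X$. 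Assuming toward a contradiction that $\cc<2^{\aleph_1}$, pigeonhole over the $\cc$-many continuous self-maps of $\cantor$ yields distinct $X,Y\in\mathcal X$ with $f_X=f_Y$; then the uncountable image of $A$ under this single map lies inside $A_X\cap A_Y$, which is countable --- a contradiction. Note that your dismissal of pigeonhole (``a single isometry can inject a $2^{\aleph_1}$-sized family into $A_0$, so reusing one map gives nothing'') is correct only for your orientation, varying $B$ into a fixed $A_0$; with a fixed source and almost disjoint targets, a reused map is immediately fatal, since an injection of $A$ has uncountable range. Consequently the transversal/non-determination problem that drives your entire plan is simply sidestepped by choosing the right direction, and no Cantor--Bernstein analysis or coding of $B$ is needed at all.
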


We need the following fact. Recall that for a cardinal $\kappa$ two subsets $A, B \subseteq \kappa$ of size $\kappa$ are {\em almost disjoint} if $|A \cap B| < \kappa$. A family $\mathcal A \subseteq P(\kappa)$ is {\rm almost disjoint} if its members are pairwise so and they all have size $\kappa$. We call such a family {\em almost disjoint on} $\kappa$ when we want to specify the cardinal.

\begin{fact}[Baumgartner, see page 414 of \cite{ADBaum}]
    If $\cc < \aleph_{\omega_1}$ then there is an almost disjoint family on $\omega_1$ of size $2^{\aleph_1}$. 
\end{fact}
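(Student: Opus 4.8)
The plan is to prove the statement in the equivalent language of \emph{almost disjoint functions}. Fixing a bijection $\omega_1 \cong \omega_1 \times \omega_1$, a family of $2^{\aleph_1}$ functions $g : \omega_1 \to \omega_1$ that pairwise agree on only countably many coordinates yields, by passing to graphs $\{(\alpha, g(\alpha)) : \alpha < \omega_1\}$, an almost disjoint family on $\omega_1$ of the same size: each graph has size $\aleph_1$, and the graphs of $g \neq h$ meet exactly in $\{(\alpha, g(\alpha)) : g(\alpha) = h(\alpha)\}$, which is countable precisely when $g$ and $h$ agree on a countable set. So it suffices to produce $2^{\aleph_1}$ functions in $\omega_1^{\omega_1}$ that pairwise agree countably. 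Throughout I would use the identity $2^{\aleph_1} = \cc^{\aleph_1}$, which holds since $\cc^{\aleph_1} = (2^{\aleph_0})^{\aleph_1} = 2^{\aleph_1}$.

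First I would isolate the case $\CH$, which already exhibits the mechanism. Under $\CH$ the tree $2^{<\omega_1}$ has cardinality $\sum_{\alpha < \omega_1} 2^{|\alpha|} = \aleph_1 \cdot \cc = \aleph_1$, while it carries $2^{\aleph_1}$ cofinal branches. Any two distinct branches $x, y \in 2^{\omega_1}$ first split at some countable level $\gamma$, so the chains $\{x \restriction \alpha : \alpha < \omega_1\}$ and $\{y \restriction \alpha : \alpha < \omega_1\}$ agree only on the countable set $\{x \restriction \alpha : \alpha \le \gamma\}$. Hence the branch-chains form an almost disjoint family of size $2^{\aleph_1}$ on the underlying set $2^{<\omega_1}$, which has size $\aleph_1$; transporting along a bijection with $\omega_1$ finishes this case. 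The entire difficulty is therefore to cut the size of the underlying tree (equivalently, the amount of branching data one must remember) from $\cc$ down to $\aleph_1$ when $\CH$ fails.

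For the general case I would write $\cc = \aleph_\beta$ with $\beta < \omega_1$, which is exactly the content of the hypothesis $\cc < \aleph_{\omega_1}$. The point of $\beta$ being \emph{countable} is that the cardinals $\langle \aleph_\gamma : \gamma \le \beta \rangle$ lying below $\cc$ form a sequence of countable length, so a branch through $2^{<\omega_1}$ can be approximated by a recursion of length $\omega_1$ in which, at each limit stage, only countably much of this hierarchy is ever active. Concretely, I would build the required functions by transfinite recursion, at each countable level committing to a value in $\omega_1$ that records just enough of the branch to separate it from all but countably many competitors, using the filtration of $\cc$ by the $\aleph_\gamma$ ($\gamma < \beta$) together with fixed cofinal families of countable subsets of each $\aleph_\gamma$ to keep the bookkeeping confined to a set of size $\aleph_1$.

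The main obstacle is precisely this general step: ensuring that after compressing the branching data into $\omega_1$-valued functions the resulting family is still \emph{pairwise} almost disjoint and still has full size $2^{\aleph_1}$, rather than saturating at some smaller maximal almost disjoint family. This is where $\cc < \aleph_{\omega_1}$ is indispensable — the analogous conclusion is known to fail once $\cc \ge \aleph_{\omega_1}$ — so any correct argument must use the countability of $\beta$ in an essential way at the limit stages, and verifying that the compression preserves countable agreement is the delicate point I would expect to occupy the bulk of the proof.
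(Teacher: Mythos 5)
Your preliminary reductions are fine: passing to graphs of pairwise countably-agreeing functions $g:\omega_1\to\omega_1$ correctly recasts the problem, and the $\CH$ case via the tree $2^{<\omega_1}$ (size $\aleph_1$ under $\CH$, with $2^{\aleph_1}$ branch-chains meeting pairwise in a countable initial segment) is a complete argument. For calibration: the paper itself does not prove this statement at all --- it quotes it as a black box from Baumgartner --- so your write-up has to stand on its own, and it does not. Everything past the $\CH$ case is a statement of intent rather than a construction: ``committing to a value in $\omega_1$ that records just enough of the branch to separate it from all but countably many competitors'' and ``keep the bookkeeping confined to a set of size $\aleph_1$'' never define the functions, and you yourself concede that verifying the compressed family still has pairwise countable agreement ``is the delicate point I would expect to occupy the bulk of the proof.'' That delicate point \emph{is} the theorem; the graph reduction and the $\CH$ case are the routine part.

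To see that the missing step is a genuine obstacle and not bookkeeping, consider the natural implementation of your plan. Start from the $2^{\aleph_1}$ branch functions $f_x(\alpha) = $ a code for $x \hook \alpha$, taking values in $\cc = \aleph_\beta$, and try to push the range down one cardinal at a time from $\aleph_{\gamma+1}$ to $\aleph_\gamma$: each $f_x$ has range of size $\aleph_1$, hence bounded by some $\delta_x < \aleph_{\gamma+1}$, so compose with fixed injections $e_\delta : \delta \to \aleph_\gamma$. This destroys almost disjointness: for $\delta_x \neq \delta_y$, a collision $e_{\delta_x}(f_x(\alpha)) = e_{\delta_y}(f_y(\alpha))$ says exactly that $f_y(\alpha) = \bigl(e_{\delta_y}^{-1} \circ e_{\delta_x}\bigr)(f_x(\alpha))$, and nothing prevents uncountably many such $\alpha$ even when $f_x$ and $f_y$ never agree. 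Circumventing this requires using more than the bare almost disjointness of the branch family --- for instance its coherence (agreement at $\alpha$ forces agreement below $\alpha$) --- inside a genuine induction on $\beta < \omega_1$ whose limit stages exploit the countability of $\beta$; no such mechanism, nor any substitute, appears in your proposal. Separately, your aside that the conclusion ``is known to fail once $\cc \ge \aleph_{\omega_1}$'' overstates the situation: what is known (also due to Baumgartner) is that the failure is \emph{consistent}, not that it is provable.
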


Given this fact we can present the proof of Lemma \ref{cardinal arithmetic}.

\begin{proof}[Proof of Lemma \ref{cardinal arithmetic}]
    Fix an $\aleph_1$-dense $A \subseteq 2^\omega$ and fix an almost disjoint family on $\omega_1$ of size $2^{\aleph_1}$, say $\mathcal X$. The existence of such is the only application of the assumption $\cc < \aleph_{\omega_1}$. Suppose towards a contradiction that moreover $2^{\aleph_0} < 2^{\aleph_1}$. Partition $A$ into $\omega_1$ many countable dense subsets $\{A_\alpha\}_{\alpha \in \omega_1}$ i.e. for $\alpha \neq \beta$ then $A_\alpha \cap A_\beta = \emptyset$ and $A = \bigcup_{\alpha \in \omega_1} A_\alpha$ (the existence of such a partition is possible by $\aleph_1$-density).  For each $X \in \mathcal X$ let $A_X = \bigcup_{\alpha \in X} A_\alpha$. This is also $\aleph_1$-dense. Note that since $X, Y \in \mathcal X$ are almost disjoint we have that $A_X \cap A_Y$ have countable intersection as well.

    By $\BA_{\rm Lip}(2^\omega)$ each $X \in \mathcal X$ there is a Lipschitz function $f_X:2^\omega \to 2^\omega$ so that $f_X \hook A$ injects into $A_X$. Since there are only continuum many continuous functions on $2^\omega$ there are distinct $X, Y \in \mathcal X$ for which $f_X = f_Y$. But this is a contradiction since the image of $A$ under $f_X = f_Y$ must have size $\aleph_1$ but gets mapped to two sets whose intersection is countable. 
\end{proof}


Next we show that $\BA_{\rm Lip}(2^\omega)$ has the same effect on the bounding number as $\BA$. 
\begin{lemma}
    $\BA_{\rm Lip}(2^\omega)$ implies $\mfb > \aleph_1$. \label{btheorem}
\end{lemma}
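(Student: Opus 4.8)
The plan is to argue by contradiction: assume $\BA_{\rm Lip}(2^\omega)$ holds while $\mfb = \aleph_1$, and manufacture two $\aleph_1$-dense sets $A, B \subseteq 2^\omega$ between which there is no Lipschitz injection, contradicting the axiom. Since $\mfb = \aleph_1$, I may fix a $\leq^*$-increasing, unbounded sequence $\langle f_\alpha : \alpha < \omega_1\rangle$ of strictly increasing functions in $\baire$ (such sequences exist of length $\mfb$). The only feature of this sequence I will really exploit is the following cofinality observation: for any single $g \in \baire$ the set $\{\alpha : f_\alpha \leq^* g\}$ is downward closed by $\leq^*$-increasingness, and is a \emph{proper} initial segment of $\omega_1$ since $g$ cannot bound the whole family; hence it is countable.

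The structural engine is the dichotomy recorded in Lemma \ref{nowhereisometriesareneversurjective} and the remark following it. Given a Lipschitz injection $h : A \to B$, I first extend it via Lemma \ref{lifting lipschitz} to a Lipschitz self-map of $2^\omega$ (legitimate since an $\aleph_1$-dense set is dense). Then either $h$ is an isometry on some basic clopen $[s]$, or $h$ is nowhere an isometry and, by the remark after Lemma \ref{nowhereisometriesareneversurjective}, its image $C_h := h``2^\omega$ is closed nowhere dense. I will design $A$ and $B$ so that each alternative is impossible. To defeat the isometric alternative I impose on $A$ and $B$ the parity structure of Proposition \ref{counterexample}: inside every basic clopen the points of $A$ first differ only at even coordinates while those of $B$ first differ only at odd coordinates. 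By (the proof of) Proposition \ref{counterexample} no isometry carries an uncountable subset of such an ``$A$-type'' set into such a ``$B$-type'' set; since $A \cap [s]$ is uncountable, an isometry on a clopen is ruled out outright, independently of $\mfb$.

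To defeat the nowhere-isometric alternative I build $B$ from the unbounded family. The aim is to attach to each $b \in B$ one of the functions $f_\alpha$ (with countable multiplicity, one representative in each clopen, keeping $B$ both $\aleph_1$-dense and odd-parity structured) and to read off from any nowhere-isometry Lipschitz $h$ a single invariant $g_h \in \baire$ measuring the ``branching rate'' of its image tree $\tilde h``2^{<\omega}$, in such a way that membership $b \in C_h$ forces $f_\alpha \leq^* g_h$ for the function $f_\alpha$ attached to $b$. Granting this, $C_h$ can only contain those $b$ whose attached function is $\leq^*$-dominated by $g_h$; by the cofinality observation these involve only countably many values of $\alpha$, hence countably many points, so $B \cap C_h$ is countable. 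But $h``A \subseteq B \cap C_h$ has size $\aleph_1$ because $h$ is injective on the $\aleph_1$-dense set $A$ --- a contradiction. Combining the two cases, no Lipschitz injection $A \to B$ exists.

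The hard part, and the step that genuinely uses the metric structure of $2^\omega$ rather than mere cardinality, is the construction matching the ``escape rate'' encoded by $f_\alpha$ in the point $b$ against the sparsity of a nowhere-isometry image. Concretely I must (i) pin down, via the homomorphism representation of Lemma \ref{homomorphism representation}, exactly how thin the image tree of a nowhere-isometry Lipschitz map is along each branch --- the non-branching (merging) levels guaranteed by ``nowhere an isometry'' must recur densely enough to be captured by one function $g_h$; and (ii) encode $f_\alpha$ into $b$, within the odd-parity regime, so that $f_\alpha(n) > g_h(n)$ for infinitely many $n$ forces the path of $b$ to leave the image tree, i.e. $b \notin C_h$. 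Carrying out (i) and (ii) simultaneously with the parity constraint, and verifying that the resulting $A$ and $B$ are genuinely $\aleph_1$-dense, is where the main work lies; everything else is bookkeeping around the cofinality trick above.
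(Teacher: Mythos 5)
There is a genuine gap, and it sits exactly where you defer the work: the invariant $g_h$ promised in steps (i)--(ii) cannot exist in the form your argument needs. The parity constraint you impose on $B$ forces each $B_s$ (points of $[s]$ whose coordinates $\geq |s|$ are nonzero only at odd places) to lie inside the closed set $\{y \in 2^\omega : y(2k)=0 \text{ for all } 2k \geq |s|\}$, and this set is itself the image of a nowhere-isometric Lipschitz map. Indeed, take $s = \emptyset$ and define $h:2^\omega \to 2^\omega$ by $h(x)(2k)=0$ and $h(x)(2k+1)=x(2k+1)$. This $h$ is Lipschitz; it is nowhere an isometry (two points of any clopen differing only at an even coordinate have the same image, so distances strictly shrink); and its image is exactly $\{y : y(2k)=0 \text{ for all } k\}$, which contains \emph{every} odd-parity point, i.e.\ all of $B_\emptyset$, no matter what is written on the odd coordinates. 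Since the parity requirement forces your coding of the functions $f_\alpha$ to live entirely on the odd coordinates, membership in such an image $C_h$ imposes no growth restriction whatsoever on the coded function: the implication ``$b \in C_h \Rightarrow f_\alpha \leq^* g_h$'' fails for every candidate $g_h$. More generally, being closed and nowhere dense gives density of ``escape nodes'' in the image tree but no uniform rate, so the second horn of your dichotomy collapses: nowhere-isometric Lipschitz maps can have images meeting $B$ in an uncountable (even co-countable) set. The two demands you place on $B$ --- parity structure to kill isometries and coding to kill nowhere-isometries --- pull in opposite directions, and to repair this you would have to exploit how $h$ acts injectively on $A$ rather than just constrain the image $h''2^\omega$, which is a different argument.

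For contrast, the paper's proof avoids analyzing images of the whole space altogether and instead puts the leverage on the \emph{domain} side: it cites the Bartoszy\'nski--Shelah theorem that there is a set $X \subseteq 2^\omega$ of size $\mfb$ such that no continuous map of $X$ into $\baire$ has unbounded range. Assuming $\mfb = \aleph_1$, one takes $A \supseteq X$ to be $\aleph_1$-dense and $B$ an $\aleph_1$-dense set containing a $\leq^*$-increasing unbounded scale; any continuous injection $f:A \to B$ (Lipschitzness is never used) maps $X$ onto an uncountable, hence unbounded, subset of the scale, contradicting the defining property of $X$. Your cofinality observation about scales is correct and is the same one the paper uses implicitly, but the substitute you propose for the Bartoszy\'nski--Shelah input is the part that breaks.
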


This is a relatively simple consequence of a result of Bartoszy\'nski-Shelah proved in \cite{continuousimagesofsetsofreals}. We recall the relevant fact.

\begin{fact}[Theorem 1 of \cite{continuousimagesofsetsofreals}]
    There is a set $X \subseteq 2^\omega$ of size $\mfb$ so that no continuous $f:X \to \baire$ can map $X$ onto an unbounded set. \label{BSfact}
\end{fact}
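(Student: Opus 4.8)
The plan is to reconstruct an explicit witness, following the strategy behind this fact: one codes an \emph{unbounded} family of functions into $\cantor$ so slowly that the coding becomes ``lossy'' for continuous maps, in the sense that any single continuous $f\colon X\to\baire$ can recover only a bounded amount of the coded growth. The point to keep in mind throughout is that a continuous $f$ defined on all of $\cantor$ automatically has bounded (indeed compact) image; the entire difficulty is that $X$ is not closed, so a continuous $f$ on $X$ need not extend, and its modulus of continuity (how deep into $x$ one must read to determine $f(x)\hook n$) is \emph{not} uniform over $X$. Controlling this non-uniform reading depth is exactly the main obstacle, and the construction of $X$ is engineered to defeat it.

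First I would set up the tools. Since $\mfb$ is regular, fix a $\leq^*$-increasing, $\leq^*$-unbounded sequence $\langle g_\alpha : \alpha<\mfb\rangle$ of strictly increasing elements of $\baire$. I would then pass to the standard \emph{tree representation} of continuous functions: a continuous $f\colon X\to\baire$ is coded by a countable monotone, level-nondecreasing map $\phi$ defined on a front of $2^{<\omega}$, with $f(x)=\bigcup\{\phi(x\hook k): x\hook k\in\operatorname{dom}\phi\}$ for $x\in X$; thus $f$ maps $X$ onto an unbounded set if and only if no single $g\in\baire$ dominates every $f(x_\alpha)$. Finally I would translate growth into \emph{interval partitions} (sequences of consecutive finite intervals covering $\omega$), ordered by $\bar I\sqsubseteq\bar J$ iff all but finitely many intervals of $\bar J$ contain an interval of $\bar I$, and invoke Blass's characterisation that $\mfb$ is the least size of a $\sqsubseteq$-unbounded family of interval partitions (with $\mfd$ the least dominating one).

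Next I would build $X$. Attach to each $g_\alpha$ an interval partition $\bar I^\alpha$ (say via the endpoints $g_\alpha^{(n)}(0)$ of iterates) and code $\bar I^\alpha$ as the point $x_\alpha\in\cantor$ which is the characteristic function of the set of left endpoints, choosing things so that $\alpha\mapsto x_\alpha$ is injective and $\{\bar I^\alpha\}$ is $\sqsubseteq$-unbounded of size $\mfb$; set $X=\{x_\alpha:\alpha<\mfb\}$. The design requirement is that reading $x_\alpha$ up to a finite coordinate $k$ reveals only the partition points below $k$, i.e.\ $g_\alpha$ on a short initial segment, so the code's information rate is slow, while along the $\leq^*$-increasing order distinct $x_\alpha,x_\beta$ share long initial segments whenever $g_\alpha,g_\beta$ agree initially. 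Note there is no logical inversion here: $\{\bar I^\alpha\}$ is unbounded yet $\{f(x_\alpha)\}$ will be bounded, precisely because the coding is lossy for continuous maps.

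The heart of the argument is the key lemma: given a continuous $f$ with fixed countable code $\phi$, read off from $\phi$ a single interval partition $\bar J_\phi$ recording how deep $\phi$ must look to output coordinate $n$, and show that whenever $\bar I^\alpha$ is $\sqsubseteq$-dominated by $\bar J_\phi$ the value $f(x_\alpha)$ is bounded by one fixed $g^\phi\in\baire$ (the slow coding forces any large output to depend only on the coarse, already-revealed partition data). One then uniformizes: by regularity of $\mfb$ and the $\leq^*$-increasing structure, the set of indices where $\bar I^\alpha$ escapes $\bar J_\phi$ is handled on an initial/bounded portion, and the remaining values are dominated by a single $g^*$, so $f''X$ is bounded. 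I expect the genuinely hard step to be this lemma, namely proving that the fixed code $\phi$ cannot ``outpace'' the slow coding on more than a $\sqsubseteq$-dominated (hence boundedly behaved) set of indices; this is where the non-extendability of $f$ is met head-on, and where the precise calibration of the interval-partition coding against the tree representation must be carried out. As a cross-check I would verify the clean special cases (when $f$ does extend to $\cantor$, boundedness is immediate by compactness) and confirm that the construction yields $|X|=\mfb$ rather than collapsing to a smaller set.
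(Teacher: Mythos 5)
The paper does not prove this Fact at all --- it quotes it verbatim as Theorem 1 of Bartoszy\'nski--Shelah \cite{continuousimagesofsetsofreals} --- so your proposal has to be measured against that source. You have correctly identified the ingredients of that argument (a $\leq^*$-increasing unbounded scale of length $\mfb$, a sparse coding of growth data as characteristic functions in $\cantor$, the monotone-map representation $\phi$ of a continuous function, and the observation that a special construction is needed since an arbitrary set of size $\mfb$ fails), but the proposal is a plan rather than a proof, and its two load-bearing steps are, as stated, broken. First, the ``key lemma'' is not well defined: for a monotone code $\phi$ of a continuous $f$ on the non-closed set $X$, the reading depth $d_\phi(x,n)=\min\{k : |\phi(x\hook k)|>n\}$ depends on the branch $x$, and $\sup_{x\in X} d_\phi(x,n)$ is in general infinite for each $n$ --- this is exactly the failure of uniform continuity that you flag as ``the main obstacle.'' Hence there is no single interval partition $\bar J_\phi$ ``recording how deep $\phi$ must look to output coordinate $n$''; your phrasing of the representation via a ``front'' of $2^{<\omega}$ quietly presupposes a uniform reading depth, i.e.\ that $f$ extends continuously to $\overline{X}$, which is precisely the trivial case you set aside. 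So the lemma you defer is not a reduction of the theorem; it \emph{is} the theorem, and the calibration of the coding against $\phi$ that constitutes the actual Bartoszy\'nski--Shelah argument is nowhere carried out.

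Second, the uniformization step has the direction of domination inverted. Since $\langle g_\alpha : \alpha<\mfb\rangle$ is $\leq^*$-increasing, the derived partitions $\bar I^\alpha$ are $\sqsubseteq$-increasing, and $\sqsubseteq$ is transitive modulo finitely many intervals (each ``bad'' interval of the middle partition ruins at most one interval of the outer one). Consequently, if some $\bar I^{\alpha_0}$ escapes a fixed $\bar J_\phi$, then so does $\bar I^\alpha$ for every $\alpha\geq\alpha_0$; and by $\sqsubseteq$-unboundedness of the family such an $\alpha_0$ exists. So the escaping indices form a \emph{terminal} segment of size $\mfb$, not ``an initial/bounded portion'' as you claim. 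Your lemma bounds $f(x_\alpha)$ only for the $\sqsubseteq$-dominated indices, which form a proper initial segment, and therefore says nothing about the tail --- exactly the part of $f``X$ whose boundedness must be established. What the argument would actually need is the reverse implication (for $\bar I^\alpha$ escaping, or better dominating, $\bar J_\phi$, conclude $f(x_\alpha)\leq^* g^\phi$, the heuristic being that a fast $g_\alpha$ makes the coding too sparse for $\phi$ to extract large values); but escaping ($\not\sqsubseteq$) is far weaker than dominating, so even this corrected statement requires the genuine combinatorial core of \cite{continuousimagesofsetsofreals}, which the proposal does not supply. The peripheral checks (compactness when $f$ extends to all of $\cantor$, injectivity of $\alpha\mapsto x_\alpha$ giving $|X|=\mfb$) are fine but do not touch the gap.
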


\begin{proof}[Proof of Lemma \ref{btheorem}]
  Assume $\BA_{\rm Lip}(2^\omega)$ and, towards a contradiction assume $\mfb = \aleph_1$. Let $B = \{b_\alpha \; | \; \alpha \in \omega_1\}$ be a $\mfb$-scale i.e. a set which is unbounded and so that if $\alpha < \beta$ then $b_\alpha \leq^* b_\beta$. The assumption on $\mfb$ guarantees the existence of such a set. Since $2^\omega$ can be written as $\baire \cup C$ with $C$ a (disjoint) countable dense set, we can treat $B \subseteq 2^\omega$ and in fact assume $B$ is $\aleph_1$-dense. Let $X \subseteq 2^\omega$ be a set of size $\aleph_1$ as in Fact \ref{BSfact} and let $A \supseteq X$ be $\aleph_1$-dense\footnote{In fact an inspection of the proof of \cite[Theorem 1]{continuousimagesofsetsofreals} shows that $X$ itself can be assumed to be $\aleph_1$-dense but we do not need this fact.}. Let $f:X \to B$ be a continuous, injective function. Of course our assumption of $\BA_{\rm Lip}(2^\omega)$ gives that $f$ is in fact Lipschitz but this is not even needed. We now immediately have a contradiction however to the defining property of $X$ since any $\aleph_1$-sized subset of $B$, in particular the image of $X$ under $f$, will be again unbounded. 
\end{proof}

\begin{remark}
    The above clearly has nothing to do with Lipschitz functions. Indeed it is enough to know that for every $A, B \subseteq 2^\omega$ of size $\aleph_1$ there is a continuous mapping from $A$ to $B$ with uncountable range. 
\end{remark}

\section{Consistency}
We now present the proof of the consistency of all of the Baumgartner Lipschitz axioms. A standard bookkeeping argument shows it is enough to prove the following lemma. 

\begin{lemma}
    Assume $\CH$. Let $A, B \subseteq \baire$ be $\aleph_1$-dense. There is a ccc partial order $\P_{A, B}$ forcing the existence of a Lipschitz bijection $\dot{f}:A \to B$. \label{consistencylemma}
\end{lemma}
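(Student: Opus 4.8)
The plan is to run a Baumgartner-style back-and-forth generically. I would take the conditions of $\P_{A,B}$ to be finite partial functions $p$ with $\mathrm{dom}(p) \subseteq A$ and $\mathrm{ran}(p) \subseteq B$ that are injective and \emph{forward Lipschitz}: for all $a, a' \in \mathrm{dom}(p)$ and all $k < \omega$, if $a \hook k = a' \hook k$ then $p(a) \hook k = p(a') \hook k$. By Lemma \ref{homomorphism representation} such a $p$ is the same data as a finite partial homomorphism of $\omega^{<\omega}$, and I would order conditions so that a stronger condition is only required to agree with a weaker one on the finitely many coordinates the latter has already fixed (i.e.\ on the associated finite homomorphism); committing only this finite-depth information, rather than whole branches $p(a)$, is what I expect to make the order ccc, since below a condition one remains free to re-route images. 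Granting ccc and the density facts below, the generic filter produces a forward Lipschitz injection $\dot f : A \to B$ which is onto, and this is the desired bijection (it is automatically Lipschitz, and by Lemma \ref{lifting lipschitz} it extends to all of $\baire$).

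The density families needed are the ``forth'' family (for each $a^* \in A$, the conditions with $a^* \in \mathrm{dom}(p)$) and the ``back'' family (for each $b^* \in B$, the conditions with $b^* \in \mathrm{ran}(p)$), together with a separation family ensuring injectivity. For the forth step, given $p$ and $a^* \notin \mathrm{dom}(p)$, let $k_0$ be the largest level at which $a^*$ agrees with some element of $\mathrm{dom}(p)$; forward Lipschitzness forces the first $k_0$ coordinates of the new image, and then any $b^* \in B$ in that cylinder works, of which there are $\aleph_1$ many by $\aleph_1$-density, so I can also avoid $\mathrm{ran}(p)$ and keep injectivity. The back step is even easier in $\baire$: given $p$ and $b^* \notin \mathrm{ran}(p)$, I choose $a^* \in A$ whose first coordinate differs from that of every element of $\mathrm{dom}(p)$, which exists since $\omega$ is infinite and $A$ is $\aleph_1$-dense; then $a^*$ agrees with no element of $\mathrm{dom}(p)$ beyond level $0$, so adjoining $\langle a^*, b^*\rangle$ trivially preserves forward Lipschitzness. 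This use of the infinite branching of $\baire$ is exactly the flexibility that the isometry version in Lemma \ref{CDHisom} lacked.

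The main obstacle is ccc. Given $\aleph_1$ conditions I would first thin, by a $\Delta$-system argument together with pigeonhole on the finite combinatorial data (the sizes, the internal agreement levels within $\mathrm{dom}(p_\xi)$ and within $\mathrm{ran}(p_\xi)$, and the values of all points up to a common level $M$ bounding these internal agreements), to an uncountable subfamily on which all this data is constant; under $\CH$ there are only $\aleph_1$ conditions, which streamlines the counting. On such a uniform subfamily the cross-agreements between non-corresponding points are already pinned down by the fixed internal pattern, so forward Lipschitzness for those pairs comes for free; the whole difficulty concentrates on the ``diagonal'' pairs $a^\xi_i$ versus $a^\eta_i$, whose images $b^\xi_i, b^\eta_i$ share the common prefix of length $M$. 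Here I would amalgamate $p_\xi$ and $p_\eta$ by extending these image branches so that they agree up to exactly the level to which the corresponding domain branches agree — this restores forward Lipschitzness, is possible precisely because only finite-depth homomorphism data has been committed and $B \cap [s]$ has size $\aleph_1$ for every $s$, and can simultaneously be arranged to keep the amalgamated map injective. Verifying that this re-routing can always be carried out for a suitable pair $\xi \neq \eta$, uniformly across all coordinates $i$, is the technical heart of the argument.
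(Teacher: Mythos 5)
There is a genuine gap, and it sits exactly where you located the ``technical heart'': the ccc claim. With the standard ordering (extension as partial functions, which is what you need for the generic to be a bijection onto $B$), the poset of \emph{all} finite partial injective Lipschitz maps from $A$ to $B$ is provably \emph{not} ccc for some $\aleph_1$-dense pairs, so no amount of $\Delta$-system thinning can save it. Concretely: let $B \subseteq \baire$ be any $\aleph_1$-dense set and let $A$ be any $\aleph_1$-dense set containing $\{\langle 0\rangle^\frown b \; | \; b \in B\}$ (this set is already $\aleph_1$-dense inside $[\langle 0\rangle]$, so it extends to an $\aleph_1$-dense subset of $\baire$). For $b \in B$ put $p_b = \{(\langle 0\rangle^\frown b,\, b)\}$. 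Each $p_b$ is a condition, but for $b \neq b'$, if $m$ is least with $b(m) \neq b'(m)$, then the domain points agree up to level $m+1$ while the images agree only up to level $m$; so $p_b \cup p_{b'}$ violates forward Lipschitzness and $p_b \perp p_{b'}$. This is an uncountable antichain, and it is exactly an instance of your ``diagonal pairs'' problem: once conditions commit to actual branches of $B$ as images, two conditions whose domain points agree deeper than their image points are irrevocably incompatible --- there is no ``re-routing,'' because extensions cannot modify values already assigned. Your proposed fix --- weakening the order so that a stronger condition need only respect finite-depth homomorphism data of a weaker one --- destroys the other half of the argument: a finite partial function from $A$ to $B$ does not determine any canonical finite depth, and if conditions only commit finite-depth data then no condition decides $\dot f(a) = b$ for a specific $b \in B$, so the generic map sends $A$ into $\baire$ (limits of re-routed images) rather than onto $B$. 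Any formulation of conditions faces this dilemma: full-branch promises reinstate the antichain above under any ordering, while finite-depth commitments lose the bijection onto $B$.

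This is why the paper's proof does something genuinely different, following Baumgartner's original argument for $\BA$: it keeps the standard poset $\P'$ of finite partial Lipschitz injections but, using $\CH$ in an essential structural way (not merely to count conditions), builds a ccc \emph{suborder} $\P \subseteq \P'$. One fixes a $\CH$-enumeration $\{d_\alpha\}_{\alpha<\omega_1}$ of countable subsets of $\P'$ together with their topological closures $c_\alpha$, partitions $B$ into countable dense sets $B_\alpha$, and then recursively constructs a partition $A = \bigcup_\alpha A_\alpha$ into countable dense sets, choosing each new point of $A_\beta$ so as to avoid countably many ``trap'' sets $X_{i,p,\beta,(s,t)}$ defined from the closures $c_\beta$. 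The poset $\P$ consists of those conditions respecting the partition ($p \hook A_\alpha$ maps into $B_\alpha$), and the ccc proof uses precisely the avoidance built into the choice of the $A_\beta$'s to rule out configurations like the one above. In short, your density (``forth''/``back'') arguments for the naive poset are fine, but the lemma's entire content is the construction of a subposet on which those bad pairs simply never appear as conditions; that construction is missing from your proposal.
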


The proof of this is very similar to the analgous result in \cite{Baum73}. We have in particular tried to follow the notation used there so as to help the reader familiar with that paper follow the proof. 
    \begin{proof}
    Assume $\CH$. Fix $A$ and $B$ which are $\aleph_1$-dense in $\baire$ and let $\P'$ be the set of finite partial Lipschitz injections $f:A \to B$. In other words $p \in \P$ is a finite partial injection from $A$ to $B$ so that for all $a, a' \in {\rm dom}(p)$ if $a \hook k = a' \hook k$ then $p(a) \hook k = p(a') \hook k$ for all $k < \omega$. We will find a ccc suborder of $\P'$. First, let $S$ be the set of all $x$ which are a finite set of pairs $(s_i, t_i) \in (\omega^{<\omega})^2$ so that every $(s_i, t_i) \in x$ is in $(\omega^k)^2$ for some $k < \omega$ and $x$ is {\em separating} i.e. if $(s, t), (u, v) \in x$ then $s$ and $u$ are incomparable and similarly for $t$ and $v$. Equivalently $[s] \cap [u] = \emptyset$ and the same for $v$ and $t$. Note that every $x \in S$ is essentially a basic open around some $p \in \P'$ (viewed as an element of $(\baire)^{|p|}$) which is refined enough that no two elements in the domain (respectively the range) are in the same ball. Let $\P'(x)$ be the set of all $p \in \P'$ so that $|p| = |x|$ and for each $a \in {\rm dom}(p)$ there is a unique $(s, t) \in x$ with $(x, p(x)) \in [s] \times [t]$. Of course there are many $x$ so that $p \in \P'(x)$ for each $p \in \P'$. 

    Using $\CH$, enumerate all countable subsets of $\P '$ which are contained in some $\P ' (x)$ as $\{d_\alpha \; | \; \alpha < \omega_1\}$. For each such, let $x_\alpha$ be such that $d_\alpha \subseteq \P'(x_\alpha)$. Let $c_\alpha$ be the set of all $p \in \P'$ so that for every $x \in S$ if $p \in \P'(x)$ then there is $q \in d_\alpha \cap \P'(x)$. Note that $c_\alpha$ is in essence the closure of $d_\alpha$ in $\P '$ in the sense that it is literally the topological closure of $d_\alpha$ in $(\baire)^{2|x|}$ intersected with the $|x|$-length conditions in $\P '$. 

    \begin{claim}
        Suppose $x \in S$ and $U \subseteq \P'(x)$. There is an $\alpha < \omega_1$ so that $d_\alpha \subseteq U \subseteq c_\alpha$. 
    \end{claim}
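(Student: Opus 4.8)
The plan is to recognize that the combinatorial description of $c_\alpha$ is nothing but the topological closure of $d_\alpha$ restricted to conditions of the correct size, so that the Claim amounts to the assertion that every subset $U$ of the separable metric space $\P'(x)$ contains a countable dense subset, which is then a suitable $d_\alpha$. To set this up, put $n = |x|$ and write $x = \{(s_1,t_1),\dots,(s_n,t_n)\}$. Every $p \in \P'(x)$ has $|p| = n$, and the separating property of $x$ assigns to $p$ a canonical labeling of its pairs: the $j$-th pair is the unique pair of $p$ lying in $[s_j]\times[t_j]$. This identifies $\P'(x)$ with a subset of the product space $(\baire\times\baire)^n$ and equips $U \subseteq \P'(x)$ with the subspace topology of a second countable metric space.

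First I would produce the candidate. Since $(\baire\times\baire)^n$ is second countable and metrizable it is hereditarily separable, so $U$ has a countable dense subset $D$. As $D \subseteq U \subseteq \P'(x)$ is countable and contained in $\P'(x)$, it occurs in the enumeration as $d_\alpha = D$ for some $\alpha < \omega_1$; moreover every element of $D$ has size $n$, so whatever $x_\alpha$ is chosen satisfies $|x_\alpha| = n$. The inclusion $d_\alpha = D \subseteq U$ is then immediate, and the degenerate cases $U = \emptyset$ and $x = \emptyset$ are dispatched trivially.

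The remaining inclusion $U \subseteq c_\alpha$ is where density is used. Fix $p \in U$ and an arbitrary $x' \in S$ with $p \in \P'(x')$; I must exhibit $q \in d_\alpha \cap \P'(x')$. The key point is that $\P'(x')$ is relatively open among the length-$n$ conditions: membership $q \in \P'(x')$ is decided by finitely many initial segments of the coordinates of $q$, namely those whose lengths occur in $x'$. Hence $\P'(x') \cap U$ is open in $U$ and contains $p$, so by density $D$ meets it, yielding $q \in D \cap \P'(x') = d_\alpha \cap \P'(x')$. Since $x'$ was arbitrary with $p \in \P'(x')$, this gives $p \in c_\alpha$, and therefore $U \subseteq c_\alpha$.

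The one genuinely delicate step, which I would treat most carefully, is the bookkeeping between the two labelings: a point $q$ close to $p$ in the $x$-labeling must be shown to fall into $\P'(x')$ with the same pattern of balls as $p$. This is handled by taking $q$ to agree with $p$ on initial segments longer than any length occurring in $x'$, which forces each pair of $q$ into the same ball of $x'$ as the corresponding pair of $p$; since permuting coordinates is a homeomorphism of the product, the openness used above is unaffected by the discrepancy in how $x$ and $x'$ index the pairs. With this in hand the Claim follows, and it also records the promised identification $c_\alpha = \overline{d_\alpha} \cap \{p \in \P' : |p| = n\}$.
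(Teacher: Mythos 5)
Your proof is correct and follows essentially the same route as the paper's: identify $U$ with a subspace of a finite power of Baire space via the labeling given by $x$, take a countable dense subset $D$ (hereditary separability), let $d_\alpha = D$, and use density to conclude $U \subseteq c_\alpha$. The only difference is that you spell out the step the paper leaves implicit, namely that $\P'(x') \cap U$ is relatively open (membership in $\P'(x')$ being determined by finitely many initial segments), which is exactly the justification needed for ``density ensures $U \subseteq c_\alpha$.''
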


\begin{proof}
    Since $U \subseteq (\baire)^{2|x|}$ it is a separable metric space hence it has a countable dense set $d = d_\alpha$ for some $\alpha$. Now, density ensures that $U \subseteq c_\alpha$ by the definition of $c_\alpha$.  
    \end{proof}
We note the interesting case of this is when $U$ is uncountable. 

    We can now construct the partial order $\P$. We will find partitions $A = \bigcup_{\alpha < \omega_1} A_\alpha$ and $B = \bigcup_{\alpha < \omega_1} B_\alpha$ so that $\P = \{p \in \P' \; | \; \forall \alpha \, p\hook A_\alpha:A_\alpha \to B_\alpha\}$ is ccc and each $A_\alpha$ and $B_\alpha$ are countable and dense. In fact, we don't need to assume anything about the $B$ partition\footnote{This asymmetry typifies the difference between the Lipschitz case and the linear order case. The inverse of a Lipschitz function need not be Lipschitz while the inverse of a linear order isomorphism is again a linear order isomorphism.} so let $B = \bigcup_{\alpha < \omega_1} B_\alpha$ be a partition of $B$ into countable dense sets and for each $\alpha < \omega_1$ enumerate $B_\alpha = \{b^\alpha_i\; | \; i < \omega\}$. Suppose that $\beta < \omega_1$ and for all $\alpha < \beta$ we have already defined $A_\alpha$. We work on defining $A_\beta$. 

We need to define countable, dense $\{a^\beta_n\; | \; n < \omega\} \subseteq A \setminus \bigcup_{\alpha < \beta} A_\alpha$. Let $a^\beta_0$ be the least element of $A$ not in any $A_\alpha$ so far (relative to some fixed enumeration in order type $\omega_1$). We do this simply to ensure the construction covers $A$. Let $\{s_n\; | \; n < \omega\}$ enumerate $\omega^{<\omega}$ so that $a^\beta_0 \in [s_0]$ and for every $i < \omega$ we have $b^\beta_i \in [s_i]$ (we can choose any enumeration we like so we can do this - at worst we can let $s_0 = [\emptyset]$). Suppose now that $\{a^\beta_i\; | \; i<n + 1\}$ have been defined and $a_i^\beta \in [s_i]$. We define $a^\beta_n$ as follows. For each $p$ in the already constructed part of $\P$, $i < n+1$, $\alpha < \beta$, $(s, t) \in x_\beta$, let $X_{i, p, \beta, (s, t)}$ be the following set $$\{a \in A \cap [s]\; | \; p \cup (a, b^\beta_i) \in c_\beta \, \land b^\beta_i \in [t]\}$$ if this set is countable and let it be the empty set otherwise. Now choose $a^\beta_n$ to be any element of $[s_n]$ in $A$ not already constructed nor in any of the $X_{i, p, \beta, (s, t)}$'s. Note that by $\aleph_1$-density this is possible.

    Let $\P$ be the set of $p \in \P'$ so that for all $\alpha < \omega_1$ we have $p\hook A_\alpha$ maps into $B_\alpha$. Now we need to check that the construction works - i.e. that $\P$ is ccc. First we make a slight technical reduction. Note that $\P$ has a dense subset consisting of elements $p$ so that if $\beta$ is greatest with $p \cap A_\beta \times B_\beta$ non-empty then there is a unique $a \in {\rm dom}(p) \cap A_\beta$ and if $a = a^\beta_k$ and $p(a) = b^\beta_l$ then $l < k$. We call this unique element of $A_\beta \times B_\beta$ the {\em last constructed element} of $p$. We work with this dense subset. Towards a contradiction assume that $U \subseteq \P$ is an uncountable antichain. By standard arguments we can assume that there is an $n < \omega$ so that $|p| = n$ for all $p \in U$, this $n$ is minimal for which there is an antichain like this. Also, for each $p \neq q \in U$ we can assume ${\rm dom}(p) \cap {\rm dom}(q) = {\rm range}(p) \cap {\rm range}(q) = \emptyset$. This is because we can first apply a $\Delta$-system argument to the domain of the conditions to ensure that the domains of an uncountable subset of $U$ form a $\Delta$-system and then, apply pigeon hole plus the fact that there are only countably many possible targets for any given $a \in A$ to thin out further and obtain an uncountable antichain where the elements pointwise agree on their (shared, unique) common domain. However now if two such conditions are not compatible then they remain so with this shared piece removed - which would contradict the minimality of $n$ were it not empty. Moreover, we can assume that there is an $\alpha < \omega_1$ so that $d_\alpha \subseteq U \subseteq \P'(x_\alpha)$ and there are $(s, t) \in x_\alpha$ so that every $p \in U$ has its last constructed pair in $[s]\times [t]$ (with respect to the construction given above). Moreover in that last constructed pair we know, because of the dense subset we restricted to, that the domain element is the last constructed one. Let $Q = \{p \setminus [s]\times [t] \; | \; p \in U\}$ and let $B' = B \setminus \bigcup_{p \in d_\alpha} {\rm range}(p)$. 

    \noindent \underline{Case 1:} $n = 1$. In this case we claim that for every $b \in B'$ the set $Z_b$ of all $a$ so that $\{(a, b)\} \in c_\alpha$ is a singleton. Indeed, suppose $a, a' \in Z_b$ and let $k$ be least with $a (k) \neq a'(k)$. Then there are $(a_0, b_0)$ and $(a_0', b_0')$ in $d_{\alpha}$ with $b_0 \hook k + 1 = b_0' \hook k+1 = b \hook k+1$ and $a_0 \hook k + 1 = a \hook k+1$ and $a_0'\hook k+1 = a' \hook k+1$. But then these two conditions are compatible contradicting the assumption on $U$. 

    \noindent \underline{Case 2:} $n > 1$. We need some notation - given a condition $p \in \P$ and a natural number $l < \omega$ let $p \hook l$ denote the set of $\{(a \hook l, p(a) \hook l)\; | \; a \in {\rm dom}(p)\}$. We claim that for all but countably many $q \in Q$ and every $b \in B'$ the set $Z_b$ of $a$ so that $q \cup (a, b) \in c_\alpha$ is countable. Suppose not and let $Q'$ be an uncountable subset of $Q$ with the property above. By the minimality of $n$ we have to be able to find compatible $q_0, q_1 \in Q'$. Let $l$ be large enough so that every element in the ranges of $q_0$ and $q_1$ (and hence also their domains by the Lipschitz property) are separated. Note that now any two $q', r'$ so that $q' \hook l = q_0 \hook l$ and same for $q_1$ and $r'$ will be compatible. Now let $a_0$ and $a_1$ be so that $q_0 \cup (a_0, b) \in c_\alpha$ and the same for $q_1 \cup (a_1, b)$ and without loss of generality we may assume that $a_0 \hook l  \neq a_1 \hook l$ and indeed $l$ separates all the points in $q_i \cup (a_i, b)$. Now let $r_0, r_1 \in d_\alpha$ so that $r_0 \hook l = q_0 \cup (a_0, b) \hook l$ and the same for $r_1$ with $q_1$ and $a_1$. Then again $r_1$ and $r_0$ must be compatible. 

Now it follows from the above two cases that either way we must find $p \in U$ so that the $(a, b) \in [s]\times[t] \cap p$ are in some $A_\beta \times B_\beta$ with $\beta$ above $\alpha$ and moreover for every $b \in B'$ the set $Z_b$ defined above with respect to $p \setminus [s] \times [t]$ is countable. But then $a$ could not be chosen as it was by the construction of the partition. This completes the proof of the ccc.

To finish the proof of Lemma \ref{consistencylemma} we need to check that $\P$ forces the existence of a Lipschitz bijection from $A$ to $B$. Let $G \subseteq \P$ be generic and work in $V[G]$. Clearly $g:= \bigcup G$ is a Lipschitz injection from some subset of $A$ into $B$. We need to show that in fact the domain is all of $A$ and the range is all of $B$. Thus it remains to show that for all $a \in A$ and $a \in B$ there is a dense set of conditions $q$ so that $a \in {\rm dom}(q)$ and $b \in {\rm range}(q)$. Let $a \in A$, $b \in B$ and $p \in \P$. Without loss assume that $a \notin {\rm dom} (p)$ and $b \notin {\rm range}(p)$. Let $\alpha, \beta < \omega_1$ so that $a \in A_\alpha$ and $b \in B_\beta$. By density plus the finiteness\footnote{This assumption separates the $\baire$ case of this forcing notion from the $2^\omega$ case.} of $p$ there are $k < \omega$ so that for no $a' \in {\rm dom}(p)$ do we have $a' (0) = k_0$ and there is a $c \in A_\beta$ so that $c(0) = k$. Set $p' = p \cup \{\langle c, b\rangle\}$. Note that this is still Lipschitz since $c$ differs from every element of the domain of $p$ on the first coordinate. Let $l < \omega$ now be maximal so that there is an $a' \in {\rm dom}(p ')$ with $a \hook l = a' \hook l$. Let $j < \omega$ so that no $b' \in {\rm range}(p')$ is such that $b' (l) = j$. Again by density there is a $d \in B_\alpha$ so that $d \hook l+1 = [p'(a') \hook l]^\frown j$. Let $q = p' \cup \{\langle a, d\rangle\}$. An easy verification shows that this is still Lipschitz and hence a condition. 
    \end{proof}

As a consequence we have the proof of Theorem \ref{consistencyintro} which we repeat below.
    \begin{theorem}\label{consistency}
   $\overline{\BA}_{\rm Lip} (\baire)$ can be forced by a finite support iteration of ccc forcing notions over a model of $\CH$.    
    \end{theorem}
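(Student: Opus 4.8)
The plan is to run a finite support iteration of length $\omega_2$ over a model of $\GCH$ (or at least of $\CH + 2^{\aleph_1} = \aleph_2$, which we may assume after a harmless preliminary forcing), using the posets supplied by Lemma \ref{consistencylemma} as iterands and a bookkeeping function to guarantee that every relevant pair of $\aleph_1$-dense sets is eventually handled. Concretely, I would fix a bookkeeping function $\pi:\omega_2 \to \omega_2 \times \omega_2$ that is surjective with each value attained cofinally, and define a finite support iteration $\langle \P_\alpha, \dot\Q_\alpha : \alpha < \omega_2\rangle$ by recursion. At stage $\alpha$, the value $\pi(\alpha)$ together with a fixed enumeration (in $V[G_\alpha]$) of the nice $\P_\alpha$-names for pairs of subsets of $\baire$ singles out a pair $(\dot A, \dot B)$; if this pair is forced to be $\aleph_1$-dense, let $\dot\Q_\alpha$ be a name for the poset $\P_{\dot A, \dot B}$ of Lemma \ref{consistencylemma}, and otherwise let $\dot\Q_\alpha$ be trivial.

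The first thing to verify is that the lemma is applicable at every stage, i.e. that $\CH$ holds in each intermediate model $V[G_\alpha]$ with $\alpha < \omega_2$. Each iterand $\dot\Q_\alpha$ is (a name for) a poset of finite partial Lipschitz injections, hence of size at most the continuum, which is $\aleph_1$ at stage $\alpha$. By the standard computation for finite support ccc iterations, an initial segment of length $< \omega_2$ with iterands of size $\le \aleph_1$ has size $\le \aleph_1$, so the number of nice $\P_\alpha$-names for reals is at most $\aleph_1^{\aleph_0} = \aleph_1$; hence $V[G_\alpha] \models \CH$ and Lemma \ref{consistencylemma} does deliver a ccc iterand. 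As finite support iterations of ccc forcings are ccc, the full iteration $\P_{\omega_2}$ is ccc and preserves all cardinals, and a count of nice names together with $2^{\aleph_1} = \aleph_2$ gives $\mfc = \aleph_2$ in $V[G_{\omega_2}]$.

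It remains to argue that $V[G_{\omega_2}] \models \overline{\BA}_{\rm Lip}(\baire)$. Let $A, B \subseteq \baire$ be $\aleph_1$-dense in the final model. The key point is a reflection observation: since $|A| = |B| = \aleph_1$ and ${\rm cf}(\omega_2) = \omega_2 > \omega_1$, every real of $A \cup B$ appears in a bounded initial segment of the iteration, so $A, B \in V[G_\alpha]$ for some $\alpha < \omega_2$; moreover $\aleph_1$-density is absolute between models computing $\aleph_1$ correctly, so $A, B$ are already $\aleph_1$-dense in $V[G_\alpha]$. By the choice of $\pi$ there is a stage $\beta \ge \alpha$ at which the pair $(A, B)$ is addressed, whence $\dot\Q_\beta = \P_{A, B}$ and $V[G_{\beta + 1}]$ contains a Lipschitz bijection $f: A \to B$. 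The set $f$ of pairs persists to $V[G_{\omega_2}]$, where $A$ remains dense and $f$ remains Lipschitz; applying Lemma \ref{lifting lipschitz} there lifts $f$ to a Lipschitz map $\baire \to \baire$ bijecting $A$ onto $B$, which witnesses $\overline{\BA}_{\rm Lip}(\baire)$.

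The main obstacle is the bookkeeping/reflection step: the enumeration must be arranged so that every pair of $\aleph_1$-dense sets occurring anywhere along the iteration, including at late stages, is captured at some strictly later stage. This is precisely where the cofinality of $\omega_2$ (to reflect sets of size $\aleph_1$ into a bounded stage) and the hypothesis $2^{\aleph_1} = \aleph_2$ (to have only $\aleph_2$ names to enumerate so the cofinal bookkeeping suffices) are used. Everything else is the routine apparatus of finite support ccc iteration already alluded to by the phrase \emph{standard bookkeeping argument} preceding Lemma \ref{consistencylemma}.
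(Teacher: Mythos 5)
Your proposal is correct and is essentially the paper's own proof: the paper proves Lemma \ref{consistencylemma} in detail and then dismisses Theorem \ref{consistency} with the phrase ``a standard bookkeeping argument,'' which is exactly the length-$\omega_2$ finite support iteration with bookkeeping, preservation of $\CH$ at intermediate stages, reflection of $\aleph_1$-dense pairs to a bounded stage, and the final lift via Lemma \ref{lifting lipschitz} that you spell out. The only compressed step is the inference from ``every real of $A \cup B$ appears at a bounded stage'' to ``$A, B \in V[G_\alpha]$'': strictly, one first gets $A \subseteq V[G_\beta]$ for some $\beta < \omega_2$, and then, since $A$ is a size-$\aleph_1$ subset of the reals of $V[G_\beta]$ and the tail forcing is ccc with finite supports, a nice name for $A$ over $V[G_\beta]$ involves only $\aleph_1$ many ordinals below $\omega_2$, so $A$ lies in some $V[G_\gamma]$ with $\gamma < \omega_2$ --- a routine refinement that uses exactly the ingredients (ccc, finite support, ${\rm cf}(\omega_2) > \omega_1$) you already name.
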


    Using the ``$\CH$ trick" from \cite{BaumPFA} we also have the following.

    \begin{corollary}
        $\PFA$ implies $\overline{\BA}_{\rm Lip} (\baire)$. 
    \end{corollary}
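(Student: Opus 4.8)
The plan is to follow Baumgartner's ``$\CH$ trick'' from \cite{BaumPFA} almost verbatim, using the poset of Lemma \ref{consistencylemma} as the building block, so that $\PFA$ supplies the generic-like filter that the ccc iteration of Theorem \ref{consistency} supplied in the consistency proof. Fix $\aleph_1$-dense $A, B \subseteq \baire$. The one obstruction to applying $\PFA$ directly is that $\P_{A,B}$ was defined under $\CH$: the recursion producing the partitions $A = \bigcup_{\alpha<\omega_1} A_\alpha$ and $B = \bigcup_{\alpha<\omega_1} B_\alpha$ enumerated \emph{all} countable subsets of $\P'$ lying in some $\P'(x)$ in order type $\omega_1$, which is only possible when $\mfc = \aleph_1$, whereas $\PFA$ gives $\mfc = \aleph_2$. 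First I would replace this enumeration by an internal one coming from an elementary chain. Fix a large regular $\theta$ and a continuous increasing chain $\langle N_\alpha : \alpha < \omega_1\rangle$ of countable elementary submodels of $H(\theta)$ with $A, B \in N_0$, and set $N = \bigcup_{\alpha<\omega_1} N_\alpha$. Then $N \cap \omega_1 = \omega_1$, so $\omega_1 \subseteq N$, and since $A, B$ have size $\aleph_1$ one checks in the usual way that $A, B \subseteq N$. I would then carry out the construction of Lemma \ref{consistencylemma} relative to $N$: let the stage-$\alpha$ datum $d_\alpha$ range over those countable subsets of $\P'$ contained in some $\P'(x)$ that are \emph{elements} of $N$ (each such set lies in some $N_{\alpha+1}$, and there are only $\aleph_1$ of them since $|N| = \aleph_1$), and define the partitions of $A$ and $B$ exactly as before but against this enumeration. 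This produces a genuine poset $\P_{A,B} \in V$ with no appeal to $\CH$.

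The main obstacle is to verify that this $\P_{A,B}$ is still ccc, hence proper, in $V$, even though the enumeration now sees only the countable subsets of $\P'$ that belong to $N$ rather than all of them. This is precisely the point addressed by Baumgartner's trick, and I expect it, rather than any new combinatorics, to be the only real content beyond Lemma \ref{consistencylemma}. Suppose toward a contradiction that $U \subseteq \P_{A,B}$ is an uncountable antichain. After the standard reductions ($\Delta$-system, pigeonhole to a common length $n$ and a common $x_\alpha \in S$, and fixing the last pair $(s,t) \in x_\alpha$), the argument of Lemma \ref{consistencylemma} yields a contradiction as soon as the relevant countable dense subset of the refined antichain is one of the $d_\alpha$. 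Here elementarity of the chain does the work: a putative uncountable antichain can be reflected so that the countable dense subset of it needed in the argument lies in $N$, hence occurs as some $d_\alpha$ and was handled at some stage. With that in hand, both cases $n = 1$ and $n > 1$ go through unchanged to show that the last-constructed domain element of some $p \in U$ could not have been chosen as it was, contradicting the construction of the partition. This is exactly the reflection that makes the length-$\omega_1$ $\CH$ recursion unnecessary.

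With ccc established, $\P_{A,B}$ is proper, and I would finish by applying $\PFA$. For each $a \in A$ set $D_a = \{p \in \P_{A,B} \;|\; a \in {\rm dom}(p)\}$ and for each $b \in B$ set $E_b = \{p \in \P_{A,B} \;|\; b \in {\rm range}(p)\}$; the density of all these sets is exactly the genericity computation carried out at the end of the proof of Lemma \ref{consistencylemma}. As $|A| = |B| = \aleph_1$, this is a family of $\aleph_1$ many dense sets, so $\PFA$ produces a filter $G$ meeting all of them, and $f := \bigcup G$ is then a Lipschitz function whose domain is all of $A$ and whose range is all of $B$, i.e.\ a Lipschitz bijection $A \to B$ witnessing $\overline{\BA}_{\rm Lip}(\baire)$. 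The companion axiom $\overline{\BA}_{\rm Lip}(\cantor)$, and with it the full strength of all four Lipschitz axioms under $\PFA$, then follows from Theorem \ref{implications}.
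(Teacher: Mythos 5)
Your final paragraph (extracting the Lipschitz bijection from a $\PFA$ filter via the dense sets $D_a$, $E_b$) is fine, but the core of your argument --- the claim that the poset built from an $N$-relative enumeration is still ccc --- has a genuine gap, and it is exactly the gap that the ``$\CH$ trick'' exists to circumvent. In the ccc proof of Lemma \ref{consistencylemma}, the contradiction requires that the reduced uncountable antichain $U$ contain a countable, topologically dense subset which \emph{occurs as some $d_\alpha$ of the enumeration}: the elements of $d_\alpha$ are used as members of $U$ (Case 1 produces two compatible elements of $d_\alpha \subseteq U$, contradicting that $U$ is an antichain), and the stage-$\beta$ choice of $A_\beta$ only avoided the countable sets $X_{i,p,\cdot,(s,t)}$ attached to closures $c_\alpha$ of sets that were actually enumerated. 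Under $\CH$ every relevant countable set is enumerated. In your version only the countable sets lying in $N$ are, and your justification is that ``a putative uncountable antichain can be reflected so that the countable dense subset of it needed in the argument lies in $N$.'' There is no such reflection: your poset $\P_{A,B}$ is built in $V$ by an $\omega_1$-recursion \emph{using $N$ as a parameter}, so $\P_{A,B}$ is not an element of $N$ and elementarity of the $N_\alpha$ in $H(\theta)$ says nothing about its antichains; moreover, under $\neg\CH$ (which $\PFA$ enforces) there are $2^{\aleph_0} > \aleph_1$ countable subsets of $\P'$, and an uncountable antichain need not have \emph{any} countable dense subset among the $\aleph_1$ many that belong to $N$. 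There is also a meta-obstruction: your construction and its ccc proof, if correct, would go through in $\ZFC$ alone, and the dense sets $D_a, E_b$ remain dense for the modified partition; hence $\MA_{\aleph_1}$ (applied to $\P_{A,B}$) would already imply $\overline{\BA}_{\rm Lip}(\baire)$, and so $\BA_{\rm Lip}(\cantor)$ by Theorem \ref{implications} --- but whether $\MA$ implies $\BA_{\rm Lip}(\cantor)$ is stated as an open question in this very paper. So the ccc verification cannot be ``the argument of Lemma \ref{consistencylemma} unchanged.''

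The trick the paper actually invokes keeps $\CH$ available rather than eliminating it: given $\aleph_1$-dense $A, B \subseteq \baire$, first force with the $\sigma$-closed collapse $\mathbb{Q} = {\rm Coll}(\omega_1, 2^{\aleph_0})$. This adds no reals and preserves $\omega_1$, so $A$ and $B$ remain $\aleph_1$-dense in $V^{\mathbb{Q}}$, where $\CH$ now holds; Lemma \ref{consistencylemma} then applies verbatim in $V^{\mathbb{Q}}$ to give a ccc poset $\dot{\P}_{A,B}$ adding a Lipschitz bijection. The two-step iteration $\mathbb{Q} \ast \dot{\P}_{A,B}$ is proper, being a composition of a $\sigma$-closed and a ccc forcing, and one applies $\PFA$ to it with the $\aleph_1$ many dense sets of conditions $(q, \dot{p})$ such that $q$ decides $\dot{p}$ to be a concrete finite function with $a$ in its domain (respectively, $b$ in its range); the filter then yields the Lipschitz bijection in $V$ exactly as in your last paragraph. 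If you wish to rescue the elementary-chain idea, the burden is to prove ccc of the $N$-relative poset outright, and that is not a cosmetic variant of Lemma \ref{consistencylemma} but (at least) a solution to the paper's open problem about $\MA$.
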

    
    \section{Additivity of the Null Ideal}
In this section we prove that $\overline{\BA}_{\rm Lip}(\cantor)$ and hence $\overline{\BA}_{\rm Lip} (\baire)$ implies that the additivity of the null ideal is larger than $\aleph_1$. Since $\add(\Null)$ is the smallest cardinal in the Cicho\'{n} diagram this proves Theorem \ref{cichonintro} from the introduction. 

\subsection{The Case of $\baire$}
The case of $\baire$ is actually much more straightforward so we present it first as a warmup even though it is a (seemingly) weaker result. 
\begin{lemma}
    $\overline{\BA}_{\rm Lip}(\baire)$ implies $\add(\Null) > \aleph_1$. \label{add}
\end{lemma}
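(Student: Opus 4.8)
The plan is to argue by contraposition through Bartoszy\'{n}ski's combinatorial characterization of $\add(\Null)$: recall (see \cite{BarJu95}) that $\add(\Null) > \aleph_1$ holds if and only if every family $\{g_\alpha \mid \alpha < \omega_1\} \subseteq \baire$ is \emph{localized} by a single slalom, i.e. there is $\phi:\omega \to [\omega]^{<\omega}$ with $|\phi(n)| \leq n$ such that for each $\alpha$ one has $g_\alpha(n) \in \phi(n)$ for all but finitely many $n$. So I would fix an arbitrary such family and use $\overline{\BA}_{\rm Lip}(\baire)$ to manufacture one localizing slalom; producing it for every $\aleph_1$-family is exactly the desired additivity bound.

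First I would turn the family into geometry. Partitioning $\omega$ into consecutive blocks $I_n$ whose lengths grow quickly, I would code the data $g_\alpha(n)$ into the coordinates lying in $I_n$, and then build $\aleph_1$-dense sets $A, B \subseteq \baire$ carrying this coding, filling the remaining coordinates freely to guarantee density. Since I want to exploit the \emph{forward} (contracting) direction of the Lipschitz condition, I would place the coding on the \emph{range} side $B$: for a representative $b \in B$ coding $g_\alpha$, the value $g_\alpha(n)$ is read off from $b \restriction m_n$ where $m_n = \max I_n + 1$. Applying $\overline{\BA}_{\rm Lip}(\baire)$ yields a Lipschitz bijection $f : A \to B$, which by Lemma \ref{homomorphism representation} and Lemma \ref{lifting lipschitz} is induced by a level- and order-preserving homomorphism $\tilde f$ of $\omega^{<\omega}$, so that $b \restriction m = \tilde f(a \restriction m)$ for $b = f(a)$. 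Thus $g_\alpha(n)$ is a fixed function of $a \restriction m_n$, where $a = f^{-1}(b)$ is the preimage.

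The slalom is then meant to be $\phi(n) := \{\tilde f(s)\text{-datum} \mid s \in \omega^{m_n} \text{ a relevant node of the domain tree}\}$, a finite set because $\tilde f$ is level-preserving and the count of relevant depth-$m_n$ nodes is finite; since $f$ is a bijection, every $g_\alpha$ is captured. The main obstacle — and the entire content of the construction — is \emph{width control}: $\aleph_1$-density forces $A$ and $B$ to be unbounded in every coordinate, so $\phi$ cannot be inherited from a ``bounded'' target and the count of relevant depth-$m_n$ preimage-prefixes is a priori infinite. The crux is therefore to build $A$ and $B$ \emph{simultaneously} so that the preimages of the coded points cluster into at most $n$ many level-$m_n$ classes, keeping $|\phi(n)| \leq n$ while both sets remain genuinely $\aleph_1$-dense; this is engineered by a careful interleaving reminiscent of the obstruction in Proposition \ref{counterexample}, and it is precisely here that the one-sidedness of the Lipschitz condition (the map contracts, its inverse may expand) together with the bijective — rather than merely injective — form of the axiom does the work, explaining why $\overline{\BA}_{\rm Lip}(\baire)$ is the natural hypothesis.
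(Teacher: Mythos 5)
Your overall frame (Bartoszy\'nski's slalom characterization of $\add(\Null)$ plus the tree-homomorphism structure of Lipschitz maps) matches the paper's, but you have oriented the application backwards, and the step you defer --- ``width control'' --- is not a technical refinement to be engineered later but exactly where the argument breaks. You place the family to be localized on the \emph{range} side $B$ and propose to read $g_\alpha(n)$ off from preimage prefixes $a \hook m_n$ with $a \in A$. The Lipschitz condition, however, only controls forward images: $a \hook k = a' \hook k$ implies $f(a) \hook k = f(a') \hook k$, and \emph{not} conversely, so the coded points of $B$ may have preimages spread over infinitely many level-$m_n$ nodes of the tree of $A$. Moreover, $A$ and $B$ must be fixed \emph{before} the axiom is invoked, and the axiom gives no control over \emph{which} Lipschitz bijection one gets; no prior ``interleaving'' in the construction of the pair $(A,B)$ can force every possible Lipschitz bijection $A \to B$ to cluster preimages into at most $n$ many level-$m_n$ classes, since that would amount to controlling $f^{-1}$ --- and the inverse of a Lipschitz bijection need not be Lipschitz (the paper itself emphasizes this asymmetry). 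So your $\phi(n)$ comes with no finiteness guarantee, and the proposal as written does not close.

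The repair is to swap the roles, which is what the paper's proof does. Put the arbitrary $\aleph_1$-sized set to be captured (enlarged to be $\aleph_1$-dense) on the \emph{range} side, calling it $A$, and construct the \emph{domain} $B = \bigcup_{s \in \omega^{<\omega}} B_s$, where each $B_s \subseteq [s]$ is an $\aleph_1$-sized set of functions taking only the values $0,1$ beyond $|s|$; this $B$ is narrow but still $\aleph_1$-dense. Applying $\overline{\BA}_{\rm Lip}(\baire)$ gives a Lipschitz $f$ with $f``B = A$, and now narrowness propagates in the direction Lipschitz maps actually control: $B_s$ has at most $2^{n+1}$ prefixes of length $n+1$, hence so does $f``B_s$, so $\varphi_s(n) := \{f(x)(n) \; | \; x \in B_s\}$ is a $2^{n+1}$-slalom totally capturing $f``B_s$. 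Surjectivity of $f$ onto $A$ (the only place the bijective form of the axiom is used) ensures every point of $A$ lies in some $f``B_s$, and merging diagonally, $\varphi(n) := \bigcup_{i < n} \varphi_{s_i}(n)$, yields a single $n2^{n+1}$-slalom eventually capturing all of $A$. Since Bartoszy\'nski's characterization (Fact \ref{slalomfact}) is insensitive to the choice of width function $h$, this gives $\add(\Null) > \aleph_1$. Note also that no block-coding of a family into the reals is needed: one captures the set of reals itself, so your coding layer can be discarded along with the problematic orientation.
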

We need the following characterization of the null ideal due to Bartoszy\'nski. Recall that if $h:\omega \to \omega$ is strictly increasing then an $h$-{\em slalom} is a function $\varphi:\omega \to [\omega]^{<\omega}$ so that for all $n$ we have $|\varphi(n)| \leq h(n)$. If we omit the $h$ then it is implied that ``slalom" means $2^{n+1}$-slalom. Say that a function $f\in \omega^\omega$ is caught by a slalom $\varphi$, in symbols $f \in^* \varphi$ if for all but finitely many $n$ we have $f(n) \in \varphi(n)$. Similarly let us write $f \in \varphi$ if for every $n < \omega$ we have $f(n) \in \varphi(n)$. Finally for a set $A \subseteq \baire$ we say a slalom $\varphi$, {\em captures} $A$ if it eventually captures every element. 

\begin{fact}[Bartoszy\'nski]\label{slalomfact}
    Let $h:\omega \to \omega$ be strictly increasing. For any cardinal $\kappa$ the following are equivalent.\begin{enumerate}
        \item $\kappa < {\rm null}(\mathcal N)$
         \item For every $A \subseteq \baire$ of size $\kappa$ there is an $h$-slalom that eventually captures $A$. 
    \end{enumerate}
\end{fact}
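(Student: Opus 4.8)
The statement is exactly Bartoszy\'{n}ski's localization characterization of $\add(\Null)$ (we read (1) as $\kappa < \add(\Null)$, the smallest Cicho\'{n} characteristic, which is what the present section uses). The plan is to recast the equivalence as the single cardinal identity $\add(\Null) = \mathfrak{sl}_h$, where
\[ \mathfrak{sl}_h = \min\{|A| : A \subseteq \baire \text{ and no } h\text{-slalom captures } A\}. \]
Statement (2) for a given $\kappa$ says precisely that every $A$ of size $\kappa$ is captured, i.e. $\kappa < \mathfrak{sl}_h$, so ``(1) $\Leftrightarrow$ (2) for all $\kappa$'' is the same assertion as $\add(\Null) = \mathfrak{sl}_h$. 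Since any $h'$-slalom with $h' \leq h$ pointwise is an $h$-slalom, the $\mathfrak{sl}_h$ are monotone in $h$, so proving $\mathfrak{sl}_h = \add(\Null)$ for every strictly increasing $h$ simultaneously yields the asserted independence of the choice of $h$. I would obtain the identity by producing a morphism in each direction between the localization relation $L_h = (\baire, \mathbb{S}_h, \in^*)$, where $\mathbb{S}_h$ is the space of $h$-slaloms, and the null-covering relation $(\Null, \Null, \subseteq)$, whose bounding number is easily seen to equal $\add(\Null)$; as $\mathfrak{sl}_h$ is the bounding number of $L_h$, the two morphisms give the two inequalities $\add(\Null) \leq \mathfrak{sl}_h$ and $\mathfrak{sl}_h \leq \add(\Null)$.

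For $\add(\Null) \leq \mathfrak{sl}_h$ I would build a continuous surjection $\theta : \cantor \to \mathbb{S}_h$ coding slaloms by reals, arranged so that for every $f \in \baire$ the set $N_f = \{x \in \cantor : f \notin^* \theta(x)\}$ is Lebesgue null. Given a family $A$ witnessing $\mathfrak{sl}_h$, no single slalom captures all of $A$, which means the sets $B_f = \{\varphi \in \mathbb{S}_h : f \notin^* \varphi\}$ cover $\mathbb{S}_h$; pulling back along the surjection $\theta$, the family $\{N_f : f \in A\}$ consists of $|A|$ null sets whose union is all of $\cantor$, so $\add(\Null) \leq |A| = \mathfrak{sl}_h$.

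For the reverse inequality $\mathfrak{sl}_h \leq \add(\Null)$ I would start from null sets $\{M_\beta : \beta < \add(\Null)\}$ with non-null union and invoke the small-set representation of the null ideal: every null subset of $\cantor$ is contained in one of the form $\{x : \exists^\infty n,\ x \hook I_n \in J_n\}$ with $\sum_n |J_n| 2^{-|I_n|} < \infty$. After passing to a common interval partition $\langle I_n \rangle$ refining the ones attached to the $M_\beta$, I would attach to each $M_\beta$ a function $g_\beta \in \baire$, converting the summability of the blocks $J^\beta_n$ into the slalom-size bound $|\varphi(n)| \leq h(n)$ — here the hypothesis $h \to \infty$ is exactly what supplies the room after regrouping — in such a way that non-nullity of $\bigcup_\beta M_\beta$ becomes the statement that no single $h$-slalom captures $\{g_\beta : \beta < \add(\Null)\}$. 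This exhibits a non-capturable family of size $\add(\Null)$, so $\mathfrak{sl}_h \leq \add(\Null)$.

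The main obstacle is the explicit construction of these two morphisms, and in particular the coding $\theta$ for which every $N_f$ is genuinely null: a naive product measure on slalom space fails, since with $h$ fixed and the range of $f$ unbounded the event ``$\theta(x)$ misses $f(n)$'' need not have summable probability. The fix is the characteristic regrouping trick, replacing single coordinates by blocks $I_n$ so that the growth of $h$ supplies enough measure-theoretic slack to render ``missing $f$ infinitely often'' a null event, and dually to package the small blocks $J^\beta_n$ as slalom values. I would establish the small-set representation as a lemma (one inclusion is Borel--Cantelli, the other a standard covering argument) and then carry out the regrouping in detail; once both morphisms are in place the cardinal identity $\add(\Null) = \mathfrak{sl}_h$, and hence the stated equivalence, follows at once.
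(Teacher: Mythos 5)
The paper itself offers no proof of this Fact (it is quoted from Bartoszy\'nski), so your proposal is to be measured against the standard proof, whose overall architecture --- reading (1) as $\kappa < \add(\Null)$, recasting the equivalence as $\add(\Null) = \mathfrak{sl}_h$, and proving two inequalities via morphisms --- you have correctly identified. Your second direction ($\mathfrak{sl}_h \le \add(\Null)$) is essentially the standard argument: code each null set $M_\beta$ by a summable small cover, turn the cover into a function $g_\beta$, and show that a slalom capturing all the $g_\beta$ would amalgamate the covers into a single null set containing $\bigcup_\beta M_\beta$. One wrinkle: you cannot ``pass to a common interval partition refining the ones attached to the $M_\beta$,'' since there are $\add(\Null)$-many partitions and no common refinement in general; the standard fix is to let each $g_\beta(n)$ code its own pair $(I^\beta_n, J^\beta_n)$ inside a fixed countable coding that builds in the weight normalization, so that the amalgamated set $\{x : \exists^\infty n\, \exists (I,J) \in \varphi(n)\; x \hook I \in J\}$ is null whenever $\varphi$ is an $h$-slalom. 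This is repairable and routine.

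The first direction, however, contains a genuine gap that no amount of regrouping can fix: the object you propose --- a Borel map $\theta$ from a measure space onto the space $\mathbb{S}_h$ of all $h$-slaloms with every $N_f = \{x : f \notin^* \theta(x)\}$ null --- provably does not exist. Pushing the measure forward to a probability measure $\mu$ on $\mathbb{S}_h$, note that for each $n$ we have $\sum_{k \in \omega} \mu(\{\varphi : k \in \varphi(n)\}) = \mathbb{E}_\mu |\varphi(n)| \le h(n) < \infty$, so one may diagonally choose $f(n)$ with $\mu(\{\varphi : f(n) \in \varphi(n)\}) < 2^{-n}$; then $\mu(\{\varphi : f \in^* \varphi\}) \le \sum_m \inf_{n \ge m} \mu(\{\varphi : f(n) \in \varphi(n)\}) = 0$, i.e.\ $N_f$ has \emph{full} measure. (A minor symptom of the same problem: $\cantor$ is compact and $\mathbb{S}_h$ is not, so no continuous surjection exists at all.) Moreover, what your construction would deliver, were it possible, is a covering of the domain by the $|A|$ null sets $N_f$, hence $\cov(\Null) \le \mathfrak{sl}_h$; combined with your other direction this would yield $\cov(\Null) \le \add(\Null)$ outright, which fails in the random real model where $\add(\Null) = \aleph_1 < \cov(\Null) = \mfc$. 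This shows the capturing direction cannot be uniform in the way you envisage: in the actual proof one first reduces to families bounded by a single $g$ (using $\add(\Null) \le \mfb$, or building the Tukey morphism $f \mapsto N_f$, $M \mapsto \varphi_M$ directly with the auxiliary measure space \emph{depending} on the family), so the measure lives on a compact space of slaloms confined below $g$, and the conclusion is extracted from ``a union of fewer than $\add(\Null)$ null sets omits a point'' rather than from covering a single fixed space.
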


Note the point is that the cardinal doesn't depend on which $h$ we choose - however it must be uniform for all $A$ of size ${<}\kappa$. 

\begin{proof}[Proof of Lemma \ref{add}]
    Assume $\overline{\BA}_{\rm Lip}(\baire)$. We will show that every set of size $\aleph_1$ is caught in an $h$-slalom for $h(n) = n2^{n+1}$. Let $A$ be an arbitrary set of set $\aleph_1$. By possibly making it bigger we can assume that $A$ is $\aleph_1$-dense. 
    Let $B \subseteq \baire$ defined as follows. For each $s \in \omega^{<\omega}$ let $B_s \subseteq [s]$ be an $\aleph_1$-sized set of $x \supseteq s$ so that if $k > {\rm dom}(s)$ then $x(k) = 0$ or $x(k) = 1$. Let $B = \bigcup_{s \in \omega^{<\omega}} B_s$. In short, $B$ is an $\aleph_1$-dense set of functions which are eventually bounded by $2$.

    By assumption there is a Lipschitz $f:\baire \to \baire$ so that $f``B = A$. Fix $s \in \omega^{<\omega}$ and let $\varphi_s:\omega \to [\omega]^{<\omega}$ be defined by $\varphi(n) = \{m \; | \; \exists x \in B_{s} \; f(x)(n) = m\}$. 

    \begin{claim}
        This is an $h$-slalom for $h:\omega \to \omega$ defined by $h(n) = 2^{n+1}$ for all $n < \omega$.
    \end{claim}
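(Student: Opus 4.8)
The plan is to verify the two defining properties of an $h$-slalom, namely that $\varphi_s$ takes values in $[\omega]^{<\omega}$ and that $|\varphi_s(n)| \le h(n) = 2^{n+1}$ for every $n < \omega$. Since finiteness of each $\varphi_s(n)$ will fall out of the cardinality bound, the entire content of the claim is the inequality $|\varphi_s(n)| \le 2^{n+1}$.

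First I would exploit that $f$ is Lipschitz with constant $1$. By Lemma \ref{homomorphism representation} the induced homomorphism $\tilde{f}$ satisfies $f(x) \hook k = \tilde{f}(x \hook k)$ for all $x \in \baire$ and all $k < \omega$; taking $k = n+1$, this shows that the value $f(x)(n)$ depends only on $x \hook (n+1)$. Hence the assignment $x \mapsto f(x)(n)$ factors through $x \mapsto x \hook (n+1)$, and consequently
$$
|\varphi_s(n)| \;=\; \bigl|\{\, f(x)(n) \;:\; x \in B_s \,\}\bigr| \;\le\; \bigl|\{\, x \hook (n+1) \;:\; x \in B_s \,\}\bigr|.
$$
So it suffices to bound the number of distinct length-$(n+1)$ initial segments realized by elements of $B_s$.

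That count is immediate from the definition of $B_s$. Every $x \in B_s$ extends $s$, so $x \hook (n+1)$ is completely determined on the first ${\rm dom}(s)$ coordinates, where it agrees with $s$, while on each of the remaining coordinates below $n+1$ it takes only the values $0$ or $1$. Therefore $x \hook (n+1)$ ranges over a set of size at most $2^{n+1}$ (and in fact at most $2^{\,n+1-{\rm dom}(s)}$ once $n \ge {\rm dom}(s)$, with a unique value when $n+1 \le {\rm dom}(s)$). Combining this with the displayed inequality yields $|\varphi_s(n)| \le 2^{n+1}$, which is exactly the assertion of the claim.

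The argument is essentially a counting estimate, so I do not anticipate any serious obstacle; the only step requiring genuine care is the reduction in the second paragraph, i.e. the correct use of the Lipschitz property to conclude that $f(x)(n)$ is a function of $x \hook (n+1)$ alone rather than of all of $x$. One should also keep track of the coordinate indexing in the definition of $B_s$ so as to be certain the final bound is $2^{n+1}$ rather than a larger power of two.
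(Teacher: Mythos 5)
Your proof is correct and is essentially the paper's own argument: both use the Lipschitz property (via the homomorphism of Lemma \ref{homomorphism representation}) to reduce counting $|\varphi_s(n)|$ to counting the length-$(n+1)$ initial segments realized in $B_s$, which is at most $2^{n+1-|s|} \le 2^{n+1}$ since elements of $B_s$ extend $s$ and are $\{0,1\}$-valued beyond $|s|$. Your treatment is in fact marginally more complete, since the paper's proof only addresses $n > |s|$ while you also note the trivial case $n+1 \le |s|$.
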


    \begin{proof}
        Let $|s| = n_s$. For each $n_s < n < \omega$ there are only $2^{n+1 - n_s}$ many $t \in \omega^{n+1}$ so that $x \hook (n+1) = s$ for each $x \in B_{s}$ since each such $x$ is bounded by $2$ above the length of $s$. By the defining property of the Lipschitz function that means there are at most $2^{n+1 - n_s}$ many $s \in \omega^{n+1}$ so that $s = f(x) \hook n+1$ and hence there are only at most $2^{n+1 - n_s}$ many $m$ so that $\exists x \in B_{s} \; f(x)(n) = m$.
    \end{proof}
    
Now observe that if $x \in B_{s}$ then for every $n < \omega$ we have $f(x)(n) \in \varphi_s(n)$ by construction. In other words, for each $s \in \omega^{<\omega}$ the forward image $f``B_s$ is caught (totally, not eventually) by $\varphi_s$. In particular there are countably many slaloms $\{\varphi_s\; | \; s \in \omega^{<\omega}\}$ so that every element of $A$ is totally caught by (at least) one of them. Now enumerate $\omega^{<\omega}$ as $\{s_n\; | \; n < \omega\}$ and let $\varphi ( n) =\bigcup_{i < n} \varphi_{s_i} (n)$. This is a $n2^{n+1}$-slalom which eventually captures every element of $A$, completing the proof.    
\end{proof}

\subsection{The Case of $\cantor$}
We now turn to the harder, but (apparently) stronger theorem that the conclusion of Theorem \ref{add} follows with $\baire$ replaced by $\cantor$.
\begin{theorem}
    $\overline{\BA}_{\rm Lip}(\cantor)$ implies $\add(\Null) > \aleph_1$. \label{add2}
\end{theorem}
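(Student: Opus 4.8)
The plan is to deduce the conclusion from a block version of Bartoszy\'nski's localization characterization (Fact \ref{slalomfact}). Fix a partition of $\omega$ into consecutive finite intervals $I_0, I_1, \dots$ with $|I_n|$ growing fast, say $|I_n| = 3n+5$, and identify $\cantor$ with $\prod_{n<\omega} 2^{I_n}$; for $x \in \cantor$ write $x[n] := x \hook I_n \in 2^{I_n}$. It suffices, by the product-space form of Fact \ref{slalomfact}, to show that every $\aleph_1$-sized $A \subseteq \cantor$ is eventually localized by a single slalom $\psi$ with $\psi(n) \subseteq 2^{I_n}$ and $|\psi(n)| \leq h(n)$, where $h(n) = (n+1)2^{n+1}$: this is valid precisely because $|I_n|$ was chosen so that $\sum_n h(n)/2^{|I_n|} < \infty$, and $\add(\Null)$ is computed on $\cantor$ as on $\baire$. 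As in Lemma \ref{add}, the set $A$ to be localized will play the role of the \emph{target} of the axiom: we write it as a countable union of Lipschitz images of a sparse \emph{source}, and read off a cheap slalom from each image.

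First I would pad $A$ to an $\aleph_1$-dense $A'$ and construct a sparse $\aleph_1$-dense source $S$. The essential new difficulty over the $\baire$ case is that on $\cantor$ one cannot thin the values of a single coordinate (everything is already in $\{0,1\}$); instead one must thin the admissible \emph{block} values cone by cone. Concretely, for each finite block sequence $s \in \prod_{j<m} 2^{I_j}$ fix, for every $j \geq m$, a two-element set $V^s_j \subseteq 2^{I_j}$, and let $S_s$ be an $\aleph_1$-sized subset of $\{x \in [s] \mid x[j] \in V^s_j \text{ for all } j \geq m\}$. Put $S = \bigcup_s S_s$, the union ranging over all finite block sequences $s$. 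Although each $S_s$ is block-sparse (hence nowhere dense), the union $S$ is $\aleph_1$-dense: every basic open $[t]$ contains $S_t$, which has size $\aleph_1$, while $|S| = \aleph_1$ as a countable union of $\aleph_1$-sized pieces.

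Next I would apply $\overline{\BA}_{\rm Lip}(\cantor)$ with source $S$ and target $A'$ to obtain countably many Lipschitz $f_k : \cantor \to \cantor$ with $A' = \bigcup_{k<\omega} f_k``S$. For each pair $(k,s)$ set $\psi_{k,s}(n) = \{\, f_k(x)[n] \mid x \in S_s \,\}$. Since $f_k$ is Lipschitz, $f_k(x)[n]$ depends only on $x \hook (\max I_n + 1)$, i.e.\ on the blocks $x[0], \dots, x[n]$; and for $x \in S_s$ (with $s$ of block length $m$) these blocks realize at most $2^{n-m+1} \leq 2^{n+1}$ values. Hence $|\psi_{k,s}(n)| \leq 2^{n+1}$ and $f_k``S_s$ is \emph{totally} localized by $\psi_{k,s}$. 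As $A' = \bigcup_{k,s} f_k``S_s$, every point of $A'$ is totally localized by one of these countably many slaloms; enumerating them as $\{\psi_i\}_{i<\omega}$ and diagonalizing via $\psi(n) = \bigcup_{i \leq n} \psi_i(n)$ produces a single slalom of width $\leq (n+1)2^{n+1} = h(n)$ that eventually localizes $A' \supseteq A$. By the cited characterization this yields $\add(\Null) > \aleph_1$.

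The main obstacle is precisely this passage from pointwise sparseness to block sparseness, which has no analogue in Lemma \ref{add}. One must choose the interval partition $\{I_n\}$ so that the Lipschitz images of the block-sparse source carry slaloms whose width $(n+1)2^{n+1}$ is simultaneously forced by the Lipschitz property and small enough relative to the block alphabet size $2^{|I_n|}$ for Bartoszy\'nski's characterization to apply. Checking that the cone-by-cone thinning nonetheless leaves the source $\aleph_1$-dense (so that the axiom is applicable) and bookkeeping the countably many slaloms into one are the remaining points, but both are routine once the block framework is in place.
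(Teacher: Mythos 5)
There is a genuine gap, and it is located exactly at your reduction step: the claimed ``product-space form of Fact \ref{slalomfact}'' is not valid. Bartoszy\'nski's characterization of $\add(\Null)$ requires that a single slalom eventually capture an \emph{arbitrary} family of size $\kappa$ in $\baire$, i.e.\ a family of possibly unbounded functions. Your statement only localizes families living inside the compact product $\prod_{n<\omega}2^{I_n}$ by block-slaloms of the fixed width $h(n)=(n+1)2^{n+1}$. That bounded-localization property is provably \emph{weaker} than $\add(\Null)>\aleph_1$: if $\add(\Null)>\aleph_1$ then any $\aleph_1$-sized $A\subseteq\prod_n 2^{I_n}$ is captured by an $h$-slalom in $\baire$ by Fact \ref{slalomfact}, and intersecting its values with $2^{I_n}$ gives a block-slalom; but the converse implication is exactly what would need proof, and it fails (localization of compact products can be forced by iterations that add no dominating reals, keeping $\mfb=\aleph_1$ and hence $\add(\Null)=\aleph_1$). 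What your argument actually establishes --- and this part of the combinatorics (the block-sparse source, the width bound $2^{n+1}$ from the Lipschitz property, the diagonalization) is correct --- is that every $\aleph_1$-sized subset of $\cantor$ lies in a null set of slalom type, i.e.\ $\non(\Null)>\aleph_1$. A quick sanity check shows the conclusion $\add(\Null)>\aleph_1$ cannot come out of this framework alone: $\ZFC$ proves $\add(\Null)\leq\mfb$, so any proof must produce bounding information, yet every object in your argument (source, target, slaloms) lives in a compact space. This is also why the paper's Baire-space argument (Lemma \ref{add}) goes through directly --- there the target $A$ is an arbitrary $\aleph_1$-sized subset of $\baire$, unbounded functions included, so Fact \ref{slalomfact} genuinely applies --- while the Cantor case cannot be handled by mimicking it with blocks.

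The paper's actual route for $\cantor$ is structurally different in two ways. First, instead of a fixed exponential width, it uses Shelah's characterization of \emph{null-additive} sets (Theorem \ref{shelahnulladditive}): $X$ is null additive iff for \emph{every} corset $c$ --- in particular arbitrarily slowly growing $c$ --- $X$ is covered by countably many trees of width $c$. The source set is built inside countably many trees of width $c$ for the given $c$, and the key observation (via Lemma \ref{homomorphism representation}) is that a Lipschitz image of a width-$c$ tree is again a width-$c$ tree; since Lipschitz maps cannot shrink tree width, the source must already be as narrow as the covering you want, which is precisely what your two-element block sets $V^s_j$ do not achieve (they produce images of block-width $2^{n+1}$, far too wide for null additivity). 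Second, having shown every $\aleph_1$-sized set is null additive, the paper still needs $\mfb>\aleph_1$ (Lemma \ref{btheorem}) and combines the two via $\add(\Null)=\min\{\mfb,\add^*(\Null)\}$. Note that even repairing your argument by importing $\mfb>\aleph_1$ would not suffice: $\non(\Null)>\aleph_1$ together with $\mfb>\aleph_1$ is still consistent with $\add(\Null)=\aleph_1$ (e.g.\ under $\mfp>\aleph_1$ with $\add(\Null)=\aleph_1$), so the fixed-width localization must be replaced by the arbitrary-corset covering property.
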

Recall that a set $A \subseteq 2^\omega$ is {\em null additive} if for every null set $X \subseteq 2^\omega$ the collection of translations $A + X:= \{a + x\; | \; a \in A \, x \in X\}$ is null. Shelah proved in \cite{nulladditive} a combinatorial characterization of null additivity that is not dissimilar to that of Fact \ref{slalomfact}. Before giving this let us recall the relevant definitions. 
\begin{definition}
    A {\em corset} is a function $c:\omega \to \omega \setminus \{0\}$ which is non decreasing and converges to infinity. Given a tree $T \subseteq 2^{< \omega}$ and a corset $c$, we say that $T$ has {\em width $c$} if for all $n < \omega$ $|2^n \cap T| \leq c(n)$. 
\end{definition}

We now state Shelah's theorem.
\begin{theorem}[See Theorem 13 of \cite{nulladditive}] \label{shelahnulladditive}
   For every $X \subseteq \cantor$ the following are equivalent.
   \begin{enumerate}
       \item $X$ is null additive.
       \item For every corset $c$ there are trees $S_m$, $m < \omega$ of width $c$ so that $X \subseteq \bigcup_{m \in \omega} [S_m]$. 
   \end{enumerate}
\end{theorem}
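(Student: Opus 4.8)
Since this is Shelah's theorem I would ultimately cite \cite{nulladditive}; nevertheless the argument runs as follows, and the plan is to prove the two implications separately, in each case passing through the equivalent ``block'' description of the two sides. Fix once and for all a fast increasing sequence of integers $l_0 < l_1 < \cdots$ and write $I_k = [l_k, l_{k+1})$; a point $y \in 2^\omega$ is determined by its blocks $y \upharpoonright I_k \in 2^{I_k}$. On the tree side, a tree $S \subseteq 2^{<\omega}$ of width $c$ restricts, on each block $I_k$, the set $\{\, x \upharpoonright I_k : x \in [S]\,\}$ to at most $c(l_{k+1})$ patterns, since these are governed by the at most $c(l_{k+1})$ nodes of $S$ at level $l_{k+1}$; conversely a family of block sets $J_k \subseteq 2^{I_k}$ can be assembled into a tree whose level-$n$ width is the product of the $|J_k|$ for blocks below $n$, and the ``all but finitely many blocks'' clause is absorbed by taking a countable union of such trees, one for each finite choice of exceptional patterns. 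Thus (2) is equivalent to the statement that for every corset $c$ the block restrictions of $X$ are eventually captured by slaloms whose sizes grow no faster than $c$ permits.

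For the implication (2)\,$\Rightarrow$\,(1) I would argue directly that $X + N$ is null for an arbitrary null $N$. Since a countable union of null sets is null and $X \subseteq \bigcup_m [S_m]$, it suffices to show $[S] + N$ is null for a single width-$c$ tree $S$. Choosing the $l_k$ so that $N \subseteq \{\, z : \exists^\infty k\ z\upharpoonright I_k \in J_k\,\}$ with $\sum_k |J_k|\,2^{-|I_k|} < \infty$ (the standard block presentation of a null set), one checks that if $z \in [S] + N$ then for infinitely many $k$ the block $z \upharpoonright I_k$ lies in $Y_k + J_k$, where $Y_k = \{\, w \upharpoonright I_k : w \in [S]\,\}$ has size at most $c(l_{k+1})$. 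Hence $[S] + N$ lies in a null set provided $\sum_k c(l_{k+1})\,|J_k|\,2^{-|I_k|} < \infty$. The one subtlety is that $c$ is quantified before $N$: given $N$, I would first fix the block presentation, so that $a_k := |J_k|\,2^{-|I_k|}$ is summable, then invoke the elementary fact that any summable nonnegative series admits a nondecreasing sequence $b_k \to \infty$ with $\sum_k a_k b_k < \infty$ (take $b_k = R_k^{-1/2}$ for $R_k = \sum_{j\ge k} a_j$ and telescope), and finally apply hypothesis (2) for a corset $c$ with $c(l_{k+1}) \le b_k$.

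The implication (1)\,$\Rightarrow$\,(2) is the substantial direction and I expect it to be the main obstacle. Fix a corset $c$; the goal is to manufacture, from null-additivity alone, block-slaloms capturing $X$ of the size permitted by $c$. The idea is to feed null-additivity a carefully designed witnessing null set: choose blocks $I_k$ and, on each, a set $H_k \subseteq 2^{I_k}$ of relative density tending to $0$, so that $N = \{\, y : \forall^\infty k\ y \upharpoonright I_k \in H_k\,\}$ is null but ``spread out'' at exactly the resolution dictated by $c$. Null-additivity then gives that $X + N$ is null, hence $X + N$ is contained in a block set $\{\, z : \exists^\infty k\ z\upharpoonright I_k \in K_k\,\}$ with the $K_k$ of controlled size. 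For a fixed $x \in X$ and $y \in N$, on the cofinitely many blocks where $y\upharpoonright I_k \in H_k$ and on the infinitely many where $(x+y)\upharpoonright I_k \in K_k$ one has $x \upharpoonright I_k = (x+y)\upharpoonright I_k + y\upharpoonright I_k \in K_k + H_k$, so the candidate slaloms are $S(k) = K_k + H_k$, of size at most $|K_k|\cdot|H_k|$. I would then run the block-to-tree assembly from the first paragraph to convert these into the required countably many trees of width $c$.

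The crux, and where Shelah's argument is genuinely clever, is making this extraction yield capture for \emph{all but finitely many} blocks uniformly in $x$, rather than merely for the infinitely many blocks handed back by the cover of $X+N$: this forces a simultaneous choice of the witnessing null set $N$, the blocking $\langle l_k\rangle$, and the densities $|H_k|$ so that the extracted slalom sizes $|K_k|\cdot|H_k|$ stay within the width budget of $c$. For this delicate bookkeeping I would defer to the construction in \cite{nulladditive}.
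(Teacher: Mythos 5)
The paper does not prove this statement at all: it is quoted verbatim as Theorem 13 of Shelah's paper \cite{nulladditive}, so your ultimate reliance on that citation is exactly the paper's own approach, and your supplementary sketch is consistent with it --- the $(2)\Rightarrow(1)$ direction (block presentation of the null set, Borel--Cantelli, and the weighting trick $b_k = R_k^{-1/2}$ to handle the quantifier order on the corset) is sound as written. You also correctly isolate, and defer to \cite{nulladditive}, the one genuinely hard point in $(1)\Rightarrow(2)$: the cover of $X+N$ only hands back $\exists^\infty$-capture of the blocks $x \upharpoonright I_k$ in $K_k + H_k$, and upgrading this to the $\forall^\infty$-capture needed for the tree assembly, uniformly in $x$ and within the width budget of $c$, is precisely Shelah's contribution.
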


We aim to show first the following. 
\begin{lemma}
    If $\overline{\BA}_{\rm Lip}(2^\omega)$ holds then every subset of $2^\omega$ of size $\aleph_1$ is null additive. 
\end{lemma}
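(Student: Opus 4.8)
The plan is to reduce the statement to Shelah's combinatorial characterisation of null additivity (Theorem \ref{shelahnulladditive}) and then feed a carefully engineered ``thin'' domain into $\overline{\BA}_{\rm Lip}(\cantor)$, in direct analogy with the slalom argument of Lemma \ref{add}. Fix $X \subseteq \cantor$ with $|X| = \aleph_1$; enlarging $X$ if necessary we may assume $X =: B$ is $\aleph_1$-dense. By Theorem \ref{shelahnulladditive} it then suffices, given an arbitrary corset $c$, to exhibit countably many trees of width $c$ whose branches together cover $B$.

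The heart of the matter is to build an $\aleph_1$-dense set $A$ all of whose points lie inside trees of width $c$. For each $s \in 2^{<\omega}$ I would construct a perfect tree $T_s \subseteq 2^{<\omega}$ with stem $s$ (so $[T_s] \subseteq [s]$) and width at most $c$, i.e.\ $|2^n \cap T_s| \le c(n)$ for all $n$. Since $c$ is non-decreasing and tends to infinity, one can choose levels $|s| < n_0 < n_1 < \cdots$ growing fast enough that $2^{k+1} \le c(n)$ for every $n > n_k$, and let $T_s$ split fully (both immediate successors present) exactly at the levels $n_k$ and have a unique successor elsewhere. The width of $T_s$ is then $2^{k+1}$ on $(n_k, n_{k+1}]$, which stays below $c$, while $T_s$ is perfect, so $|[T_s]| = 2^{\aleph_0} \ge \aleph_1$. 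Choosing $A_s \subseteq [T_s]$ of size exactly $\aleph_1$ and setting $A = \bigcup_{s \in 2^{<\omega}} A_s$ yields an $\aleph_1$-dense set of size $\aleph_1$ (each cone $[u]$ already meets $A_u$ in $\aleph_1$ points), with each $A_s$ sitting inside the width-$c$ tree $T_s$.

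Now apply $\overline{\BA}_{\rm Lip}(\cantor)$ to $A$ and $B$ to obtain Lipschitz maps $f_n : \cantor \to \cantor$, $n < \omega$, with $B = \bigcup_{n} f_n``A = \bigcup_{n}\bigcup_{s} f_n``A_s$. The decisive observation is that a Lipschitz map cannot increase tree width: by Lemma \ref{homomorphism representation} each $f_n$ is induced by a level- and order-preserving homomorphism $\tilde f_n$, the image $S_{n,s} := \tilde f_n``T_s$ is again a tree, and $\tilde f_n$ maps the $n$-th level of $T_s$ onto the $n$-th level of $S_{n,s}$, so $|2^n \cap S_{n,s}| \le |2^n \cap T_s| \le c(n)$; hence $S_{n,s}$ has width $c$. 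Moreover $f_n(x) \hook m = \tilde f_n(x \hook m) \in S_{n,s}$ for every $x \in A_s$ and $m < \omega$, so $f_n``A_s \subseteq [S_{n,s}]$. Consequently $X = B \subseteq \bigcup_{n,s} [S_{n,s}]$, a countable family of branch-sets of width-$c$ trees, and since $c$ was arbitrary, Theorem \ref{shelahnulladditive} gives that $X$ is null additive.

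The step I expect to be the main obstacle is the construction of $A$: because a width-$c$ tree with $c$ increasing slowly is nowhere dense and null, no single such tree---nor countably many of them confined to a fixed cone---can be $\aleph_1$-dense, so the density of $A$ must be manufactured by distributing the thin trees $T_s$ across all cones $[s]$, while the uncountable branching needed inside each $T_s$ has to be purchased gradually out of the growth of $c$. By contrast, the stability of width under Lipschitz images is immediate from the homomorphism representation, and it is precisely this asymmetry---thinness preserved going forward, density supplied cone-by-cone---that makes the Cantor-space argument run, mirroring the role of the decomposition $B = \bigcup_s B_s$ in Lemma \ref{add}.
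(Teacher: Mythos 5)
Your proof is correct and follows essentially the same route as the paper's: cone-by-cone width-$c$ trees hosting an $\aleph_1$-dense set $A$, preservation of width under the homomorphisms induced by Lipschitz maps (Lemma \ref{homomorphism representation}), and then Shelah's characterization (Theorem \ref{shelahnulladditive}). The only difference is that you spell out the construction of the perfect width-$c$ trees (splitting only at sparse levels), a detail the paper leaves implicit when it simply ``fixes'' such trees and picks $\aleph_1$-dense subsets of their branch sets.
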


\begin{proof}
    Clearly it suffices to show the result for $\aleph_1$-dense sets. Fix an $\aleph_1$-dense set $B \subseteq \cantor$ and a corset $c:\omega\to \omega$. Enumerate $2^{<\omega}$ as $\{t_n\mid n < \omega\}$ and for each $n < \omega$ fix a tree $T_n$ of width $c$ contained in $[t_n]$. For each $n < \omega$ pick a set $A_n \subseteq [T_n]$ which is $\aleph_1$-dense in the space $[T_n]$. Let $A = \bigcup_{n < \omega} A_n$. Clearly $A$ is $\aleph_1$-dense an can be covered by $\bigcup_{n < \omega}[T_n]$. First, let $f:\cantor \to \cantor$ be an arbitrary Lipschitz function. By Lemma \ref{homomorphism representation} this is induced by a homomorphism $\tilde{f}:2^{<\omega} \to 2^{< \omega}$. Moreover given any tree $T \subseteq 2^{<\omega}$ the image of the restriction of this homomorphism to $T$ is again a tree.

    \begin{claim}
        If $T \subseteq 2^{<\omega}$ is a tree of width $c$ and $g:T \to 2^{<\omega}$ is a homomorphism then ${\rm Im}(g)$ has width $c$ as well.
    \end{claim}

    \begin{proof}
        For each $n < \omega$ we have by assumption that $g\hook T \cap 2^n$ is a finite function with domain a set of size at most $c(n)$ hence its range also has size at most $c(n)$. 
    \end{proof}
    
       Now, applying $\overline{\BA}_{\rm Lip}(\cantor)$ we have countably many Lipschitz functions $f_n:\cantor \to \cantor$ so that $B = \bigcup_{n < \omega} f_n``A$. For each $n, m < \omega$ therefore, by the claim we have that $S_{n, m}:= \tilde{f}_m``T_n$ is a tree of width $c$. But now $B \subseteq \bigcup_{n, m \in \omega} [S_{n, m}]$ and therefore we have found countably many trees of width $c$ whose branches cover $B$. Since $c$ was arbitrary and did not depend on $B$, it must be the case that $B$ can be covered by countably many trees of {\em any} fixed corset width. By Theorem \ref{shelahnulladditive} this completes the proof.
\end{proof}

Theorem \ref{add2} now follows as a corollary since, by \cite[Theorem 2.7.13]{BarJu95} we have that $\add(\Null) = {\rm min}\{\mfb, \add^*(\Null)\}$ where $\add^*(\Null)$ is the minimum size of a non null additive set. Since we already know that $\BA_{\rm Lip}(\cantor)$ implies $\mfb > \aleph_1$ it follows $\add(\Null) > \aleph_1$ as well. The interest in this result - besides being an application of our axioms is the following.
\begin{corollary}
    $\mfp > \aleph_1$ does not imply $\overline{\BA}_{\rm Lip}(\cantor)$ in particular $\BA(\cantor)$ does not imply its Lipschitz version. 
\end{corollary}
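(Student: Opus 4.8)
The plan is to exhibit a single model of $\ZFC$ in which $\mfp > \aleph_1$ holds while $\add(\Null) = \aleph_1$, and then to read both non-implications straight off the results already in hand. The two ingredients are Fact \ref{BAfacts}(1), which gives $\BA_\kappa(\cantor)$ for every $\kappa < \mfp$, and Theorem \ref{add2}, which says that $\overline{\BA}_{\rm Lip}(\cantor)$ forces $\add(\Null) > \aleph_1$. Taking $\kappa = \aleph_1$ in the former, $\mfp > \aleph_1$ yields exactly $\BA_{\aleph_1}(\cantor) = \BA(\cantor)$; and the contrapositive of the latter says that $\add(\Null) = \aleph_1$ precludes $\overline{\BA}_{\rm Lip}(\cantor)$. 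Thus in such a model $\mfp > \aleph_1$ and the (weaker) assertion $\BA(\cantor)$ both hold, while $\overline{\BA}_{\rm Lip}(\cantor)$ fails; this is precisely what the corollary asserts, the single model witnessing both non-implications at once.

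First I would invoke the independence of $\mfp$ and $\add(\Null)$ recalled in the preliminaries (see also \cite{BlassHB}, \cite{BarJu95}). Of the two consistent strict inequalities, the relevant one is $\add(\Null) < \mfp$, and the standard witness is produced by starting from a model of $\CH$ (so that $\add(\Null) = \aleph_1$) and forcing $\MA_{\aleph_1}(\sigma\text{-centered})$, equivalently $\mfp = \aleph_2 = \mfc$, by a finite support iteration of $\sigma$-centered posets. The content of the construction, and the reason it lands where we want, is that this iteration leaves the ground model witness to $\add(\Null) = \aleph_1$ intact: the iteration adds no slalom capturing all of the old reals, so $\add(\Null)$ stays at $\aleph_1$ even as $\mfp$ is pushed to $\aleph_2$. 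Fixing such a model, I then simply combine the two inputs: by Fact \ref{BAfacts}(1) with $\kappa = \aleph_1$ we get $\BA(\cantor)$, hence also $\mfp > \aleph_1$; and were $\overline{\BA}_{\rm Lip}(\cantor)$ to hold, Theorem \ref{add2} would give $\add(\Null) > \aleph_1$, contradicting $\add(\Null) = \aleph_1$.

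The main obstacle is the one genuinely forcing-theoretic point: one must check that $\add(\Null) < \mfp$ can be witnessed with $\add(\Null) = \aleph_1$, and not merely with both sides large. This is a real constraint, since $\mfp > \aleph_1$ already forces $\mfb > \aleph_1$ (as $\mfp \le \mfb$) and $\cov(\M) > \aleph_1$ (as $\mfp \le \cov(\M)$, because countable posets are $\sigma$-centered and $\MA_{<\mfp}(\text{countable})$ is equivalent to $\cov(\M) \geq \mfp$), whence $\add(\M) = \min\{\mfb, \cov(\M)\} > \aleph_1$. Consequently the target model must \emph{genuinely} separate $\add(\Null) = \aleph_1$ from $\add(\M) > \aleph_1$, rather than keeping the whole Cicho\'n diagram at $\aleph_1$. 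This separation is exactly the preservation property delivered by the $\sigma$-centered iteration and is the substantive part of the cited independence result; once it is granted, the remainder of the corollary is a purely formal combination of Fact \ref{BAfacts}(1) and Theorem \ref{add2}.
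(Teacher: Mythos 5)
Your proposal is correct and follows essentially the same route as the paper: the paper also deduces the corollary from Theorem \ref{add2} together with the known fact that $\mfp > \aleph_1$ is consistent with $\cov(\Null) = \aleph_1$ (hence with $\add(\Null) = \aleph_1$), with Fact \ref{BAfacts}(1) supplying $\BA(\cantor)$ in such a model. The only difference is one of detail: the paper simply cites this non-implication in one line, whereas you sketch the $\sigma$-centered iteration witnessing it, which is a legitimate (and standard) way to justify the same step.
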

This is simply because $\mfp > \aleph_1$ does not imply even the covering number of $\Null$ let alone $\add(\Null)$ is greater than $\aleph_1$. A stronger result along these lines will be proved in the next section.

\section{Avoiding Sets and Fragments of $\MA$}

In this final section we consider whether the analogue of the main result of \cite{AvrahamShelah81} holds for the Lipschitz axioms - i.e. does $\MA$ imply the failure of any of the Lipschitz axioms. While this is still open we have the following partial result which states that even the weakest $\BA_{\rm Lip}(2^\omega)$ is not implied by a large fragment of $\MA$. 

\begin{theorem}
    $\BA_{\rm Lip}(2^\omega)$ is not implied by $\MA ({\rm Knaster}) \, +$ ``every (wide) Aronszajn tree is special". \label{noimplicationofMA}
\end{theorem}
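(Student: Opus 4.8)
The plan is to build, by a finite support iteration, a model of $\MA({\rm Knaster})$ in which every (wide) Aronszajn tree is special but in which two fixed $\aleph_1$-dense sets $A, B \subseteq \cantor$ admit no Lipschitz injection $A \to B$, thereby refuting $\BA_{\rm Lip}(\cantor)$. I would start from a ground model of $\CH$ (in fact of $\diamondsuit$) and in it construct the pair $(A, B)$ carrying an auxiliary combinatorial structure --- an ``avoiding'' coloring on $\omega_1$ --- designed so that the nonexistence of a Lipschitz injection is robust under Knaster forcing. The iteration $\langle \mathbb P_\alpha, \dot{\mathbb Q}_\alpha : \alpha < \omega_2\rangle$ then runs with bookkeeping over all Knaster posets and all (wide) Aronszajn trees appearing in intermediate models, each $\dot{\mathbb Q}_\alpha$ being either a Knaster poset (to meet an instance of $\MA({\rm Knaster})$) or the standard finite-condition specializing forcing of a tree, which is well known to be Knaster. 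Since a finite support iteration of Knaster posets is Knaster, $\mathbb P_{\omega_2}$ is itself Knaster, and the generic extension satisfies $\MA({\rm Knaster})$ together with ``every (wide) Aronszajn tree is special''.

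The whole argument reduces to a single preservation lemma: \emph{if $(A,B)$ has the avoiding property then no Knaster poset $\mathbb Q$ adds a Lipschitz injection $A \to B$}. Applied to the Knaster poset $\mathbb P_{\omega_2}$ this shows that $A, B$ --- which remain $\aleph_1$-dense, since the iteration is ccc and the basic opens of $\cantor$ are coded in the ground model --- still witness $\neg\BA_{\rm Lip}(\cantor)$ in the final model. By Lemmas \ref{homomorphism representation} and \ref{lifting lipschitz} it is enough to defeat total Lipschitz maps $f:\cantor\to\cantor$, i.e.\ homomorphisms $\tilde f$ of $2^{<\omega}$, so I must rule out that some such $f$ is injective on $A$ with $f``A \subseteq B$. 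The essential point is that one cannot enumerate all Knaster posets (there are $2^{\aleph_1}$ of them), so the avoiding property must be a purely combinatorial feature of $(A,B)$ that yields the preservation lemma uniformly, rather than a diagonalization against a list of posets.

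To prove the preservation lemma I would use the Knaster property to convert a hypothetical name into ground-model data. Suppose $\mathbb Q$ is Knaster and $\dot f$ is forced to be a Lipschitz injection of $A$ into $B$. For each $a \in A$ choose, in $V$, a condition $p_a$, a level $k_a$ and a node $t_a = b_a \upharpoonright k_a$ with $b_a \in B$ such that $p_a \Vdash \dot f(a)\upharpoonright k_a = t_a$, taking $k_a$ large; all of this is definable in $V$ from $\mathbb Q$ and $\dot f$. By the Knaster property there is an uncountable $Z \subseteq A$ with $\{p_a : a \in Z\}$ pairwise compatible, so that for any $a, a' \in Z$ the approximations $t_a, t_{a'}$ are simultaneously forceable: by the Lipschitz property this forces $t_a, t_{a'}$ to cohere over the common initial segment of $a, a'$, and by injectivity it forces the two images eventually to separate. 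The construction of $(A,B)$ under $\diamondsuit$ is arranged precisely so that an uncountable $Z$ carrying such a coherent, separating system of $B$-approximations contradicts the avoiding coloring --- a strong coloring on $\omega_1$ for which no uncountable set can realize the structured pattern that a coherent Lipschitz matching into $B$ would impose.

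The main obstacle is exactly the design of this avoiding coloring and the placement of $A$ and $B$ so that three things hold at once: (i) both sets are $\aleph_1$-dense; (ii) any Lipschitz map injective on a dense subset of $A$ and landing in $B$ is forced, over common initial segments, to induce a definite pattern on the coloring; and (iii) that pattern is one a strong (Todor\v{c}evi\'c-walk style) coloring provably avoids on every uncountable set, \emph{using only pairwise information}, so that the Knaster property --- which delivers pairwise, not $n$-wise, compatibility --- suffices. Contracting Lipschitz maps are far more flexible than isometries (for instance the shift $x \mapsto c^\frown x$ changes the parity of first-difference positions, so the even/odd device of Proposition \ref{counterexample} is useless here), so the coloring must constrain the homomorphism $\tilde f$ level by level rather than through first-difference parity. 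Getting this constraint to survive the passage to the Knaster extension is the crux, and the restriction to pairwise compatibility is the reason the method yields only $\MA({\rm Knaster})$ rather than full $\MA$.
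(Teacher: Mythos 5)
Your overall strategy (fix a ground-model pair $A,B$ with a Lipschitz-avoiding property, iterate with bookkeeping, prove a preservation lemma) is the paper's strategy, but there is a genuine error at the load-bearing step: the finite-condition specializing forcing $\P_T$ is \emph{not} Knaster, and your whole reduction rests on the claim that it is. Under $\diamondsuit$ --- your own choice of ground model --- there are Suslin trees $T$, and for such $T$ the product $\P_T \times T$ is not ccc (forcing with $T$ adds a cofinal branch, after which $\P_T$ fails the ccc), whereas a Knaster poset times a ccc poset is ccc; so $\P_T$ cannot be Knaster. (This is also why the theorem is interesting at all: if specializing posets were Knaster, $\MA({\rm Knaster})$ would already specialize Aronszajn trees of size $\aleph_1$.) Consequently your iteration is not an iteration of Knaster posets, $\P_{\omega_2}$ is not Knaster, and you cannot apply your single preservation lemma to the whole iteration at once. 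What is needed instead, and what the paper supplies, is (i) a separate preservation argument for $\P_T$ --- Lemma \ref{PT}, which uses the theorem that every real added by $\P_T$ is added by a $\sigma$-centered (hence Knaster) subforcing (Theorem \ref{theoremspecial}), together with the lifting Lemma \ref{lifting lipschitz} to code the hypothetical Lipschitz map by a real, plus absoluteness; and (ii) a limit-preservation lemma for finite support iterations of ccc posets each of which individually preserves avoiding (Lemma \ref{iteration}, with separate arguments at countable and uncountable cofinalities). Without these two ingredients the proof does not go through.

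A second, smaller point: your preservation lemma for a single Knaster poset is left hanging on an unconstructed ``avoiding coloring'' with properties (i)--(iii), which you yourself identify as the crux. No such structure is needed. Take the plain notion the paper uses: $A$ Lipschitz avoids $B$ iff no uncountable subset of $A$ is injected into $B$ by a Lipschitz map (easy to arrange for $\aleph_1$-dense sets under $\CH$ by diagonalizing against the $\aleph_1$ many Lipschitz functions on closed subsets of $\cantor$). Since $B$ is a ground-model set, a name $\dot f$ for a Lipschitz injection of an uncountable $\dot Z \subseteq \check A$ into $\check B$ lets you choose, for each $\alpha<\omega_1$, a condition $p_\alpha$ deciding \emph{entire values}: $p_\alpha \forces \dot z_\alpha = \check a_\alpha \land \dot f(\check a_\alpha) = \check b_\alpha$ with $a_\alpha \in A$, $b_\alpha \in B$. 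Pairwise compatibility of an uncountable subfamily then yields, already in $V$, that $a_\alpha \mapsto b_\alpha$ is injective and Lipschitz, because both conditions are \emph{pairwise} and absolute statements about ground-model reals (any common extension of $p_\alpha, p_\beta$ forces $d(b_\alpha,b_\beta) \le d(a_\alpha,a_\beta)$ and $b_\alpha \ne b_\beta$). This is exactly the paper's Lemma \ref{knaster}; your instinct that Knaster's pairwise compatibility is precisely what the Lipschitz condition needs is correct, but it works directly, with no walk-style coloring and no finite approximations $t_a$.
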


The rest of this section is devoted to the proof of this theorem. As a first step we recall some definitions.
\begin{definition}
    A partial order $\P$ is {\em Knaster} if for every uncountable $A \subseteq \P$ there is an uncountable $B \subseteq A$ of pairwise compatible elements.
\end{definition}

Let $T$ be a tree of height $\omega_1$ with no cofinal branch. Recall that $T$ is called {\em special} if there is a function $f:T \to \omega$ which is injective on chains.
\begin{definition}
    Let $T$ be a tree of height $\omega_1$. The {\em specializing forcing} for $T$, denoted $\P_T$ is the forcing notion consisting of finite partial functions $p:T \to \omega$ which is injective on chains.
\end{definition}
This forcing was first introduced in \cite{BMR70} to show that $\MA$ implies that every tree of height $\omega_1$ with no cofinal branch and cardinality less than $\cc$ is special. The key fact is the following.
\begin{fact}
    $P_T$ is ccc if and only if $T$ has no cofinal branch.
\end{fact}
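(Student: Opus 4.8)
The plan is to prove both directions, the first being routine and the second being the substance (this is essentially the Baumgartner--Malitz--Reinhardt theorem). Throughout I use that a cofinal branch exists iff $T$ has an uncountable chain, since the downward closure of any uncountable chain meets cofinally many levels and is thus a branch of length $\omega_1$.

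For the forward direction I argue the contrapositive. Suppose $T$ has a cofinal branch $b = \{t_\alpha : \alpha < \omega_1\}$ with $t_\alpha$ on level $\alpha$, and consider the singleton conditions $p_\alpha = \{(t_\alpha,0)\} \in \P_T$. For $\alpha \neq \beta$ the nodes $t_\alpha, t_\beta$ are $<_T$-comparable (both lie on the chain $b$) yet both are assigned the value $0$, so $p_\alpha \cup p_\beta$ is not injective on chains; hence $p_\alpha \perp p_\beta$. Thus $\{p_\alpha : \alpha < \omega_1\}$ is an uncountable antichain and $\P_T$ is not ccc.

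For the converse, assume $T$ has no uncountable chain and let $\{p_\alpha : \alpha < \omega_1\} \subseteq \P_T$; the goal is to produce two compatible members. First I would normalize: by the $\Delta$-system lemma pass to an uncountable subfamily whose domains form a $\Delta$-system with finite root $r$, then thin (there are only countably many maps $r \to \omega$) so that all conditions agree on $r$, and finally pigeonhole on the order- and value-pattern of the finitely many private nodes so that the private parts $P_\alpha = \mathrm{dom}(p_\alpha) \setminus r$ all realize one fixed ``type''. Since the conditions now agree on $r$, a short check shows any incompatibility must be a \emph{private--private} conflict: a comparable pair $s \in P_\alpha$, $t \in P_\beta$ receiving the same value (a conflict involving a root node would already exhibit that one $p_\alpha$ fails to be injective on chains). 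Consequently it suffices to find $\alpha \neq \beta$ with no node of $P_\alpha$ being $<_T$-comparable to any node of $P_\beta$.

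The heart of the matter is thus the combinatorial claim: in a tree with no uncountable chain, any uncountable family of finite node-sets $\{P_\alpha\}$ has two cross-incomparable members. When the $P_\alpha$ are singletons this is immediate, as pairwise comparability would give an uncountable chain. The crucial subtlety, which I expect to be the main obstacle, is that $T$ may \emph{also} have no uncountable antichain (for instance $T$ could be Suslin), so one cannot extract a homogeneous uncountable subfamily and iterate; the argument must produce a single good pair directly. The clean route is a reflection argument: fix a countable $M \prec H_\theta$ with $T$ and $\langle p_\alpha : \alpha < \omega_1\rangle$ in $M$, put $\delta = M \cap \omega_1$, and pick some $\beta \geq \delta$. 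For each of the finitely many private nodes $t$ of $p_\beta$, any ``old'' node (one lying in $M$, hence eligible as a private node of some $p_\alpha$ with $\alpha < \delta$) that is comparable to $t$ must lie on $t$'s predecessor chain or in its successor cone; for the nodes $t$ of level $\geq \delta$ the successor cone meets $M$ trivially, and the predecessor chains contribute only countably much obstruction. The delicate point is to show, using the no-uncountable-chain hypothesis together with a pressing-down argument inside $M$, that these finitely many predecessor chains and low cones cannot block \emph{every} $p_\alpha$ with $\alpha < \delta$; granting this, some such $p_\alpha \in M$ has private part disjoint from all obstructing cones and is therefore compatible with $p_\beta$, contradicting that the family is an antichain and establishing that $\P_T$ is ccc.
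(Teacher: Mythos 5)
Your forward direction is correct and complete, and your reduction of the converse is also sound: the $\Delta$-system argument, agreement on the root, and the observation that any incompatibility must come from a private--private comparable pair with equal values (root conflicts being impossible since each $p_\beta$ is itself injective on chains) are all standard and correctly executed. You have also correctly identified that everything comes down to the partition lemma: in a tree with no uncountable chain, any uncountable family of pairwise disjoint finite sets contains two cross-incomparable members. For the record, the paper does not prove this Fact at all --- it quotes it from Baumgartner--Malitz--Reinhardt \cite{BMR70} --- so that lemma is the entire mathematical content here.

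And that is exactly where your proof has a genuine gap, which you half-concede with the phrase ``granting this.'' The reflection sketch you offer cannot be repaired by the reasons you give. First, a minor point: choosing $\beta \geq \delta$ does not prevent private nodes of $p_\beta$ from lying in $M$ (they may sit at low levels); you should instead choose $\beta$ so that $\mathrm{dom}(p_\beta) \cap M = \emptyset$, which the pairwise disjointness of the petals permits, and then no node of $M$ lies \emph{above} any private node of $p_\beta$, so the obstruction really is just the finitely many predecessor chains. But then comes the fatal point: the fact that this obstruction set is countable buys you nothing, because $M$ contains only the countably many conditions $p_\alpha$ with $\alpha < \delta$. A countable union of chains can perfectly well meet the private part of \emph{every} $p_\alpha$, $\alpha < \delta$ --- nothing you have said excludes, say, every such $p_\alpha$ containing a node below $t_1$ --- so no pressing-down ``inside $M$'' or cardinality count can produce the desired $p_\alpha$. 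This is precisely the Suslin-type difficulty you yourself flagged, and it does not go away after reflecting into one countable model: the published proofs must instead extract an uncountable chain in $T$ from the assumption that \emph{all} pairs of the full $\omega_1$-family are cross-comparable, via pressing-down/pigeonhole arguments on the whole family (fixing divergence levels and traces of the finitely many predecessor branches of each condition) rather than by counting inside a single model. As it stands, the implication ``no cofinal branch $\Rightarrow$ ccc,'' which is the substance of the Fact, is asserted rather than proved.
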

The specializing forcing notions $\P_T$ were studied in more detail in \cite{specialtree}. There it is shown, among other things the following.
\begin{theorem}[Theorem 3.6 of \cite{specialtree}]
    If $T$ is a tree of height $\omega_1$ with no cofinal branch and $\dot{x}$ is a $\P_T$ name for a real then there is a $\sigma$-centered subforcing $\Q_{\dot{x}}$ so that $\dot{x}$ is forced to be in $V^{\Q_{\dot{x}}}$. \label{theoremspecial}
\end{theorem}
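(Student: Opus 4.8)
The plan is to peel off from $\dot x$ a countable, and hence automatically $\sigma$-centered, amount of $\P_T$, to show that this piece already names $\dot x$, and then to fight for the one genuinely nontrivial point: that the piece sits \emph{regularly} inside $\P_T$. Without loss of generality I take $\dot x$ to be a $\P_T$-name for an element of $\baire$. Since $T$ has no cofinal branch, the cited fact gives that $\P_T$ is ccc, so for every $n<\omega$ I can fix a \emph{countable} maximal antichain $A_n\subseteq\P_T$ whose elements each decide $\dot x(n)$. Fix a countable $M\prec H_\theta$ (for large regular $\theta$) with $T,\P_T,\dot x,\langle A_n\mid n<\omega\rangle\in M$ and put $T_0=T\cap M$; by elementarity $T_0$ is a downward closed countable subtree, and since each $A_n\in M$ is countable we have $A_n\subseteq M$, so every condition occurring in some $A_n$ is supported on $T_0$. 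Consider
\[
\Q_{\dot x}=\{p\in\P_T:\mathrm{dom}(p)\subseteq T_0\}=\P_T\cap M .
\]

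Because a condition is a finite partial function and both $T_0$ and $\omega$ are countable, $\Q_{\dot x}$ is a countable poset, and every countable poset is $\sigma$-centered (partition it into singletons); this settles the $\sigma$-centeredness requirement at once. Moreover compatibility is absolute between $\Q_{\dot x}$ and $\P_T$: two conditions supported on $T_0$ are compatible exactly when their union is again a condition, and that union is still supported on $T_0$. Consequently each $A_n$, being contained in $\Q_{\dot x}$ and maximal in $\P_T$, is also a maximal antichain of $\Q_{\dot x}$. Reading off, for each $n$, the value forced by the unique member of $A_n$ that enters the generic therefore produces a $\Q_{\dot x}$-name $\dot y$ for a real with $\P_T\Vdash\dot x=\dot y$; equivalently, for $\P_T$-generic $G$ the unique $a_n\in A_n\cap G$ lies in $\Q_{\dot x}$, so $G\cap\Q_{\dot x}$ meets every $A_n$ and $\dot x^G$ is computed from $G\cap\Q_{\dot x}$ alone.

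The step I expect to be the real obstacle is upgrading this to the honest assertion $\dot x\in V^{\Q_{\dot x}}$, i.e.\ producing a \emph{regular} $\sigma$-centered subforcing so that $G\cap\Q_{\dot x}$ is genuinely $\Q_{\dot x}$-generic over $V$. The naive candidate $\P_T\cap M$ is in general not regular: a colour pinned by a condition at a node $v$ lying above $T_0$ is thereby forbidden at all $T_0$-nodes below $v$, and this mismatch can be leveraged to produce maximal antichains of $\P_T\cap M$ that fail to be predense in $\P_T$. So $\P_T\cap M$ is $\sigma$-centered but need not be regular, and the construction must be refined.

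To overcome this I would pursue one of two routes. The first is to enlarge the conditions of $\Q_{\dot x}$ so that they may also commit, below suitable master conditions, to finitely many colour-avoidance requirements on the branches leaving $T_0$ — requirements realizable inside $\P_T$ precisely because colours range over the infinite set $\omega$ and $T$ has no cofinal branch — arranging that the resulting poset stays countable (hence $\sigma$-centered) while every one of its maximal antichains becomes predense in $\P_T$. The second, more abstract, route is to pass directly to the complete subalgebra $B_0\le\mathrm{RO}(\P_T)$ generated by the Boolean values $b_{n,k}$ of the statements $\dot x(n)=k$ — equivalently the quotient $\mathrm{Borel}(\baire)/I_{\dot x}$ for the ideal $I_{\dot x}=\{B:\P_T\Vdash\dot x\notin B\}$ — and to show this ccc, countably generated algebra is forcing equivalent to a $\sigma$-centered poset, reflecting the fact that $\P_T$ adds only Cohen-type reals and, in particular, no random real. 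Either way, once the regular $\sigma$-centered subforcing is secured the name $\dot y$ witnesses $\dot x\in V^{\Q_{\dot x}}$; the bookkeeping of the first two paragraphs is routine by comparison, and it is exactly the regularity of the $\sigma$-centered capture that carries the weight of the theorem.
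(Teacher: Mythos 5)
First, a point of orientation: the paper itself gives no proof of this statement --- it is imported verbatim as Theorem 3.6 of \cite{specialtree} --- so your proposal can only be judged on its own terms, and on its own terms it is incomplete. Your first two paragraphs are correct but routine: since $\P_T$ is ccc you may fix countable maximal antichains $A_n$ deciding $\dot x(n)$; for countable $M \prec H_\theta$ containing everything relevant each $A_n \subseteq M$, the poset $\Q_{\dot x} = \P_T \cap M$ is countable (hence trivially $\sigma$-centered), compatibility is absolute between $\Q_{\dot x}$ and $\P_T$, and each $A_n$ stays maximal in $\Q_{\dot x}$. Your diagnosis of the obstacle is also correct and can be made concrete: if $T$ has countable levels and $\delta = M \cap \omega_1$, then $T \cap M$ is exactly the set of nodes of level $< \delta$, and any node $v$ of level $\delta$ gives a condition $p = \{(v,0)\}$ with no reduction to $\Q_{\dot x}$ --- any $q \in \Q_{\dot x}$ compatible with $p$ uses colour $0$ on no predecessor of $v$, so placing colour $0$ on a predecessor of $v$ whose level exceeds every level in ${\rm dom}(q)$ produces an extension of $q$ inside $\Q_{\dot x}$ that is incompatible with $p$. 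Hence maximal antichains of $\Q_{\dot x}$ need not be predense in $\P_T$, the filter $G \cap \Q_{\dot x}$ need not be $\Q_{\dot x}$-generic over $V$, and the observation that $\dot x^G$ is ``computed from'' $G \cap \Q_{\dot x}$ does not place $\dot x^G$ in a generic extension of $V$ by a $\sigma$-centered forcing, which is what the theorem asserts.

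The genuine gap is that neither of your two proposed repairs is carried out, and neither can be accepted as stated. Route (a) --- enlarging conditions by finitely many ``colour-avoidance'' commitments --- gives no definition of the poset, no argument that it remains countable (for wide trees there may be uncountably many branches of $T \cap M$ realized by nodes of $T$), and, above all, no proof of the claim that ``every one of its maximal antichains becomes predense in $\P_T$''; that claim \emph{is} the theorem. Note also the tension you pass over: your $\sigma$-centeredness came for free from countability, so any enlargement made to gain regularity must be shown not to destroy it. Route (b) is circular: passing to the complete subalgebra $B_0$ of ${\rm RO}(\P_T)$ generated by the Boolean values of $\dot x$ does give regularity and $\dot x \in V^{B_0}$ for free, but then the entire content of the theorem becomes the assertion that $B_0$ is $\sigma$-centered, which you justify by ``the fact that $\P_T$ adds only Cohen-type reals and no random real.'' A countably generated ccc complete algebra need not be $\sigma$-centered (the random algebra is the canonical counterexample), and the fact that $\P_T$ adds no random real is a \emph{corollary} of this very theorem --- indeed the reason it was proved in \cite{specialtree} --- not an ingredient available in advance. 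So the argument stops exactly where, by your own account, the weight of the theorem lies.
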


The iteration used to produce Theorem \ref{noimplicationofMA} will be the obvious one - a finite support iteration of Knaster forcing notions and forcing notions of the form $\P_T$ using standard bookkeeping devices as found in e.g. \cite[Chapter 5]{KenST}. The point is that we will start in the ground model with a pair of $\aleph_1$-dense subsets of $\cantor$ which are counterexamples to $\BA_{\rm Lip}(\cantor)$ and will remain so by an iteration theorem we prove below. This pair of $\aleph_1$-dense sets are given by the following combinatorial property.

\begin{definition}
    Let $A, B \subseteq \baire$. We say that $A$ {\em Lipschitz avoids} $B$ if there is no uncountable $Z \subseteq A$ which can be injected into $B$ by a Lipschitz function.
\end{definition}

This definition is similar to the more general notion of avoiding introduced in \cite{weakBA}. Clearly if $A$ and $B$ are $\aleph_1$-dense subsets of $\cantor$ and $A$ Lipschitz avoids $B$ then in particular $A$ and $B$ witness the failure of $\BA_{\rm Lip} (\cantor)$. It is easy to build a pair of $\aleph_1$-dense $A, B \subseteq \cantor$ under $\CH$ so that $A$ Lipschitz avoids $B$. Similarly the following was also shown\footnote{Actually a stronger fact was proved but since we will not preserve the stronger property we do not bother to mention it.} in \cite{weakBA}.
\begin{fact}
    If $\kappa$ is an uncountable cardinal and $\P$ is the standard forcing to add either $\kappa$ many random reals or $\kappa$ many Cohens then any partition of the generic reals into two pieces $A$ and $B$ will Lipschitz avoid one another. 
\end{fact}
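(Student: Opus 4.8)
The plan is to argue directly in the generic extension $V[G]$ by contradiction, exploiting that a Lipschitz function is coded by a single real and hence lives in a subextension obtained from only countably many of the generic coordinates. Enumerate the generic reals as $\{c_\alpha \mid \alpha < \kappa\}$ and fix a partition $A = \{c_\alpha \mid \alpha \in I_A\}$, $B = \{c_\alpha \mid \alpha \in I_B\}$ with $I_A, I_B$ disjoint and covering $\kappa$. Suppose toward a contradiction that there are an uncountable $Z = \{c_\alpha \mid \alpha \in J\}$ with $J \subseteq I_A$ and a Lipschitz function injecting $Z$ into $B$; by Lemmas \ref{homomorphism representation} and \ref{lifting lipschitz} we may assume it is given by a Lipschitz $f \colon \cantor \to \cantor$ coded by a single real, and we write $f(c_\alpha) = c_{\beta(\alpha)}$ with $\beta(\alpha) \in I_B$ for $\alpha \in J$.

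First I would locate $f$ in a small subextension. Both the finite support product of $\kappa$ copies of Cohen forcing and the measure algebra of $2^\kappa$ are ccc, and every Borel subset of $2^\kappa$ depends on only countably many coordinates; consequently every real of $V[G]$, in particular the code of $f$, lies in $V[G_S]$ for some countable $S \subseteq \kappa$, where $G_S$ denotes the part of $G$ living on the coordinates in $S$. Fix such an $S$. Since $\alpha \mapsto c_\alpha$ is injective (distinct generic coordinates yield distinct, mutually generic reals) and $f \hook Z$ is injective, the map $\beta \colon J \to I_B$ is injective, so $\{\beta(\alpha) \mid \alpha \in J\}$ is uncountable. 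Discarding the countably many $\alpha \in J$ with $\alpha \in S$ and the countably many with $\beta(\alpha) \in S$ still leaves uncountably many $\alpha \in J$ for which $\alpha \notin S$ and $\beta := \beta(\alpha) \notin S$; fix one such $\alpha$, noting $\alpha \neq \beta$ since $\alpha \in I_A$ and $\beta \in I_B$.

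The contradiction is then immediate from mutual genericity. On one hand $f$ is continuous and coded in $V[G_S]$, while $c_\alpha \in V[G_S][c_\alpha] = V[G_{S \cup \{\alpha\}}]$, so $c_\beta = f(c_\alpha) \in V[G_S][c_\alpha]$. On the other hand $\beta \notin S \cup \{\alpha\}$, so $c_\beta$ is Cohen (respectively random) over $V[G_{S \cup \{\alpha\}}]$ and hence, being generic, does not lie in that model; thus $c_\beta \notin V[G_S][c_\alpha]$, a contradiction. Therefore no such $Z$ and $f$ exist and $A$ Lipschitz avoids $B$. The hard part is really the two standard forcing inputs---that names for reals in these posets have countable support and that distinct coordinates remain mutually generic---which are transparent for the finite support Cohen product and rest on the countable coordinate dependence of Borel sets of $2^\kappa$ in the random case. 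I note finally that the argument uses only the continuity of $f$ (that it is coded by a real), so it in fact yields the stronger avoidance property referred to in the footnote, the Lipschitz hypothesis being inessential here.
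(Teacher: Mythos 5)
Your proof is correct. Note, however, that the paper itself gives no proof of this fact: it is imported by citation from \cite{weakBA}, so there is no internal argument to compare against. Your argument is the natural self-contained one, and it is sound: a Lipschitz (indeed, any continuous) injection witnessing a failure of avoidance lifts, by Lemmas \ref{homomorphism representation} and \ref{lifting lipschitz}, to a function with closed domain coded by a single real; by ccc-ness plus the countable-support property of conditions (finite supports for the Cohen product, countable coordinate-dependence of Borel sets for the measure algebra of $2^\kappa$) that real lies in $V[G_S]$ for a countable $S\subseteq\kappa$; injectivity of $f\hook Z$ then yields uncountably many $\alpha\in J$ with $\alpha\notin S$ and $\beta(\alpha)\notin S$, and mutual genericity (via the factoring of the product, respectively of the measure algebra as an iteration over the coordinates in $S\cup\{\alpha\}$) gives $c_{\beta(\alpha)}=f(c_\alpha)\in V[G_{S\cup\{\alpha\}}]$ while $c_{\beta(\alpha)}\notin V[G_{S\cup\{\alpha\}}]$, a contradiction. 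Two small remarks. First, Lemma \ref{lifting lipschitz} only extends $f$ to the closure $\bar{Z}$, not to all of $\cantor$; this is harmless since your argument uses only that $f$ and its closed domain are coded by a real (one can also extend a tree homomorphism from a subtree of $2^{<\omega}$ to all of $2^{<\omega}$, but it is unnecessary). Second, your closing observation that the argument uses only continuity, and hence proves the stronger, non-Lipschitz avoidance property, is exactly the ``stronger fact'' alluded to in the paper's footnote, and your proof also makes visible why injectivity of $f\hook Z$ (or at least uncountability of the image) is genuinely needed to select a coordinate $\beta(\alpha)$ outside $S$.
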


The justification for this definition is that it plays well with iterations. The following is a variation of the iteration theorem given in \cite[Theorem 5.9]{weakBA}.
\begin{lemma}
    Let $\delta$ be an ordinal, $A, B \subseteq \baire$ and assume $A$ Lipschitz avoids $B$. If $\langle \P_\alpha , \dot{\Q}_\alpha\; | \; \alpha \in \delta\rangle$ is a finite support iteration of ccc forcing notions so that for each $\alpha < \delta$ we have that $\forces_\alpha$``$\dot{\Q}_\alpha$ preserves that $\check{A}$ Lipschitz avoids $\check{B}$", then $\forces_\delta$ ``$\check{A}$ Lipschitz avoids $\check{B}$". \label{iteration}
\end{lemma}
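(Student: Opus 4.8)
The plan is to first replace the defining property of Lipschitz avoiding by a cleaner equivalent. I claim that $A$ Lipschitz avoids $B$ if and only if for every Lipschitz $f:\baire \to \baire$ the set $f``A \cap B$ is countable. Indeed, if $Z \subseteq A$ is uncountable, $f$ is Lipschitz and injective on $Z$, and $f``Z \subseteq B$, then $f``Z \subseteq f``A \cap B$ and $|f``Z| = |Z| = \aleph_1$; conversely, if $f``A \cap B$ is uncountable, choosing one $A$-preimage of each of its points yields an uncountable $Z \subseteq A$ on which $f$ is an injection into $B$. This reformulation is convenient because it turns preservation of avoiding into preservation of the \emph{countability} of the ground model sets $f``A \cap B$, and because a Lipschitz $f$ is coded by the single real $\tilde f$ of Lemma \ref{homomorphism representation}. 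I would then prove, by induction on $\beta \leq \delta$, that $\P_\beta$ preserves ``$A$ Lipschitz avoids $B$''. Since a finite support iteration of ccc forcings is ccc, $\omega_1$ is preserved at every stage, so uncountability is absolute between the models $V^{\P_\beta}$ and this is a meaningful induction.

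The base case is trivial and the successor case $\beta = \gamma + 1$ is immediate: by the induction hypothesis $A$ Lipschitz avoids $B$ in $V^{\P_\gamma}$, and since $\P_{\gamma+1} \cong \P_\gamma \ast \dot{\Q}_\gamma$ with $\forces_\gamma$``$\dot\Q_\gamma$ preserves avoiding'', avoiding persists to $V^{\P_{\gamma+1}}$. A limit $\beta$ of uncountable cofinality is also clean. Suppose some Lipschitz $f$ in $V^{\P_\beta}$ had $f``A \cap B$ uncountable. As $\P_\beta$ is a finite support ccc iteration of cofinality greater than $\omega$, every real of $V^{\P_\beta}$, in particular the code $\tilde f$, already lies in some $V^{\P_\alpha}$ with $\alpha < \beta$. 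Then $f``A \cap B$ is computable from $\tilde f$ and the ground model sets $A, B$, so it belongs to $V^{\P_\alpha}$; being uncountable in $V^{\P_\beta}$ it is uncountable in the inner model $V^{\P_\alpha}$, contradicting the induction hypothesis that $A$ Lipschitz avoids $B$ there.

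The remaining and genuinely delicate case is a limit $\beta$ of cofinality $\omega$, where $\tilde f$ may be a new real whose support is cofinal in $\beta$, so no reflection of the function to a bounded stage is available. Here I would work below a condition $p$ forcing a Lipschitz $\dot f$ with $\dot f``A \cap B$ uncountable, and extract in $V$ a family $\{(a_\xi, b_\xi, p_\xi) : \xi < \omega_1\}$ with $a_\xi \in A$, $b_\xi \in B$, $p_\xi \leq p$, and $p_\xi \forces \dot f(a_\xi) = b_\xi$; by refining with the ccc (a single real $\dot f(a)$ cannot be forced equal to uncountably many distinct values by conditions that are then pairwise incompatible) one may assume the $a_\xi$, and hence the $b_\xi$, are pairwise distinct. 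Since $\dot f$ is forced Lipschitz, each $p_\xi$ in fact decides $\tilde f(a_\xi \hook k) = b_\xi \hook k$ for every $k$, so any common lower bound of $p_\xi$ and $p_\eta$ forces that $a_\xi \hook k = a_\eta \hook k$ implies $b_\xi \hook k = b_\eta \hook k$. Consequently, on any pairwise compatible subfamily the ground model map $a_\xi \mapsto b_\xi$ is a Lipschitz injection of an uncountable subset of $A$ into $B$, which would contradict that $A$ Lipschitz avoids $B$ in $V$.

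The crux, and what I expect to be the main obstacle, is precisely the extraction of an uncountable \emph{coherent} subfamily: the countable chain condition by itself yields no uncountable linked set, and the relation ``incoherent'' is contained in, but may be strictly smaller than, ``incompatible'', so one cannot merely quote ccc. The structure of Lipschitz maps must be exploited through the homomorphism representation of Lemma \ref{homomorphism representation}. For each node $s \in \omega^{<\omega}$ the values $b_\xi \hook |s|$ taken by branches passing through $s$ are, by ccc, decided along a countable antichain, and I would run a fusion along the tree of ``fat'' nodes, namely those pairs $(s, w)$ for which uncountably many $\xi$ satisfy $a_\xi \hook |s| = s$ and $b_\xi \hook |s| = w$, so as to single out a coherent value at each node. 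A pigeonhole on the finite supports of the $p_\xi$ against a cofinal $\omega$-sequence in $\beta$ places uncountably many conditions in a single bounded $\P_\alpha$, and it is the interplay of this finite support, the ccc, and the tree structure that should force the fusion to retain uncountably many $\xi$ even when a node has several fat successors. Verifying that the surviving family stays uncountable is the technical heart of the argument and the step I expect to require the most care.
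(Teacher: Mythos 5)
Your successor step is fine, and your uncountable-cofinality step is correct and arguably cleaner than the paper's own: by recasting avoidance in terms of total Lipschitz maps $f:\baire\to\baire$, the counterexample is coded by the single real $\tilde f$ of Lemma \ref{homomorphism representation}, which by ccc plus finite support reflects to a stage $\alpha<\beta$; there $f``A\cap B$ is literally the same (uncountable) set, contradicting the induction hypothesis, with no need for the paper's detour through $\bar{Z}$ and Shoenfield absoluteness. One debt you should still pay: your opening equivalence needs the fact that a Lipschitz injection defined only on $Z\subseteq A$ extends to a \emph{total} Lipschitz map; Lemma \ref{lifting lipschitz} only gets you to the closure $\bar{Z}=[T]$, and you must additionally extend the induced homomorphism from $T$ to all of $\omega^{<\omega}$ (easy, but not in the paper, and not stated by you).

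The genuine gap is exactly where you locate it: cofinality $\omega$. Your plan is to extract in $V$ an uncountable family $p_\xi\forces\dot f(\check{a}_\xi)=\check{b}_\xi$ and then find an uncountable pairwise compatible subfamily, so that $a_\xi\mapsto b_\xi$ contradicts avoidance \emph{in $V$}. This cannot work as stated: ccc yields no uncountable linked subfamily (that is precisely the Knaster property, which finite support iterations of ccc forcings need not have, and which the paper deliberately reserves for Lemma \ref{knaster}), and your proposed fusion along ``fat nodes'' is not carried out and does not obviously circumvent this. The missing idea --- which makes this case easy rather than the hardest --- is to land the contradiction in an \emph{intermediate} extension, where the induction hypothesis gives you avoidance. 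Since $A$ and $B$ are ground model sets, every instance $\dot f(\check{a})=\check{b}$ with $a\in A$, $b\in B$ is decided by a condition of finite, hence bounded, support; so fixing $\delta_n\nearrow\beta$ and $G$ generic, the restriction of $\dot f^{G}$ to $\{a\in A: \dot f^{G}(a)\in B\}$ equals $\bigcup_{n<\omega}f_n$, where $f_n=\{(a,b)\in A\times B : \exists p\in G\cap\P_{\delta_n}\ p\forces\dot f(\check{a})=\check{b}\}$ lies in $V[G\cap\P_{\delta_n}]$. By pigeonhole some $f_n$ has uncountable range; being Lipschitz is absolute, so (choosing one preimage per point of the range) $f_n$ violates avoidance in $V[G\cap\P_{\delta_n}]$, contradicting the induction hypothesis. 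This is the paper's argument. Alternatively, your family can be salvaged in the same spirit: pigeonhole the finite supports to put uncountably many $p_\xi$ into a single $\P_{\delta_n}$; by ccc some condition forces $\{\xi : p_\xi\in\dot G\}$ to be uncountable, and in such an extension the $p_\xi$ lying in the filter are pairwise compatible, so your coherence observation applies and $a_\xi\mapsto b_\xi$ contradicts avoidance in $V[G\cap\P_{\delta_n}]$ --- again in the intermediate model, never in $V$.
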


\begin{proof}
    Fix $A, B \subseteq \omega^\omega$ so that $A$ Lipschitz avoids $B$ and let $\langle \P_\alpha , \dot{\Q}_\alpha\; | \; \alpha \in \delta\rangle$ is a finite support iteration of ccc forcing notions so that for each $\alpha < \delta$ we have that $\forces_\alpha$``$\dot{\Q}_\alpha$ preserves that $\check{A}$ Lipschitz avoids $\check{B}$". We need to show that $\P_\delta$ forces that $A$ Lipschitz avoids $B$. The proof is by induction on $\delta$. The successor case is by assumption so we check the limit case. Assume towards a contradiction that $\forces_\delta$``$\dot{Z} \subseteq \check{A}$ is uncountable and $\dot{f}:\dot{Z} \to \check{B}$ is Lipschitz with uncountable range. There are two subcases. 

    \noindent \underline{Case 1}: $\delta$ has countable cofinality. Let $\delta_n \nearrow \delta$ be a strictly increasing sequence of ordinals with $n \in \omega$. Let $G_\delta$ be $\P_\delta$-generic and for each $n < \omega$ let $G_n = G_\delta \cap \P_{\delta_n}$ be the $\P_{\delta_n}$-generic given by the subforcing. Work momentarily in $V[G_\delta]$. Let $f_n$ be the partial function from $X$ to $Y$ given by $f_n(x) = y$ if and only if there is a condition $p \in G_n$ forcing that $\dot{f}(\check{x}) = \check{y}$. Note that by the finite support we have that $$\forces_{\delta} \dot{f}^{G_\delta} =\bigcup_{n < \omega} f_n$$. Moreover observe that $f_n \in V[G_n]$, where, by inductive assumption, $A$ still Lipschitz avoids $B$. By pigeonhole is an $n<\omega$ so that $f_n$ has domain uncountable. But being Lipschitz is absolute so this $f_n$ is already a counterexample to avoiding, which is a contradiction. 

    \noindent \underline{Case 2}: $\delta$ has uncountable cofinality. In $V[G_\delta]$ by Lemma \ref{lifting lipschitz} we can lift $\dot{f}^{G_\delta}$ to a function $\hat{f}$ whose domain is $\bar{Z}$. Since $\bar{Z}$ is closed, both $\hat{f}$ and $\bar{Z}$ are coded by reals hence by the ccc plus finite support there is a $\gamma < \delta$ so that $\bar{Z}, \hat{f} \in V[G_\gamma]$. Work in $V[G_\gamma]$ for such a $\gamma$ and note by assumption that $A$ still Lipschitz avoids $B$ in $V[G_\gamma]$. Observe first that if there are $x \in A$ and $y \in B$ so that some condition $p \in \P_{\gamma, \delta}$ forces that $\dot{f}(\check{x}) = \check{y}$ then by Schoenfield absoluteness in $V[G_\gamma]$ we already have that $\hat{f}(x) = y$. Moreover in $V[G_\delta]$ we have that $\hat{f}$ is Lipschitz - but this is also absolute since to be Lipschitz is a $\Pi^1_2$ statement with the parameters $\hat{f}$ and $\bar{Z}$: $\forall x, y \in \bar{Z} \, \forall k \in \omega\; ( x \hook k = y \hook k \to \hat{f}(x) \hook k = \hat{f} (x) \hook k)$. Therefore in $V[G_\gamma]$ we have that $\hat{f}$ is Lipschitz and its domain includes uncountably many elements of $A$ which are getting mapped into $B$. Therefore $\hat{f}\hook (A \cap \hat{f}^{-1}(B))$ is a Lipschitz map with uncountable range from a subset of $A$ into $B$, contradicting avoiding. 
\end{proof}

\begin{lemma}\label{knaster}
    Suppose $\P$ is Knaster and $A, B \subseteq 2^\omega$ is so that $A$ Lipschitz avoids $B$ then $\P$ forces that $A$ Lipschitz avoids $B$. 
\end{lemma}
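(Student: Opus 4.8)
The plan is to argue by contradiction, extracting from a hypothetical forced counterexample an \emph{actual} counterexample living in the ground model $V$, and to use the Knaster property precisely to guarantee that the extracted object really is a Lipschitz injection in $V$. So suppose towards a contradiction that some $p \in \P$ forces that $\dot Z \subseteq \check A$ is uncountable and that $\dot f : \dot Z \to \check B$ is an injective Lipschitz function.

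First I would manufacture, inside $V$, an $\omega_1$-indexed family of conditions tagged with domain points. A routine density argument shows the set $D = \{a \in A \mid \exists q \leq p,\ q \Vdash \check a \in \dot Z\}$ is uncountable: otherwise every $a \in A \setminus D$ satisfies $p \Vdash \check a \notin \dot Z$, so $p$ would force $\dot Z \subseteq \check D$ and hence that $\dot Z$ is countable, contradicting our assumption. Enumerate $D = \{a_\alpha \mid \alpha < \omega_1\}$ with the $a_\alpha$ distinct and fix $p_\alpha \leq p$ with $p_\alpha \Vdash \check a_\alpha \in \dot Z$.

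The key step, and the one I expect to be the main (if modest) obstacle, is to pin the \emph{images} down to honest ground model reals. Since $p_\alpha \Vdash \dot f(\check a_\alpha) \in \check B$ and $B \in V$, the value $\dot f(\check a_\alpha)$ is forced to equal some ground model real. For any generic $G \ni p_\alpha$ this value is a specific $b \in B \subseteq V$, and since $\dot f(\check a_\alpha) = \check b$ then holds in $V[G]$, the truth lemma supplies a condition in $G$ below $p_\alpha$ already forcing this equality. Thus for each $\alpha$ I may fix $q_\alpha \leq p_\alpha$ and $b_\alpha \in B$ with $q_\alpha \Vdash \check a_\alpha \in \dot Z \wedge \dot f(\check a_\alpha) = \check b_\alpha$. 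This is exactly where it matters that $\dot f$ is forced to land in the ground model set $B$: a genuinely generic value could never be decided by a single condition, whereas a value forced into $V$ always is.

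Finally I would invoke the Knaster property of $\P$. Applying it to the family $\{q_\alpha \mid \alpha < \omega_1\}$ yields an uncountable $I \subseteq \omega_1$ with $\{q_\alpha \mid \alpha \in I\}$ pairwise compatible, and I define $g : \{a_\alpha \mid \alpha \in I\} \to B$ by $g(a_\alpha) = b_\alpha$. Pairwise compatibility now does all the remaining work back in $V$: given $\alpha, \beta \in I$ and a common extension $r \leq q_\alpha, q_\beta$, the forced Lipschitz property of $\dot f$ yields $b_\alpha \hook k = b_\beta \hook k$ whenever $a_\alpha \hook k = a_\beta \hook k$, so $g$ is Lipschitz, while the forced injectivity of $\dot f$ together with $a_\alpha \neq a_\beta$ forces $b_\alpha \neq b_\beta$, so $g$ is injective. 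As every piece of this data ($I$, the $a_\alpha$, the $b_\alpha$) belongs to $V$, the map $g$ is, in the ground model, a Lipschitz injection of the uncountable set $\{a_\alpha \mid \alpha \in I\} \subseteq A$ into $B$ (extend it to all of $2^\omega$ via Lemma \ref{lifting lipschitz} if a total map is desired), contradicting that $A$ Lipschitz avoids $B$ in $V$.
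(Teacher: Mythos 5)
Your proof is correct and follows essentially the same route as the paper's: produce uncountably many conditions $q_\alpha$ each deciding a pair, $q_\alpha \Vdash \check a_\alpha \in \dot Z \wedge \dot f(\check a_\alpha) = \check b_\alpha$, apply the Knaster property to get pairwise compatible conditions, and use compatibility together with the forced Lipschitz property and injectivity of $\dot f$ to see that $a_\alpha \mapsto b_\alpha$ is a Lipschitz injection in $V$ of an uncountable subset of $A$ into $B$, contradicting avoidance. The only cosmetic difference is that the paper enumerates names $\dot z_\alpha$ for the elements of $\dot Z$ whereas you first extract the ground-model set $D$ of potential members, which has the small advantage of making the $a_\alpha$ distinct from the outset.
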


\begin{proof}
    Suppose towards a contradiction that $p \in \P$ forces that $\dot{Z} \subseteq A$ has size $\omega_1$ and there is a $\dot{f}:\dot{Z} \to B$ which is injective and Lipschitz. Concretely let $\{\dot{z}_\alpha \; | \; \alpha < \omega_1\}$ be names for the elements in some enumeration. For each $\alpha  < \omega_1$ let $p_\alpha \leq p$ be a condition so that there is some $a_\alpha \in A$ and $b_\alpha \in B$ with $$p_\alpha \forces \check{a}_\alpha = \dot{z}_\alpha \; \land \dot{f}(\check{a}_\alpha) = \check{b}_\alpha$$
By the Knaster property there is an uncountable $K \subseteq \omega_1$ so that the set of all $\{p_\alpha \; | \; \alpha \in K\}$ are pairwise compatible\footnote{This of course includes the case where the set of all $p_\alpha$ is actually countable i.e. some particular $q$ is equal to $p_\alpha$ for uncountably many $\alpha$.}. Let $g_K:\{a_\alpha \; | \; \alpha \in K\} \to \{b_\alpha\; | \; \alpha \in K\}$ be defined by $g_K (a_\alpha) = b_\alpha$ for all $\alpha \in K$. By compatibility of the conditions this is a Lipschitz injection from an uncountable subset of $A$ into $B$ contradicting avoiding.
\end{proof}

\begin{lemma}\label{PT}
    Suppose $T$ is a tree of height $\omega_1$ with no cofinal branch. If $A, B \subseteq \baire$ and $A$ Lipschitz avoids $B$ then $\P_T$ forces that $A$ Lipschitz avoids $B$.
\end{lemma}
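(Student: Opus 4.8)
The plan is to reduce the statement to Theorem \ref{theoremspecial}, the structure theorem for $\P_T$ from \cite{specialtree}, which says that any $\P_T$-name for a real is already captured by a $\sigma$-centered subforcing. First I would suppose towards a contradiction that some condition $p \in \P_T$ forces $\dot{Z} \subseteq \check{A}$ to be uncountable and $\dot{f}:\dot{Z} \to \check{B}$ to be a Lipschitz injection with uncountable range. By Lemma \ref{lifting lipschitz}, I may pass to the lift $\hat{f}$ of $\dot f$ to the closure of $\dot Z$; being Lipschitz, $\hat f$ is coded by a single real, and likewise the uncountable set of pairs $(a_\alpha, b_\alpha)$ witnessing values $\hat f(a_\alpha) = b_\alpha$ can each be pinned down by names. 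The object that must be destroyed is thus essentially a real (a code for the Lipschitz map together with countably much data per witness), so the hope is to confine the whole counterexample to a $\sigma$-centered, and in particular Knaster, subforcing and then invoke Lemma \ref{knaster}.

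Concretely, I would fix names $\dot a_\alpha, \dot b_\alpha$ and conditions $p_\alpha \le p$ for $\alpha < \omega_1$ forcing $\check a_\alpha \in \dot Z$ and $\dot f(\check a_\alpha) = \check b_\alpha$ with $a_\alpha \in A$, $b_\alpha \in B$, and I would apply Theorem \ref{theoremspecial} to a $\P_T$-name $\dot x$ coding the Lipschitz function $\hat f$ (or its homomorphism $\tilde f$ via Lemma \ref{homomorphism representation}). This yields a $\sigma$-centered subforcing $\Q_{\dot x} \subseteq \P_T$ which already forces $\dot x \in V^{\Q_{\dot x}}$. Since a $\sigma$-centered forcing is Knaster, and since the Lipschitz map $\hat f$ — and hence its restriction to the uncountably many $a_\alpha$ landing in $B$ — is captured inside $V^{\Q_{\dot x}}$, Lemma \ref{knaster} applied to $\Q_{\dot x}$ would give that $\check A$ Lipschitz avoids $\check B$ in $V^{\Q_{\dot x}}$, contradicting that $\hat f$ witnesses a failure of avoiding there.

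The main obstacle I expect is making the application of Theorem \ref{theoremspecial} genuinely capture the \emph{entire} counterexample, not merely the code $\dot x$ of the function. The subtlety is that Lipschitz avoiding is a statement about an uncountable family $\{a_\alpha\}$ of elements of $A$ together with their images, and \emph{a priori} the witnessing set $\dot Z$ and the pairing $a_\alpha \mapsto b_\alpha$ might involve generic information spread across all of $\P_T$ rather than the single subforcing $\Q_{\dot x}$. To handle this I would argue that once $\hat f$ itself lives in $V^{\Q_{\dot x}}$, the relevant uncountable set is simply $A \cap \hat f^{-1}(B)$ computed in $V^{\Q_{\dot x}}$: by Shoenfield absoluteness the relation $\hat f(a) = b$ for $a \in A$, $b \in B$ is absolute (exactly as in the limit Case 2 of Lemma \ref{iteration}), so no further generic data is needed to recover the uncountably many witnesses. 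Thus the whole counterexample descends to the Knaster subforcing, and the contradiction with Lemma \ref{knaster} closes the argument.
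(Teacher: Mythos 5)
Your proposal is correct and follows essentially the same route as the paper's proof: lift $\dot f$ to a Lipschitz $\hat f$ with closed domain, apply Theorem \ref{theoremspecial} to capture this (real-coded) object in a $\sigma$-centered, hence Knaster, subforcing where Lemma \ref{knaster} guarantees avoiding still holds, and then run the Shoenfield absoluteness argument of Case 2 of Lemma \ref{iteration} to pull the entire counterexample $\hat f\hook (A \cap \hat f^{-1}(B))$ down into that subextension. The subtlety you flag, that the uncountable witnessing family and not just the code of the function must descend, is exactly the point the paper handles the same way, by reducing to the situation of Lemma \ref{iteration}, Case 2.
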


\begin{proof}
    Fix $T$ as in the statement of the lemma. Suppose $\dot{f}$ is a $\P_T$-name for a Lipschitz function from an uncountable subset of $A$ in $B$ with uncountable range. Like in the proof of Lemma \ref{iteration}, Case 2, in the generic extension we can lift $\dot{f}$ to a $\hat{f}$ which is Lipschitz and has closed domain. By Theorem \ref{theoremspecial}, there is a $\sigma$-centered subforcing of $\P_T$, say $\Q$ which adds the closed set and the Lipschitz function on it. However in this subextension we have that $A$ Lipschitz avoids $B$ since $\sigma$-centered forcing notions are in particular Knaster and we can therefore apply Lemma \ref{knaster}. We are now though in exactly the same situation as in Case 2 of Lemma \ref{iteration} and the same contradiction occurs. 
\end{proof}

We can now prove Theorem \ref{noimplicationofMA}. It is a straightforward consequence of what we have shown so far.

\begin{proof}[Proof of Theorem \ref{noimplicationofMA}]
    Start in a model of $\CH$ and fix $A$ and $B$ which are $\aleph_1$-dense sets of $\cantor$ so that $A$ Lipschitz avoids $B$. By Lemmas \ref{iteration}, \ref{knaster}, and \ref{PT} it follows that the standard iteration for forcing $\MA ({\rm Knaster}) +$``every Aronszajn tree is special" will force these facts and $A$ will still Lipschitz avoid $B$ hence completing the proof. 
\end{proof}

We note oddly that in the model described in the proof above even though no uncountable subset of $A$ can be Lipschitz injected into $B$ in fact $A$ and $B$ are homeomorphic - and even so by a homeomorphism which lifts to one of all of $\cantor$ by fact that $\mfp > \aleph_1$ implies that $\BA (\cantor)$ holds. 

\section{Open Questions}
We finish the paper by collecting some open questions which have appeared throughout the text. The first of these is whether there are more implications between the four axioms we have been discussing.
\begin{question}
    Are there further implications between $\BA_{\rm Lip} (\baire)$, $\BA_{\rm Lip} (\cantor)$, $\overline{\BA}_{\rm Lip} (\baire)$, and $\overline{\BA}_{\rm Lip} (\cantor)$ than those given in Theorem \ref{implications}? In particular are they all equivalent? Do the weak and strong versions equate? Do the Baire and Cantor versions equate?
\end{question}

We can also ask whether the implications of the stronger Lipschitz axioms follow from the weaker ones.
\begin{question}
    Do either $\BA_{\rm Lip} (\baire)$ or $\BA_{\rm Lip} (\cantor)$ imply $\cc = 2^{\aleph_1}$? Do either of $\BA_{\rm Lip} (\baire)$ or $\BA_{\rm Lip} (\cantor)$ imply $\add(\Null) > \aleph_1$?
\end{question}

Similarly we note that we still have not resolved the analogue of the question asked in \cite{Stepranswatson87} regarding $\mfp$. 

\begin{question}
    Do any of the Lipschitz axioms imply $\mfp > \aleph_1$? 
\end{question}

We would also like to know whether $\MA$ itself, and not just the large fragment considered in the previous section, does not imply the Lipschitz axioms.
\begin{question}
    Does $\MA$ imply $\BA_{\rm Lip}(\cantor)$?
\end{question}
Finally floating in the background is the question of the relation between $\BA$ and the axioms considered in this paper.
\begin{question}
    Are there provable relations between $\BA$ and the Lipschitz variations?
\end{question}

\bibliographystyle{plain}
\bibliography{Lipschitz}
\end{document}